\newtheorem{Def}{Definition}
\newtheorem{Thm}{Theorem}
\newtheorem{Lem}{Lemma}
\newtheorem{Cor}{Corollary}
\newtheorem{Alg}{Algorithm}
\newtheorem{Fact}{Fact}
\newtheorem{Assumption}{Assumption}
\newtheorem{Rem}{Remark}
\DeclareMathOperator*{\argmin}{argmin}
\DeclareMathOperator*{\argmax}{argmax}
\title{On the Convergence Time of Dual Subgradient Methods for Strongly Convex Programs}
\author{Hao Yu and Michael J. Neely\\University of Southern California
\thanks{Hao Yu (email: yuhao@usc.edu) and Michael J. Neely (web: http://www-rcf.usc.edu/$\sim$mjneely) are with the Department of Electrical Engineering, University of Southern California, Los Angeles, CA 90089, USA.}
\thanks{This work was presented in part at IEEE Conference on Decision and Control (CDC), Osaka, Japan, December, 2015 \cite{YuNeely15CDC}. This work is supported in part  by the NSF grant CCF-0747525.}}
\begin{document}
\maketitle

\begin{abstract}
This paper studies the convergence time of dual gradient methods for general (possibly non-differentiable) strongly convex programs. For general convex programs, the convergence time of  dual subgradient/gradient methods with simple running averages (running averages started from iteration $0$) is known to be $O(\frac{1}{\epsilon^{2}})$.  This paper shows that the convergence time for general strongly convex programs is $O(\frac{1}{\epsilon})$. This paper also considers a variation of the average scheme, called the sliding running averages, and shows that if the dual function of the strongly convex program is locally quadratic (Note that the locally quadratic property is implied by the locally strongly concave property.) then the convergence time of the dual gradient method with sliding running averages is $O(\log(\frac{1}{\epsilon}))$.  The convergence time analysis is further verified by numerical experiments.
\end{abstract}

\section{Introduction}
Consider the following strongly convex program:
\begin{align}
\min \quad &f(\mathbf{x}) \label{eq:program-objective}\\
\text{s.t.} \quad  &  g_k(\mathbf{x})\leq 0, \forall k\in\{1,2,\ldots,m\} \label{eq:program-inequality-constraint}\\
			 &  \mathbf{x}\in \mathcal{X} \label{eq:program-set-constraint}
\end{align}
where set $\mathcal{X}\subseteq{R}^n$ is closed and convex; function $f(\mathbf{x})$ is continuous and strongly convex on $\mathcal{X}$ (strong convexity is defined in Section \ref{sec:preliminaries}); functions $g_k(\mathbf{x}),\forall k \in\{1,2,\ldots,m\}$ are Lipschitz continuous and convex on $\mathcal{X}$.  Note that the functions $f(\mathbf{x}), g_1(\mathbf{x}), \ldots, g_m(\mathbf{x})$ are not necessarily differentiable.   It is assumed throughout that problem \eqref{eq:program-objective}-\eqref{eq:program-set-constraint} has an optimal solution.   Strong convexity of $f$ implies the optimum is unique. 

Convex program \eqref{eq:program-objective}-\eqref{eq:program-set-constraint} arises often in control applications such as model predictive control (MPC) \cite{Necoara14TAC}, decentralized multi-agent control \cite{Terelius11IFAC}, and network flow control \cite{Low99TON, Low03TON}. More specifically, the model predictive control problem is to solve problem \eqref{eq:program-objective}-\eqref{eq:program-set-constraint} where $f(\mathbf{x})$ is a quadratic function and each $g_k(\mathbf{x})$ is a linear function \cite{Necoara14TAC}.  In decentralized multi-agent control \cite{Terelius11IFAC}, our goal is to develop distributive algorithms to solve problem \eqref{eq:program-objective}-\eqref{eq:program-set-constraint} where $f(\mathbf{x})$ is the sum utility of individual agents and constraints $g_k(\mathbf{x})\leq 0$ capture the communication or resource allocation constraints among individual agents.  The network flow control and ``TCP" protocols in computer networks can be interpreted as the dual subgradient algorithm for solving
a problem of the form \eqref{eq:program-objective}-\eqref{eq:program-set-constraint} \cite{Low99TON, Low03TON}.  In particular, Section \ref{sec:num} shows that the dual subgradient method based
\emph{online flow control} rapidly converges to optimality when utilities are strongly convex.

\subsection{The $\epsilon$-Approximate Solution}
\begin{Def}
Let $\mathbf{x}^\ast$ be the optimal solution to problem  \eqref{eq:program-objective}-\eqref{eq:program-set-constraint}. For any $\epsilon>0$, a point $\mathbf{x}^\epsilon \in \mathcal{X}$ is said to be an {\it $\epsilon$-approximate solution} if $f(\mathbf{x}^{\epsilon}) \leq f(\mathbf{x}^{\ast}) + \epsilon$ and $g_{k}(\mathbf{x}^{\epsilon}) \leq \epsilon, \forall k\in \{1,\ldots, m\}$.
\end{Def}

\begin{Def}
Let $\mathbf{x}(t), t\in \{1,2,\ldots\}$ be the solution sequence yielded by an iterative algorithm. The {\it convergence time} (to an $\epsilon$ approximate solution) is the number of iterations required to achieve an $\epsilon$ approximate solution. That is, this algorithm is said to have  convergence time $O(h(\epsilon))$ if $\{\mathbf{x}(t), t\geq O(h(\epsilon))\}$ is a sequence of $\epsilon$-approximate solutions.
\end{Def}
Note if $\mathbf{x}(t)$ satisfies $f(\mathbf{x}(t)) \leq f(\mathbf{x}^{\ast}) +\frac{1}{t}$ and $g_{k}(\mathbf{x}(t)) \leq \frac{1}{t}, \forall k\in \{1,\ldots, m\}$ for all $t\geq 1$, then error decays with time like $O(\frac{1}{t})$ and the convergence time  is $O(\frac{1}{\epsilon})$.

\subsection{The Dual Subgradient/Gradient Method}
The dual subgradient method is a conventional method to solve \eqref{eq:program-objective}-\eqref{eq:program-set-constraint} \cite{book_NonlinearProgrammingTA, book_NonlinearProgramming_Bertsekas}.  It is an iterative algorithm that, every iteration, removes the inequality constraints \eqref{eq:program-inequality-constraint} and chooses primal variables to minimize a 
function over the set $\mathcal{X}$.  This can be decomposed into parallel smaller problems if the objective and constraint functions are separable. The dual subgradient method can be interpreted as a subgradient/gradient method applied to the Lagrangian dual function of convex program  \eqref{eq:program-objective}-\eqref{eq:program-set-constraint} and allows for many different step size rules \cite{book_NonlinearProgramming_Bertsekas}.  This paper focuses on  
 a constant step size due to its simplicity for practical implementations. Note that by Danskin's theorem (Proposition B.25(a) in \cite{book_NonlinearProgramming_Bertsekas}), the Lagrangian dual function of a strongly convex program is differentiable, thus the dual subgradient method for strongly convex program \eqref{eq:program-objective}-\eqref{eq:program-set-constraint} is in fact a dual gradient method.  The constant step size dual subgradient/gradient method solves problem \eqref{eq:program-objective}-\eqref{eq:program-set-constraint} as follows:

\begin{Alg}\label{alg:dual-subgradient} [The Dual Subgradient/Gradient Method] Let $c >0$ be a constant step size.  Let $\boldsymbol{\lambda}(0) \geq \mathbf{0},$ be a given constant vector. At each iteration $t$, update $\mathbf{x}(t)$ and $\boldsymbol{\lambda}(t+1)$ as follows:
\begin{itemize}
\item  $\mathbf{x}(t) = \argmin\limits_{\mathbf{x}\in \mathcal{X}} \left[ f(\mathbf{x}) + \sum_{k=1}^m \lambda_k(t) g_k(\mathbf{x})\right].$
\item  $\lambda_k(t+1) = \max\left\{\lambda_k(t) + c g_k(\mathbf{x}(t)), 0\right\}, \forall k.$
\end{itemize} 
\end{Alg}

Rather than using $\mathbf{x}(t)$ from Algorithm \ref{alg:dual-subgradient} as the primal solutions, the following running average schemes are considered:
\begin{enumerate}
\item {\bf Simple Running Averages:} Use $\overline{\mathbf{x}}(t) =\frac{1}{t}\sum_{\tau =0}^{t-1} \mathbf{x}(\tau)$ as the solution at each iteration $t\in\{1,2,\ldots\}$.
\item {\bf Sliding Running Averages:} Use  $\widetilde{\mathbf{x}}(t) = \mathbf{x}(0)$ and
\[
\widetilde{\mathbf{x}}(t) = \left \{ \begin{array}{lc} \frac{1}{\frac{t}{2}}\sum_{\tau = \frac{t}{2}}^{t-1} \mathbf{x}(\tau) &~\text{if}~ t ~\text{is even} \\ 
\widetilde{\mathbf{x}}(t-1) &~\text{if}~t~\text{is odd} \end{array}\right.
\]
as the solution at each iteration $t\in\{1,2,\ldots\}$.
\end{enumerate}
The simple running average sequence $\overline{\mathbf{x}}(t)$ is also called the ergodic sequence in \cite{Larsson99_MathematicalProgramming}. The idea of using the running average $\overline{\mathbf{x}}(t)$ as the solutions, rather than the original primal variables $\mathbf{x}(t)$, dates back to Shor \cite{book_MinimizeNonDifferentiableFun_Shor} and has been further developed in \cite{Sherali96ORL} and \cite{Larsson99_MathematicalProgramming}. The constant step size dual subgradient algorithm with simple running averages is also a special case of the drift-plus-penalty algorithm, which was originally developed to solve more general stochastic optimization \cite{book_Neely10} and used for deterministic convex programs in \cite{Neely05DCDIS}. (See Section I.C in \cite{YuNeely15CDC} for more discussions.) This paper proposes a new running average scheme, called sliding running averages. This paper shows that the sliding running averages can have better convergence time when the dual function of the convex program satisfies additional assumptions. 
\subsection{Related Work}
A lot of literature focuses on the convergence time of dual subgradient methods to an $\epsilon$-approximiate solution.  For general convex programs in the form of \eqref{eq:program-objective}-\eqref{eq:program-set-constraint}, where the objective function$f(\mathbf{x})$ is convex but not necessarily strongly convex, the convergence time of the drift-plus-penalty algorithm is shown to be $O(\frac{1}{\epsilon^2})$ in \cite{Neely05DCDIS,Neely14Arxiv_ConvergenceTime}. A similar $O(\frac{1}{\epsilon^2})$ convergence time of the dual subgradient algorithm with the averaged primal sequence is shown in \cite{Nedic09}. A recent work \cite{Sucha14CDC} shows that the convergence time of the drift-plus-penalty algorithm is $O(\frac{1}{\epsilon})$ if the dual function is locally polyhedral and the convergence time is $O(\frac{1}{\epsilon^{3/2}})$ if the dual function is locally quadratic. For a special class of strongly convex programs in the form of \eqref{eq:program-objective}-\eqref{eq:program-set-constraint}, where $f(\mathbf{x})$ is second-order differentiable and strongly convex and $g_k(\mathbf{x}), \forall k\in\{1,2,\ldots,m\}$ are second-order differentiable and have bounded Jacobians, the convergence time of the dual subgradient algorithm is shown to be $O(\frac{1}{\epsilon})$ in \cite{Necoara14TAC}. 

Note that convex program \eqref{eq:program-objective}-\eqref{eq:program-set-constraint} with second order differentiable $f(\mathbf{x})$ and $g_k(\mathbf{x}), k\in\{1,2,\ldots,m\}$ in general can be solved via interior point methods with linear convergence time. However, to achieve good convergence time in practice, the barrier parameters must be scaled carefully and the computation complexity associated with each iteration can be high. In contrast, the dual subgradient algorithm is a Lagrangian dual method and can yield distributed implementations with low computation complexity when the objective and constraint functions are separable.

This paper considers a class of strongly convex programs that is more general than those treated in \cite{Necoara14TAC}.\footnote{Note that bounded Jacobians imply Lipschitz continuity.  
Work \cite{Necoara14TAC} also considers the effect of inaccurate solutions for the primal updates. The analysis in this paper can also deal with inaccurate  updates. In this case, there will be an error term $\delta$ on the right of \eqref{eq:drift-plus-penalty-bound}.} Besides the strong convexity of $f(\mathbf{x})$, we only require the constraint functions $g_k(\mathbf{x})$ to be Lipschitz continuous. The functions  $f(\mathbf{x})$ and $g_k(\mathbf{x})$ can even be non-differentiable. Thus,  this paper can deal with non-smooth optimization. For example, the $l_1$ norm $\Vert \mathbf{x} \Vert_1$ is non-differentiable and often appears as part of the objective or constraint functions in machine learning, compressed sensing and image processing applications. This paper shows that the convergence time of the dual subgradient method with simple running averages for general strongly convex programs is $O(\frac{1}{\epsilon})$ and the convergence time can be improved to $O(\log(\frac{1}{\epsilon}))$ by using sliding running averages when the dual function is locally quadratic.

A closely related recent work is \cite{Necoara16OMS} that considers strongly convex programs with strongly convex and second order differentiable objective functions $f(\mathbf{x})$ and conic constraints in the form of $\mathbf{G}\mathbf{x} + h\in \mathcal{K}$, where $\mathcal{K}$ is a proper cone.  
The authors in \cite{Necoara16OMS} show that a hybrid algorithm using both dual subgradient and dual fast gradient methods can have convergence time  $O(\frac{1}{\epsilon^{2/3}})$; and the dual subgradient method can have convergence time $O(\log(\frac{1}{\epsilon}))$ if the strongly convex program satisfies an error bound property.  Results in the current paper are developed independently and consider general nonlinear convex
constraint functions; and show that the dual subgradient/gradient method with a different averaging scheme has an $O(\log(\frac{1}{\epsilon}))$ convergence time when the dual function is locally quadratic.  Another independent parallel work is \cite{Necoara15Springer} that considers strongly convex programs with strongly convex and smooth objective functions $f(\mathbf{x})$ and general constraint functions $\mathbf{g}(\mathbf{x})$ with bounded Jacobians. The authors in \cite{Necoara15Springer} shows that the dual subgradient/gradient method with simple running averages has $O(\frac{1}{\epsilon})$ convergence.

This paper and independent parallel works \cite{Necoara16OMS,Necoara15Springer} obtain similar convergence times of the dual subgradient/gradient method with different averaging schemes for strongly convex programs under slightly different assumptions. However, the proof technique in this paper is fundamentally different from that used in \cite{Necoara16OMS} and \cite{Necoara15Springer}. Works \cite{Necoara16OMS, Necoara15Springer} and other previous works, e.g., \cite{Necoara14TAC}, follow the classical optimization analysis approach based on the descent lemma, while this paper is based on the drift-plus-penalty analysis that was originally developed for stochastic optimization in dynamic queuing systems \cite{PhD_Thesis_Neely,book_Neely10}. Using the drift-plus-penalty technique, we further propose a new Lagrangian dual type algorithm with $O(\frac{1}{\epsilon})$ convergence for general convex programs (possibly without strong convexity) in a following work \cite{YuNeely17SIOPT}.

\subsection{Notations}
Denote the $i$-th element of vector $\mathbf{x}\in \mathbb{R}^n$ as $x_i$. Denote $\mathbb{R}^{n}_{+} = \big\{\mathbf{x}\in \mathbb{R}^{n}: x_{i}\geq 0, \forall i\in\{1,\ldots,n\}\big\}$. For all $ \mathbf{x}, \mathbf{y} \in \mathbb{R}^{n}$, we write $\mathbf{x} \geq \mathbf{y}$ if and only if $x_{i} \geq y_{i}, \forall 1\leq i\leq n$. Denote the Euclidean norm of vector $\mathbf{x} = [x_1,x_2,\ldots, x_n]^T \in \mathbb{R}^n$ as $\Vert \mathbf{x}\Vert = \sqrt{\sum_{i=1}^n x_i^2}$.   For real numbers $a, b\in \mathbb{R}$ such that $a\leq b$, the projection operator onto the interval $[a,b]$ is denoted as $$[x]_{a}^{b} = \left\{\begin{array}{cl} b & \text{if}~x> b \\ x &\text{if}~a\leq x\leq b \\ a &\text{if}~x<a\end{array}\right..$$ For a convex function $f(\mathbf{x})$, the set of all subgradients of $f$ at point $\mathbf{x} = \mathbf{x}^\ast$ is denoted as $\partial f(\mathbf{x}^\ast)$; the gradient of $f$ at $\mathbf{x} = \mathbf{x}^{\ast}$ is denoted as $\nabla_{\mathbf{x}} f(\mathbf{x}^{\ast})$ if $f$ is differentiable; and the Hessian of $f$ at $\mathbf{x}=\mathbf{x}^{\ast}$ is denoted as $\nabla_{\mathbf{x}}^{2} f(\mathbf{x}^{\ast})$ if $f$ is second order differentiable. For all $x\in \mathbb{R}$, the largest integer less than or equal to $x$ is denoted as $\lfloor x\rfloor$ and the smallest integer larger than or equal to $x$ is denoted as $\lceil x \rceil$. If a square matrix $\mathbf{A}$ is positive definite, we write $A\succ 0$; and if $A$ is negative definite, we write $A\prec 0$. The $n\times n$ identity matrix is denoted as $\mathbf{I}_{n}$.

\section{Preliminaries and Basic Analysis}
This section presents useful preliminaries in convex analysis and important facts of the dual subgradient/gradient method.

\subsection{Preliminaries}\label{sec:preliminaries}
\begin{Def}[Lipschitz Continuity] \label{def:Lipschitz-continuous}
Let $\mathcal{X} \subseteq \mathbb{R}^n$ be a convex set. Function $h: \mathcal{X}\rightarrow \mathbb{R}^m$ is said to be Lipschitz continuous  on $\mathcal{X}$ with modulus $L$ if there exists $L> 0$ such that $\Vert h(\mathbf{y}) - h(\mathbf{x}) \Vert \leq L \Vert\mathbf{y} - \mathbf{x}\Vert$  for all $ \mathbf{x}, \mathbf{y} \in \mathcal{X}$. 
\end{Def}

\begin{Def}[Strongly Convex Functions]
Let $\mathcal{X} \subseteq \mathbb{R}^n$ be a convex set. Function $h$ is said to be strongly convex on $\mathcal{X}$ with modulus $\alpha$ if there exists a constant $\alpha>0$ such that $h(\mathbf{x}) - \frac{1}{2} \alpha \Vert \mathbf{x} \Vert^2$ is convex on $\mathcal{X}$.
\end{Def}

\begin{Lem}[Theorem 6.1.2 in \cite{book_FundamentalConvexAnalysis}] \label{lm:strong-convex}
Let $f(\mathbf{x})$ be strongly convex on $\mathcal{X}$ with modulus $\alpha$. Then $ f(\mathbf{y}) \geq  f(\mathbf{x}) +\mathbf{d}^T (\mathbf{y} - \mathbf{x}) + \frac{\alpha}{2}\Vert \mathbf{y} - \mathbf{x} \Vert^2$ for all $\mathbf{x}, \mathbf{y}\in \mathcal{X}$ and all $\mathbf{d}\in \partial f(\mathbf{x})$.
\end{Lem}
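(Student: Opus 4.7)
The plan is to reduce the claim to the standard subgradient inequality for convex functions by subtracting off the quadratic that witnesses strong convexity. By the definition of strong convexity, the function $h(\mathbf{x}) := f(\mathbf{x}) - \frac{\alpha}{2}\Vert\mathbf{x}\Vert^{2}$ is convex on $\mathcal{X}$. The first step is to connect $\partial f$ and $\partial h$: since $\frac{\alpha}{2}\Vert\mathbf{x}\Vert^{2}$ is convex and differentiable everywhere with gradient $\alpha\mathbf{x}$, the Moreau--Rockafellar sum rule gives
\begin{equation*}
\partial f(\mathbf{y}) = \partial h(\mathbf{y}) + \alpha\mathbf{y}.
\end{equation*}
In particular, for any $\mathbf{d}\in\partial f(\mathbf{y})$ the vector $\mathbf{e}:=\mathbf{d}-\alpha\mathbf{y}$ is a subgradient of the convex function $h$ at $\mathbf{y}$.

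Next, I would apply the standard subgradient inequality for the convex function $h$, which states $h(\mathbf{x}) \geq h(\mathbf{y}) + \mathbf{e}^{T}(\mathbf{x}-\mathbf{y})$, or equivalently
\begin{equation*}
h(\mathbf{y}) - h(\mathbf{x}) \leq \mathbf{e}^{T}(\mathbf{y}-\mathbf{x}) = (\mathbf{d}-\alpha\mathbf{y})^{T}(\mathbf{y}-\mathbf{x}).
\end{equation*}
Substituting $h(\mathbf{z}) = f(\mathbf{z}) - \frac{\alpha}{2}\Vert\mathbf{z}\Vert^{2}$ into the left-hand side gives
\begin{equation*}
f(\mathbf{y}) - f(\mathbf{x}) \leq \mathbf{d}^{T}(\mathbf{y}-\mathbf{x}) - \alpha\mathbf{y}^{T}(\mathbf{y}-\mathbf{x}) + \frac{\alpha}{2}\bigl(\Vert\mathbf{y}\Vert^{2} - \Vert\mathbf{x}\Vert^{2}\bigr).
\end{equation*}

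The final step is the routine algebraic identity $-\alpha\mathbf{y}^{T}(\mathbf{y}-\mathbf{x}) + \frac{\alpha}{2}(\Vert\mathbf{y}\Vert^{2}-\Vert\mathbf{x}\Vert^{2}) = -\frac{\alpha}{2}\Vert\mathbf{y}-\mathbf{x}\Vert^{2}$, which follows by expanding $\Vert\mathbf{y}-\mathbf{x}\Vert^{2} = \Vert\mathbf{y}\Vert^{2} - 2\mathbf{y}^{T}\mathbf{x} + \Vert\mathbf{x}\Vert^{2}$. Plugging this in yields exactly the claimed inequality.

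There is essentially no hard step here; the only subtlety is justifying the subgradient sum rule on $\mathcal{X}$, but because the quadratic $\frac{\alpha}{2}\Vert\mathbf{x}\Vert^{2}$ is finite and differentiable on all of $\mathbb{R}^{n}$ (hence certainly has a nonempty relative interior overlap with $\mathrm{dom}\,h$), this sum rule holds without any qualification beyond convexity of $h$, which is precisely the hypothesis.
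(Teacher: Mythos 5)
Your proof is correct: the reduction to the convex function $h(\mathbf{x})=f(\mathbf{x})-\tfrac{\alpha}{2}\Vert\mathbf{x}\Vert^{2}$, the identification $\partial f(\mathbf{y})=\partial h(\mathbf{y})+\alpha\mathbf{y}$ (valid here since the quadratic is finite, convex, and differentiable on all of $\mathbb{R}^{n}$, so the Moreau--Rockafellar qualification is automatic), and the closing algebra all check out. The paper offers no proof of its own --- it cites Theorem 6.1.2 of \cite{book_FundamentalConvexAnalysis} --- and your argument is precisely the standard derivation given there, so there is nothing further to compare.
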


\begin{Lem}[Proposition B.24 (f) in \cite{book_NonlinearProgramming_Bertsekas}] \label{lm:first-order-optimality}
Let $\mathcal{X}\subseteq \mathbb{R}^n$ be a convex set. Let function $h$ be convex on $\mathcal{X}$ and and $\mathbf{x}^{opt}$ is a global minimum of $h$ on $\mathcal{X}$.  Then there exists $\mathbf{d} \in \partial f(\mathbf{x}^{opt})$ such that $\mathbf{d}^T(\mathbf{y} - \mathbf{x}^{opt} )\geq 0$ for all $\mathbf{y}\in \mathcal{X}$. 
\end{Lem}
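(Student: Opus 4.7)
The plan is to reduce the constrained optimality condition to an unconstrained one by absorbing the feasible set $\mathcal{X}$ into the objective via an indicator function, and then apply the Moreau--Rockafellar subdifferential sum rule. Concretely, I would introduce $\chi_{\mathcal{X}}(\mathbf{x})$ equal to $0$ for $\mathbf{x}\in \mathcal{X}$ and $+\infty$ otherwise. Since $\mathbf{x}^\ast$ minimizes $f$ over $\mathcal{X}$, it minimizes the extended-valued convex function $F(\mathbf{x}) := f(\mathbf{x}) + \chi_{\mathcal{X}}(\mathbf{x})$ globally on $\mathbb{R}^n$. Fermat's rule then yields $\mathbf{0}\in \partial F(\mathbf{x}^\ast)$.

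The second step is to split this subdifferential. Invoking the sum rule gives
\[
\partial F(\mathbf{x}^\ast) \;=\; \partial f(\mathbf{x}^\ast) + \partial \chi_{\mathcal{X}}(\mathbf{x}^\ast) \;=\; \partial f(\mathbf{x}^\ast) + N_{\mathcal{X}}(\mathbf{x}^\ast),
\]
where $N_{\mathcal{X}}(\mathbf{x}^\ast) = \{\mathbf{v}\in \mathbb{R}^n : \mathbf{v}^T(\mathbf{x}-\mathbf{x}^\ast)\leq 0,\ \forall \mathbf{x}\in \mathcal{X}\}$ is the normal cone to $\mathcal{X}$ at $\mathbf{x}^\ast$; the identity $\partial \chi_{\mathcal{X}}(\mathbf{x}^\ast) = N_{\mathcal{X}}(\mathbf{x}^\ast)$ follows by plugging $\chi_{\mathcal{X}}$ into the definition of subgradient. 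Combining with Fermat's rule, there exist $\mathbf{d}\in \partial f(\mathbf{x}^\ast)$ and $\mathbf{v}\in N_{\mathcal{X}}(\mathbf{x}^\ast)$ with $\mathbf{d}+\mathbf{v} = \mathbf{0}$. Substituting $\mathbf{v} = -\mathbf{d}$ into the defining inequality of $N_{\mathcal{X}}(\mathbf{x}^\ast)$ gives $\mathbf{d}^T(\mathbf{x}-\mathbf{x}^\ast) \geq 0$ for every $\mathbf{x}\in \mathcal{X}$, which is exactly the claim.

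The main obstacle is justifying the subdifferential sum rule, since one of the functions involved is the extended-valued $\chi_{\mathcal{X}}$. The Moreau--Rockafellar theorem requires a qualification such as $\mathrm{ri}(\mathrm{dom}\,f)\cap \mathrm{ri}(\mathrm{dom}\,\chi_{\mathcal{X}})\neq \emptyset$; in our setting $f$ is convex and finite-valued on a convex neighborhood of $\mathcal{X}$, so $\mathrm{dom}\,f$ contains $\mathcal{X}$ and the qualification holds automatically, and one can further use that a convex function is continuous on the interior of its domain. If one wishes to avoid the sum rule entirely, an equivalent route is to use the support-function characterization $f'(\mathbf{x}^\ast;\mathbf{h}) = \max_{\mathbf{d}\in \partial f(\mathbf{x}^\ast)} \mathbf{d}^T\mathbf{h}$, note that optimality gives $f'(\mathbf{x}^\ast;\mathbf{y}-\mathbf{x}^\ast)\geq 0$ for every $\mathbf{y}\in \mathcal{X}$, and then apply a separating-hyperplane argument between the nonempty compact convex set $\partial f(\mathbf{x}^\ast)$ and the closed convex cone $-N_{\mathcal{X}}(\mathbf{x}^\ast)$ to extract a single $\mathbf{d}$ valid for all directions. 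Either way, the nontrivial content is the convex subdifferential calculus; the rest is bookkeeping.
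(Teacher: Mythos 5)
The paper does not actually prove this lemma: it is imported verbatim as Proposition B.24(f) of Bertsekas's \emph{Nonlinear Programming}, so there is no in-paper argument to compare against. Your proof is correct and is the standard derivation: Fermat's rule for $f+\chi_{\mathcal{X}}$, the Moreau--Rockafellar sum rule, and $\partial\chi_{\mathcal{X}}(\mathbf{x}^\ast)=N_{\mathcal{X}}(\mathbf{x}^\ast)$ together give $\mathbf{0}=\mathbf{d}+\mathbf{v}$ with $\mathbf{d}\in\partial f(\mathbf{x}^\ast)$ and $-\mathbf{d}\in N_{\mathcal{X}}(\mathbf{x}^\ast)$, which is the claim; your fallback via $f'(\mathbf{x}^\ast;\cdot)$ and separation of $\partial f(\mathbf{x}^\ast)$ from $-N_{\mathcal{X}}(\mathbf{x}^\ast)$ is an equally valid route and is closer in spirit to how such statements are proved in Bertsekas. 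One small imprecision worth fixing: the inclusion $\mathrm{dom}\,f\supseteq\mathcal{X}$ by itself does \emph{not} guarantee $\mathrm{ri}(\mathrm{dom}\,f)\cap\mathrm{ri}(\mathcal{X})\neq\emptyset$ (e.g.\ $\mathrm{dom}\,f$ a closed half-plane in $\mathbb{R}^2$ and $\mathcal{X}$ a segment of its boundary line), so the qualification is not ``automatic'' from containment alone; what makes it automatic here is the stronger fact you also state, namely that in the lemma's intended reading (and in every use the paper makes of it) $f$ is real-valued on all of $\mathbb{R}^n$, or at least on an open set containing $\mathcal{X}$, so that $\mathrm{ri}(\mathcal{X})\subseteq\mathrm{int}(\mathrm{dom}\,f)$. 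With that clarification the argument is complete.
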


Combining Lemma \ref{lm:strong-convex} and Lemma \ref{lm:first-order-optimality} gives the following:
\begin{Cor}\label{cor:strong-convex-optimum}
Let $h(\mathbf{x})$ be strongly convex on convex set $\mathcal{X}$ with modulus $\alpha$. If $\mathbf{x}^{opt}$ is a global minimum, then $h(\mathbf{x}^{opt}) \leq h(\mathbf{y}) - \frac{\alpha}{2}\Vert \mathbf{y}-\mathbf{x}^{opt}\Vert^2, \forall \mathbf{y}\in \mathcal{X}$.
\end{Cor}
\begin{IEEEproof}
Fix $\mathbf{y}\in \mathcal{X}$. By Lemma \ref{lm:first-order-optimality}, there exists $\mathbf{d} \in \partial h(\mathbf{x}^{opt})$ such that $\mathbf{d}^T(\mathbf{y} - \mathbf{x}^{opt}) \geq 0$. By Lemma \ref{lm:strong-convex}, we also have
\begin{align*}
h(\mathbf{y}) &\geq h(\mathbf{x}^{opt}) +  \mathbf{d}^{T} (\mathbf{y} - \mathbf{x}^{opt}) + \frac{\alpha}{2} \Vert  \mathbf{y} -\mathbf{x}^{opt} \Vert^{2}\\
&\overset{(a)}{\geq} h(\mathbf{x}^{opt}) + \frac{\alpha}{2} \Vert  \mathbf{y} -\mathbf{x}^{opt} \Vert^{2},
\end{align*}
where $(a)$ follows from the fact that $\mathbf{d}^T(\mathbf{y} - \mathbf{x}^{opt}) \geq 0$.
\end{IEEEproof}

\subsection{Assumptions}

Denote the stacked vector of multiple functions $g_1(\mathbf{x}), g_2(\mathbf{x}), \ldots, g_m(\mathbf{x})$ as 
$$\mathbf{g}(\mathbf{x}) = \big[g_1(\mathbf{x}), g_2(\mathbf{x}), \ldots, g_m(\mathbf{x})\big]^T.$$

Throughout this paper, the following assumptions are imposed on convex program \eqref{eq:program-objective}-\eqref{eq:program-set-constraint}.

\begin{Assumption}\label{as:strongly-convex-problem} In convex program \eqref{eq:program-objective}-\eqref{eq:program-set-constraint}, function $f(\mathbf{x})$ is strongly convex on $\mathcal{X}$ with modulus $\alpha$; and function $\mathbf{g}(\mathbf{x})$ is Lipschitz continuous on $\mathcal{X}$ with modulus $\beta$. 
\end{Assumption}
\begin{Assumption}[Existence of Lagrange Multipliers] \label{as:strong-duality}
There exists a Lagrange multiplier vector $\boldsymbol{\lambda}^\ast = [\lambda_1^\ast, \lambda_2^\ast, \ldots, \lambda_m^\ast] \geq \mathbf{0}$ attaining the strong duality for convex program  \eqref{eq:program-objective}-\eqref{eq:program-set-constraint}. That is, 
\begin{align*}
q(\boldsymbol{\lambda}^\ast) = \min\limits_{\mathbf{x}\in \mathcal{X}}\left\{f(\mathbf{x}) : g_k(\mathbf{x})\leq 0, \forall k\in\{1,2,\ldots,m\}\right\} 
\end{align*}
where $q(\boldsymbol{\lambda}) = \min\limits_{\mathbf{x}\in \mathcal{X}}\{f(\mathbf{x})+ \sum_{k=1}^m \lambda_k g_k(\mathbf{x})\}
$ is the {\it Lagrangian dual function} of problem \eqref{eq:program-objective}-\eqref{eq:program-set-constraint}.\end{Assumption}

\subsection{Properties of the Drift}

Denote $\boldsymbol{\lambda}(t) = \big[ \lambda_1(t), \ldots, \lambda_m(t)\big]^T$.  Define {Lyapunov function} $L(t) = \frac{1}{2} \Vert \boldsymbol{\lambda}(t)\Vert^2 $ and {drift} $\Delta (t) = L(t+1) - L(t)$.
\begin{Lem}\label{lm:drift}
At each iteration $t$ in Algorithm \ref{alg:dual-subgradient}, 
\begin{align}
\frac{1}{c}\Delta(t) = \boldsymbol{\lambda}^T(t + 1) \mathbf{g}(\mathbf{x}(t))  - \frac{1}{2c} \Vert\boldsymbol{\lambda}(t+1) - \boldsymbol{\lambda}(t)\Vert^{2}  \label{eq:drift}
\end{align}
\end{Lem}

\begin{IEEEproof}
The update equations $\lambda_k(t+1) = \max\{\lambda_k(t) + cg_k(\mathbf{x}(t)), 0\}, \forall k\in \{1,2,\ldots,m\}$ can be rewritten as
\begin{align}
\lambda_k(t+1) = \lambda_k(t) + c\tilde{g}_k(\mathbf{x}(t)), \forall k\in \{1,2,\ldots,m\}, \label{eq:modified-virtual-queue}
\end{align} 
where $\tilde{g}_k(\mathbf{x}(t)) = \left \{ \begin{array}{cl}  g_k(\mathbf{x}(t)), & \text{if}~\lambda_k(t) + cg_k(\mathbf{x}(t)) \geq 0\\
-\frac{1}{c}\lambda_k(t),  & \text{else} \end{array} \right.$, $\forall k\in \{1,2,\ldots, m\}$.

Fix $k\in\{1,2,\ldots, m\}$. Squaring both sides of \eqref{eq:modified-virtual-queue} and dividing by factor $2$ yields: 
\begin{align*}
&\frac{1}{2}[\lambda_k (t+1) ]^2 \\
= &\frac{1}{2}[\lambda_k(t)]^2 + \frac{c^2}{2} [\tilde{g}_k(\mathbf{x}(t))]^2 + c \lambda_k (t) \tilde{g}_k(\mathbf{x}(t)) \\
=& \frac{1}{2}[\lambda_k(t)]^2 + \frac{c^2}{2} [\tilde{g}_k(\mathbf{x}(t))]^2 +  c \lambda_k (t) g_k(\mathbf{x}(t)) +  c\lambda_k (t)[ \tilde{g}_k(\mathbf{x}(t)) -g_k(\mathbf{x}(t)) ]  \\
\overset{(a)}{=}& \frac{1}{2}[\lambda_k(t)]^2 + \frac{c^2}{2} [\tilde{g}_k(\mathbf{x}(t))]^2 + c \lambda_k (t) g_k(\mathbf{x}(t)) - c^2 \tilde{g}_k (\mathbf{x}(t)) [ \tilde{g}_k(\mathbf{x}(t)) -g_k(\mathbf{x}(t)) ]\\
=& \frac{1}{2}[\lambda_k(t)]^2 - \frac{c^2}{2} [\tilde{g}_k(\mathbf{x}(t))]^2 + c [\lambda_k (t)+ c\tilde{g}_k (\mathbf{x}(t)) ] g_k(\mathbf{x}(t)) \\
\overset{(b)}{=}& \frac{1}{2}[\lambda_k(t)]^2 - \frac{1}{2} [\lambda_k(t+1) - \lambda_k(t)]^2 + c \lambda_k (t+1) g_k(\mathbf{x}(t))
\end{align*}
where $(a)$ follows from $\lambda_k (t) [ \tilde{g}_k (\mathbf{x}(t)) -g_k(\mathbf{x}(t)) ] = -c \tilde{g}_k (\mathbf{x}(t)) [\tilde{g}_k (\mathbf{x}(t)) -g_k(\mathbf{x}(t)) ]$, which can be shown by considering $\tilde{g}_k (\mathbf{x}(t)) = g_k(\mathbf{x}(t))$ and $\tilde{g}_k (\mathbf{x}(t)) \neq g_k(\mathbf{x}(t))$, separately; and $(b)$ follows from the fact that $\lambda_k(t+1) = \lambda_k(t) + c\tilde{g}_k (\mathbf{x}(t))$.
Summing over $k\in\{1,2,\ldots,m\}$ yields $\frac{1}{2}\Vert\boldsymbol{\lambda}(t+1)\Vert^{2} = \frac{1}{2}\Vert\boldsymbol{\lambda}(t)\Vert^{2} - \frac{1}{2} c^2 \Vert\boldsymbol{\lambda}(t+1) - \boldsymbol{\lambda}(t)\Vert^{2} + c \boldsymbol{\lambda}^T(t + 1) \mathbf{g}(\mathbf{x}(t))$. Rearranging the terms and dividing both sides by factor $c$ yields the result.

\end{IEEEproof}

\section{Convergence Time Analysis} \label{sec:general-strongly-convex}
This section analyzes the convergence time of Algorithm \ref{alg:dual-subgradient} for strongly convex program \eqref{eq:program-objective}-\eqref{eq:program-set-constraint}.

\subsection{Objective Value Violations}

\begin{Lem}\label{lm:drift-plus-penalty}
Let $\mathbf{x}^\ast \in \mathcal{X}$ be the optimal solution to problem \eqref{eq:program-objective}-\eqref{eq:program-set-constraint}. At each iteration $t$ in Algorithm \ref{alg:dual-subgradient}, we have
\begin{align}
\frac{1}{c}\Delta(t) + f(\mathbf{x}(t)) \leq  f (\mathbf{x}^\ast) , \forall t\geq 0, \label{eq:drift-plus-penalty-bound}
\end{align}

\end{Lem}

\begin{IEEEproof}
Fix $t\geq0$. Since $f(\mathbf{x})$ is strongly convex with modulus $\alpha$; $g_k(\mathbf{x}), \forall k\in\{1,2,\ldots,m\}$ are convex; and $\lambda_k(t), \forall k\in\{1,2,\ldots,m\}$ are non-negative at each iteration $t$, the function $f(\mathbf{x}) + \sum_{k=1}^m \lambda_k(t) g_k(\mathbf{x})$ is also strongly convex with modulus $\alpha$ at each iteration $t$. Note that $\mathbf{x}(t) = \argmin\limits_{\mathbf{x}\in \mathcal{X}} \Big[f(\mathbf{x}) + \sum_{k=1}^m \lambda_k(t) g_k(\mathbf{x})\Big]$.  By Corollary \ref{cor:strong-convex-optimum} with $\mathbf{x}^{opt} = \mathbf{x}(t)$ and  $\mathbf{y} = \mathbf{x}^\ast$, we have 
\begin{align*}
\big[ f(\mathbf{x}(t)) + \sum_{k=1}^m \lambda_k (t) g_k (\mathbf{x}(t))\big] \leq \big[ f(\mathbf{x}^\ast) +  \sum_{k=1}^m \lambda_k (t) g_k (\mathbf{x}^\ast)\big] - \frac{\alpha}{2} \Vert \mathbf{x}(t) - \mathbf{x}^\ast \Vert^2.
\end{align*}

Hence,  $f(\mathbf{x}(t))  \leq  f(\mathbf{x}^\ast) +  \mathbf{\lambda}^T(t) \big[ \mathbf{g}(\mathbf{x}^\ast) -\mathbf{g}(\mathbf{x}(t)) \big] -\frac{\alpha}{2} \Vert \mathbf{x}(t) - \mathbf{x}^\ast \Vert^2$. Adding this inequality to equation \eqref{eq:drift} yields 
\begin{align*}
&\frac{1}{c}\Delta(t) + f(\mathbf{x}(t)) \\
\leq & f (\mathbf{x}^\ast) -\frac{1}{2c} \Vert \boldsymbol{\lambda}(t+1) - \boldsymbol{\lambda}(t) \Vert^{2} - \frac{\alpha}{2} \Vert \mathbf{x}(t) - \mathbf{x}^\ast \Vert^2 + \boldsymbol{\lambda}^{T}(t) [ \mathbf{g}(\mathbf{x}^\ast) - \mathbf{g} (\mathbf{x}(t))] +\boldsymbol{\lambda}^{T}(t+1)\mathbf{g}(\mathbf{x}(t)).
\end{align*} 

Define 
\begin{align*}
B(t) =  -\frac{1}{2c} \Vert \boldsymbol{\lambda}(t+1) - \boldsymbol{\lambda}(t) \Vert^{2} - \frac{\alpha}{2} \Vert \mathbf{x}(t) - \mathbf{x}^\ast \Vert^2 + \boldsymbol{\lambda}^{T}(t) [ \mathbf{g}(\mathbf{x}^\ast) - \mathbf{g} (\mathbf{x}(t))] +\boldsymbol{\lambda}^{T}(t+1)\mathbf{g}(\mathbf{x}(t)).
\end{align*}
Next, we need to show that $B(t) \leq 0$.


Since $\mathbf{x}^\ast$ is the optimal solution to problem \eqref{eq:program-objective}-\eqref{eq:program-set-constraint}, we have $g_k(\mathbf{x}^\ast)\leq 0, \forall k \in\{1,2,\ldots,m\}$. Note that $\lambda_k(t+1) \geq 0, \forall k\in \{1,2,\ldots,m\}, \forall t\geq 0$. Thus,  
\begin{align}
\boldsymbol{\lambda}^T(t+1) \mathbf{g}(\mathbf{x}^\ast) \leq 0, \quad \forall t\geq 0 \label{eq:proof_eq_1}
\end{align}
Now we have,
\begin{align*}
B(t) = &-\frac{1}{2c} \Vert \boldsymbol{\lambda}(t+1) - \boldsymbol{\lambda}(t) \Vert^{2} - \frac{\alpha}{2} \Vert \mathbf{x}(t) - \mathbf{x}^\ast \Vert^2 + \boldsymbol{\lambda}^{T}(t) [ \mathbf{g}(\mathbf{x}^\ast) - \mathbf{g} (\mathbf{x}(t))] +\boldsymbol{\lambda}^{T}(t+1)\mathbf{g}(\mathbf{x}(t)) \\
\overset{(a)}{\leq} & -\frac{1}{2c} \Vert \boldsymbol{\lambda}(t+1) - \boldsymbol{\lambda}(t) \Vert^{2} - \frac{\alpha}{2} \Vert \mathbf{x}(t) - \mathbf{x}^\ast \Vert^2  + \boldsymbol{\lambda}^{T}(t) [ \mathbf{g}(\mathbf{x}^\ast) - \mathbf{g} (\mathbf{x}(t))] +\boldsymbol{\lambda}^{T}(t+1)\mathbf{g}(\mathbf{x}(t)) \\
&\quad - \boldsymbol{\lambda}^T(t+1) \mathbf{g}(\mathbf{x}^\ast) \\
= & -\frac{1}{2c} \Vert \boldsymbol{\lambda}(t+1)- \boldsymbol{\lambda}(t) \Vert^{2} - \frac{\alpha}{2} \Vert \mathbf{x}(t) - \mathbf{x}^\ast \Vert^2 +[ \boldsymbol{\lambda}^{T}(t) - \boldsymbol{\lambda}^T(t+1)]  [ \mathbf{g}(\mathbf{x}^\ast) - \mathbf{g} (\mathbf{x}(t))] \\
\overset{(b)}{\leq} & -\frac{1}{2c} \Vert \mathbf{Q}(t+1) - \mathbf{Q}(t) \Vert^{2} - \frac{\alpha}{2} \Vert \mathbf{x}(t) - \mathbf{x}^\ast \Vert^2  + \Vert \boldsymbol{\lambda}(t) - \boldsymbol{\lambda}(t+1)\Vert  \Vert \mathbf{g} (\mathbf{x}(t)) - \mathbf{g}(\mathbf{x}^\ast)\Vert \\
\overset{(c)}{\leq} & -\frac{1}{2c} \Vert \boldsymbol{\lambda}(t+1) - \boldsymbol{\lambda}(t) \Vert^{2} - \frac{\alpha}{2} \Vert \mathbf{x}(t) - \mathbf{x}^\ast \Vert^2  + \beta \Vert \boldsymbol{\lambda}(t) - \boldsymbol{\lambda}(t+1)\Vert  \Vert \mathbf{x}(t) - \mathbf{x}^\ast\Vert \\
= & -\frac{1}{2c} \Big( \Vert \boldsymbol{\lambda}(t+1) - \boldsymbol{\lambda}(t) \Vert - c\beta  \Vert \mathbf{x}(t) - \mathbf{x}^\ast\Vert \Big)^2  - \frac{1}{2}(\alpha - c \beta^2) \Vert \mathbf{x}(t) - \mathbf{x}^\ast \Vert^2 \\
\overset{(d)}{\leq} & 0
\end{align*}

where $(a)$ follows from \eqref{eq:proof_eq_1}; $(b)$ follows from the Cauchy-Schwarz inequality; $(c)$ follows from Assumption \ref{as:strongly-convex-problem}; and $(d)$ follows from $c\leq \frac{\alpha}{\beta^2}$.
\end{IEEEproof}

\begin{Thm}[Objective Value Violations] \label{thm:utility-convergence-time}
Let $\mathbf{x}^\ast \in \mathcal{X}$ be the optimal solution to problem \eqref{eq:program-objective}-\eqref{eq:program-set-constraint}. If $c\leq \frac{\alpha}{\beta^2}$ in Algorithm \ref{alg:dual-subgradient}, then
\begin{align*}
f(\overline{\mathbf{x}}(t)) \leq  f(\mathbf{x}^\ast) + \frac{\Vert \boldsymbol{\lambda}(0)\Vert^2}{2ct}, \forall t\geq 1.
\end{align*}
\end{Thm}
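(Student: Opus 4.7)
The plan is to derive the bound by telescoping the drift-plus-penalty inequality from Corollary \ref{cor:drift-plus-penality} and then invoking Jensen's inequality to transfer the bound from the time-average of $f(\mathbf{x}(\tau))$ to $f(\overline{\mathbf{x}}(t))$. This is the standard ``sum-the-drift'' maneuver, and strong convexity is not needed in this particular step beyond what was already used to establish \eqref{eq:drift-plus-penalty-bound}.

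First I would fix $t\geq 1$ and sum the inequality $\Delta(\tau) + V f(\mathbf{x}(\tau)) \leq V f(\mathbf{x}^\ast)$ over $\tau \in \{0,1,\ldots,t-1\}$. Since $\Delta(\tau) = L(\tau+1) - L(\tau)$, the drift sum telescopes to $L(t) - L(0)$, giving
\begin{align*}
L(t) - L(0) + V \sum_{\tau=0}^{t-1} f(\mathbf{x}(\tau)) \leq V t f(\mathbf{x}^\ast).
\end{align*}
Next I would use $L(t) = \tfrac{1}{2}\|\mathbf{Q}(t)\|^2 \geq 0$ and $L(0) = \tfrac{1}{2}\|\mathbf{Q}(0)\|^2$ to drop the nonnegative $L(t)$ term and divide by $Vt$, yielding
\begin{align*}
\frac{1}{t}\sum_{\tau=0}^{t-1} f(\mathbf{x}(\tau)) \leq f(\mathbf{x}^\ast) + \frac{\|\mathbf{Q}(0)\|^2}{2Vt}.
\end{align*}

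Finally, since $\overline{\mathbf{x}}(t) = \tfrac{1}{t}\sum_{\tau=0}^{t-1}\mathbf{x}(\tau)$ is a convex combination of iterates in the convex set $\mathcal{X}$, and $f$ is convex on $\mathcal{X}$ (strong convexity implies convexity), Jensen's inequality gives
\begin{align*}
f(\overline{\mathbf{x}}(t)) \leq \frac{1}{t}\sum_{\tau=0}^{t-1} f(\mathbf{x}(\tau)),
\end{align*}
and chaining the two inequalities delivers the claimed bound.

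There is no substantial obstacle here: the work has already been done in Lemma \ref{lm:drift-plus-penalty} and Lemma \ref{lm:negative-B(t)}, which together absorb the $B(t)$ terms (and in particular the strong-convexity $-\tfrac{V\alpha}{2}\|\mathbf{x}(t)-\mathbf{x}^\ast\|^2$ term) into the clean bound \eqref{eq:drift-plus-penalty-bound}. The only points to be careful about are (i) writing the sum over $\tau$ rather than over $t$ to avoid notational collision with the outer index, and (ii) verifying that $\overline{\mathbf{x}}(t)\in \mathcal{X}$ so Jensen applies, which follows from convexity of $\mathcal{X}$ and the fact that every $\mathbf{x}(\tau)\in \mathcal{X}$ by construction of Algorithm \ref{alg:drift-plus-penalty}.
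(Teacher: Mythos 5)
Your proposal is correct and follows essentially the same route as the paper's own proof: sum the bound from Corollary \ref{cor:drift-plus-penality} over $\tau\in\{0,\ldots,t-1\}$, telescope the drift to $L(t)-L(0)$, drop $L(t)\geq 0$, divide by $Vt$, and apply Jensen's inequality via the convexity of $f$. No gaps.
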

\begin{IEEEproof}
By Lemma \ref{lm:drift-plus-penalty}, we have $\frac{1}{c}\Delta(\tau) + f(\mathbf{x}(\tau)) \leq  f(\mathbf{x}^\ast)$ for all $\tau \in\{0,1,\ldots,t-1\}$. Summing over $\tau \in \{0,1,\ldots,t-1\}$ we have:
\begin{align*}
&\frac{1}{c}\sum_{\tau=0}^{t-1} \Delta(\tau) + \sum_{\tau=0}^{t-1} f(\mathbf{x}(\tau)) \leq t f(\mathbf{x}^\ast) \\
\Rightarrow\quad&\frac{1}{c} [L(t) - L(0)] + \sum_{\tau=0}^{t-1} f(\mathbf{x}(\tau)) \leq  t f(\mathbf{x}^\ast) \\
\Rightarrow\quad& \frac{1}{t}\sum_{\tau=0}^{t-1} f(\mathbf{x}(\tau)) \leq f(\mathbf{x}^\ast) + \frac{L(0) - L(t)}{ct} \leq f(\mathbf{x}^\ast) + \frac{L(0)}{ct}
\end{align*}
Note that $\overline{\mathbf{x}}(t) = \frac{1}{t}\sum_{\tau=0}^{t-1} \mathbf{x}(\tau)$ and by the convexity of $f(\mathbf{x})$, we have
\begin{align*}
f(\overline{\mathbf{x}}(t)) \leq \frac{1}{t}\sum_{\tau=0}^{t-1} f(\mathbf{x}(\tau)) \leq f(\mathbf{x}^\ast) + \frac{ L(0)}{ct} = f(\mathbf{x}^\ast) + \frac{\Vert \boldsymbol{\lambda}(0)\Vert^2}{2ct}
\end{align*} 
\end{IEEEproof}

\begin{Rem}
Similarly, we can prove that $ f(\widetilde{\mathbf{x}}(2t)) \leq f(\mathbf{x}^\ast) + \frac{\Vert \boldsymbol{\lambda}(t)\Vert^2}{2ct}$ since $\widetilde{\mathbf{x}}(2t) = \frac{1}{t} \sum_{\tau = t}^{2t-1} \mathbf{x}(\tau)$.  A later lemma (Lemma \ref{lm:queue-bound}) guarantees that $\Vert \boldsymbol{\lambda}(t)\Vert \leq \sqrt{\Vert \boldsymbol{\lambda}(0)\Vert^2 + \Vert \boldsymbol{\lambda}^\ast\Vert^2} + \Vert \boldsymbol{\lambda}^\ast\Vert $, where $\boldsymbol{\lambda}^\ast$ is defined in Assumption \ref{as:strong-duality}.  Thus, $f(\widetilde{\mathbf{x}}(2t)) \leq  f(\mathbf{x}^\ast) + \frac{\big(\sqrt{\Vert \boldsymbol{\lambda}(0)\Vert^2 + \Vert \boldsymbol{\lambda}^\ast\Vert^2} + \Vert \boldsymbol{\lambda}^\ast\Vert\big)^2}{2ct}, \forall t\geq 1.$ 
\end{Rem}

\subsection{Constraint Violations}

The analysis of constraint violations is similar to that in \cite{Neely14Arxiv_ConvergenceTime} for general convex programs. However, using the improved upper bound in Lemma \ref{lm:drift-plus-penalty}, the convergence time of constraint violations in strongly convex programs is order-wise better than that in general convex programs.

\begin{Lem}\label{lm:queue_constraint_inequality}
For any $t_2 > t_1 \geq 0 $, $\lambda_k(t_2) \geq \lambda_k(t_1)  + c\displaystyle{\sum_{\tau=t_1}^{t_2-1} g_k(\mathbf{x}(\tau))}, \forall k\in\{1,2,\ldots,m\}$. In particular, for any $t>0$,  $\lambda_k(t) \geq \lambda_k(0)  + c\displaystyle{\sum_{\tau=0}^{t-1} g_k(\mathbf{x}(\tau))}, \forall k\in\{1,2,\ldots,m\}$
\end{Lem}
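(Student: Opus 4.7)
The plan is to exploit the one-sided inequality hidden inside the max-projection that defines the virtual queue update, and then telescope. Specifically, the update $Q_k(t+1) = \max\{Q_k(t) + g_k(\mathbf{x}(t)),\, 0\}$ immediately gives the pointwise bound
\begin{align*}
Q_k(t+1) \;\geq\; Q_k(t) + g_k(\mathbf{x}(t)), \qquad \forall\, t\geq 0, \; \forall\, k\in\{1,\ldots,m\},
\end{align*}
because replacing a quantity by its maximum with $0$ can only make it larger. This is the single structural fact the proof will hinge on.

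Next, I would fix $k$ and any pair $t_2 > t_1 \geq 0$, and iterate the above one-step inequality across the indices $\tau = t_1, t_1+1, \ldots, t_2-1$. Summing the resulting chain (equivalently, a trivial induction on $t_2 - t_1$) telescopes the $Q_k$ terms and leaves the cumulative drive $\sum_{\tau=t_1}^{t_2-1} g_k(\mathbf{x}(\tau))$ on the right-hand side, yielding
\begin{align*}
Q_k(t_2) \;\geq\; Q_k(t_1) + \sum_{\tau=t_1}^{t_2-1} g_k(\mathbf{x}(\tau)).
\end{align*}
The special case announced in the statement is then just the specialization $t_1 = 0$.

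There is no real obstacle here: the only subtle point is noting that even though the queue update is a nonlinear projection (the $\max\{\cdot,0\}$), the relevant inequality is preserved in the direction we need, so no positivity assumption on the $g_k$'s or additional hypotheses from Assumptions \ref{as:strongly-convex-problem}--\ref{as:strong-duality} come into play. The result is purely a consequence of the queue dynamics and holds for any sequence $\mathbf{x}(\tau)$, regardless of how it is generated.
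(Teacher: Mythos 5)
Your proposal is correct and follows exactly the same route as the paper: the single observation that $Q_k(\tau+1) = \max\{Q_k(\tau)+g_k(\mathbf{x}(\tau)),0\} \geq Q_k(\tau)+g_k(\mathbf{x}(\tau))$, followed by induction/telescoping over $\tau$ from $t_1$ to $t_2-1$. No gaps.
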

\begin{IEEEproof}
Fix $k\in\{1,2,\ldots,m\}$. Note that $\lambda_k(t_1+1) = \max\{\lambda_k(t_1)+cg_k(\mathbf{x}(t_1)), 0\} \geq \lambda_k(t_1) + cg_k(\mathbf{x}(t_1))$. By induction, this lemma follows.
\end{IEEEproof}

\begin{Lem}\label{lm:queue-bound}
Let $\boldsymbol{\lambda}^\ast \geq \mathbf{0}$ be given in Assumption \ref{as:strong-duality}. If $c\leq \frac{\alpha}{\beta^2}$ in Algorithm \ref{alg:dual-subgradient}, then $\boldsymbol{\lambda}(t)$ satisfies
\begin{align}
\Vert \boldsymbol{\lambda}(t)\Vert \leq \sqrt{\Vert \boldsymbol{\lambda}(0)\Vert^2 + \Vert \boldsymbol{\lambda}^\ast\Vert^2} + \Vert \boldsymbol{\lambda}^\ast\Vert, \forall t\geq 1.
\end{align}
\end{Lem}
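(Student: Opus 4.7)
The plan is to chain Corollary \ref{cor:drift-plus-penality}, strong duality, and Lemma \ref{lm:queue_constraint_inequality} together to extract a recursive inequality in the Lyapunov function $L(t) = \tfrac{1}{2}\Vert \mathbf{Q}(t)\Vert^2$, and then to finish by solving a scalar quadratic inequality in $\Vert \mathbf{Q}(t)\Vert$.

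First, I would convert the primal-side bound of Corollary \ref{cor:drift-plus-penality} into a dual-side linear bound. By the Strong Duality Assumption \ref{as:strong-duality}, $f(\mathbf{x}^\ast) = q(\boldsymbol{\lambda}^\ast) = \min_{\mathbf{x}\in\mathcal{X}}\{f(\mathbf{x}) + (\boldsymbol{\lambda}^\ast)^T\mathbf{g}(\mathbf{x})\}$, so in particular $f(\mathbf{x}^\ast) \leq f(\mathbf{x}(t)) + (\boldsymbol{\lambda}^\ast)^T\mathbf{g}(\mathbf{x}(t))$. Inserting this into Corollary \ref{cor:drift-plus-penality} (after cancelling the $Vf(\mathbf{x}(t))$ terms) yields, for every $t\geq 0$,
$$\Delta(t) \;\leq\; V(\boldsymbol{\lambda}^\ast)^T \mathbf{g}(\mathbf{x}(t)).$$

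Next, I would telescope. Summing the above from $\tau=0$ to $t-1$ gives $L(t) - L(0) \leq V(\boldsymbol{\lambda}^\ast)^T \sum_{\tau=0}^{t-1} \mathbf{g}(\mathbf{x}(\tau))$. Lemma \ref{lm:queue_constraint_inequality} provides the componentwise bound $\sum_{\tau=0}^{t-1} g_k(\mathbf{x}(\tau)) \leq Q_k(t) - Q_k(0)$, and since $\boldsymbol{\lambda}^\ast \geq \mathbf{0}$ this inequality survives after multiplying by $\lambda_k^\ast$ and summing over $k$, yielding $\sum_{\tau=0}^{t-1} (\boldsymbol{\lambda}^\ast)^T\mathbf{g}(\mathbf{x}(\tau)) \leq (\boldsymbol{\lambda}^\ast)^T(\mathbf{Q}(t) - \mathbf{Q}(0))$. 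Using $\boldsymbol{\lambda}^\ast \geq \mathbf{0}$ and $\mathbf{Q}(0)\geq\mathbf{0}$ to drop the nonnegative term $V(\boldsymbol{\lambda}^\ast)^T\mathbf{Q}(0)$ on the right, and applying Cauchy-Schwarz to the remaining inner product, produces
$$\tfrac{1}{2}\Vert\mathbf{Q}(t)\Vert^2 \;\leq\; \tfrac{1}{2}\Vert\mathbf{Q}(0)\Vert^2 + V\Vert\boldsymbol{\lambda}^\ast\Vert\cdot\Vert\mathbf{Q}(t)\Vert.$$

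Finally, I would read this as a quadratic inequality in the unknown $z = \Vert\mathbf{Q}(t)\Vert$, namely $\tfrac{1}{2}z^2 - V\Vert\boldsymbol{\lambda}^\ast\Vert z - \tfrac{1}{2}\Vert\mathbf{Q}(0)\Vert^2 \leq 0$. The positive root of the corresponding equality is $V\Vert\boldsymbol{\lambda}^\ast\Vert + \sqrt{V^2\Vert\boldsymbol{\lambda}^\ast\Vert^2 + \Vert\mathbf{Q}(0)\Vert^2}$, which gives the claimed bound.

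The only conceptually substantive step is the first one: it is where the strong convexity of $f$ (packaged inside Corollary \ref{cor:drift-plus-penality}) combines with strong duality to replace $f(\mathbf{x}^\ast) - f(\mathbf{x}(t))$ by the linear functional $(\boldsymbol{\lambda}^\ast)^T\mathbf{g}(\mathbf{x}(t))$, which is what makes the queue drift telescope cleanly against the queue increment via Lemma \ref{lm:queue_constraint_inequality}. Everything else is routine algebra, the only subtlety being the careful use of $\boldsymbol{\lambda}^\ast\geq\mathbf{0}$ at each step to preserve inequalities under weighted summation.
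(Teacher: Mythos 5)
Your proof is correct and follows essentially the same route as the paper: both use strong duality to bound $f(\mathbf{x}^\ast)-f(\mathbf{x}(\tau))$ by $(\boldsymbol{\lambda}^\ast)^T\mathbf{g}(\mathbf{x}(\tau))$, telescope the drift bound of Corollary \ref{cor:drift-plus-penality}, invoke Lemma \ref{lm:queue_constraint_inequality} and Cauchy--Schwarz, and finish by solving the resulting quadratic inequality in $\Vert\mathbf{Q}(t)\Vert$. The only (immaterial) difference is that you combine the duality inequality with the drift bound per iteration before summing, whereas the paper sums the two chains separately and combines them at the end.
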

\begin{IEEEproof}
Let $\mathbf{x}^\ast$ be the optimal solution to problem \eqref{eq:program-objective}-\eqref{eq:program-set-constraint}. AAssumption \ref{as:strong-duality} implies that  
\begin{align*}
f(\mathbf{x}^\ast) =q(\boldsymbol{\lambda}^\ast) \leq f(\mathbf{x}(\tau)) + \sum_{k=1}^m \lambda_k^\ast g_k(\mathbf{x}(\tau)),\forall \tau\in\{0,1,\ldots\},
\end{align*}
where the inequality follows from the definition of $q(\boldsymbol{\lambda})$.

Thus, we have $f(\mathbf{x}^\ast) - f(\mathbf{x}(\tau)) \leq \sum_{k=1}^m \lambda_k^\ast g_k(\mathbf{x}(\tau)),\forall \tau \in\{0,1,\ldots\}$. Summing over $\tau\in \{0,1,\ldots, t-1\}$ yields 
\begin{align}
t f(\mathbf{x}^\ast) - \sum_{\tau=0}^{t-1} f(\mathbf{x}(\tau)) &\leq \sum_{\tau=0}^{t-1} \sum_{k=1}^m\lambda_k^\ast g_k(\mathbf{x}(\tau))\nonumber \\
&=\sum_{k=1}^m  \lambda_k^\ast \Big[\sum_{\tau=0}^{t-1}g_k(\mathbf{x}(\tau))\Big] \nonumber\\
&\overset{(a)}{\leq} \frac{1}{c}\sum_{k=1}^m  \lambda_k^\ast [\lambda_k(t) - \lambda_k(0)] \nonumber\\
&\leq \frac{1}{c}\sum_{k=1}^m  \lambda_k^\ast \lambda_k(t) \nonumber\\
&\overset{(b)}{\leq} \frac{1}{c} \Vert \boldsymbol{\lambda}^\ast\Vert \Vert \boldsymbol{\lambda}(t)\Vert  \label{eq:proof_lm8_eq1}
\end{align}
where $(a)$ follows from Lemma \ref{lm:queue_constraint_inequality} and $(b)$ follows from the Cauchy-Schwarz inequality. On the other hand, summing \eqref{eq:drift-plus-penalty-bound} in Lemma \ref{lm:drift-plus-penalty} over $\tau\in\{0,1,\ldots, t-1\}$ yields
\begin{align}
t f(\mathbf{x}^\ast) - \sum_{\tau=0}^{t-1} f(\mathbf{x}(\tau)) & \geq \frac{L(t) - L(0)}{c} \nonumber \\
&= \frac{\Vert \boldsymbol{\lambda}(t)\Vert^2 - \Vert \boldsymbol{\lambda}(0)\Vert^2}{2c} \label{eq:proof_lm8_eq2}
\end{align}
Combining \eqref{eq:proof_lm8_eq1} and \eqref{eq:proof_lm8_eq2} yields
\begin{align*}
&\frac{\Vert \boldsymbol{\lambda}(t)\Vert^2 - \Vert\boldsymbol{\lambda}(0)\Vert^2}{2c}  \leq \frac{1}{c}\Vert \boldsymbol{\lambda}^\ast\Vert \Vert \boldsymbol{\lambda}(t)\Vert \\
\Rightarrow\quad & \Big( \Vert \boldsymbol{\lambda}(t)\Vert - \Vert \boldsymbol{\lambda}^\ast\Vert \Big)^2 \leq  \Vert \boldsymbol{\lambda}(0)\Vert^2 + \Vert \boldsymbol{\lambda}^\ast\Vert^2 \\
\Rightarrow\quad & \Vert \boldsymbol{\lambda}(t)\Vert \leq \sqrt{\Vert \boldsymbol{\lambda}(0)\Vert^2 + \Vert \boldsymbol{\lambda}^\ast\Vert^2} + \Vert \boldsymbol{\lambda}^\ast\Vert
\end{align*}
\end{IEEEproof}

\begin{Thm}[Constraint Violations]\label{thm:constraint-convergence-time}
Let $\boldsymbol{\lambda}^\ast \geq \mathbf{0}$ be defined in Assumption \ref{as:strong-duality}. If $c\leq \frac{\alpha}{\beta^2}$ in Algorithm \ref{alg:dual-subgradient}, then the constraint functions satisfy $g_k (\overline{\mathbf{x}}(t)) \leq \frac{\sqrt{\Vert \boldsymbol{\lambda}(0)\Vert^2 + \Vert \boldsymbol{\lambda}^\ast\Vert^2} + \Vert \boldsymbol{\lambda}^\ast\Vert}{ct}, \forall k\in\{1,2,\ldots,m\}, \forall t\geq 1$.
\end{Thm}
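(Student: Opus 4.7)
The plan is to chain together Lemma \ref{lm:queue_constraint_inequality}, the convexity of $g_k$, and the queue-length bound from Lemma \ref{lm:queue-bound}. Fix any $k \in \{1,\ldots,m\}$ and any $t \geq 1$.

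First I would invoke Lemma \ref{lm:queue_constraint_inequality} with $t_1 = 0$ and $t_2 = t$ to obtain
\begin{align*}
\sum_{\tau=0}^{t-1} g_k(\mathbf{x}(\tau)) \leq Q_k(t) - Q_k(0) \leq Q_k(t),
\end{align*}
where the last inequality uses $Q_k(0) \geq 0$. Dividing by $t$ gives a bound on the running average of the constraint values along the sequence $\mathbf{x}(\tau)$.

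Next I would push this bound through to the averaged iterate $\overline{\mathbf{x}}(t) = \frac{1}{t}\sum_{\tau=0}^{t-1} \mathbf{x}(\tau)$ using convexity of $g_k$ (Jensen's inequality), which yields
\begin{align*}
g_k(\overline{\mathbf{x}}(t)) \leq \frac{1}{t}\sum_{\tau=0}^{t-1} g_k(\mathbf{x}(\tau)) \leq \frac{Q_k(t)}{t} \leq \frac{\Vert \mathbf{Q}(t) \Vert}{t},
\end{align*}
where the last step is the trivial bound $Q_k(t) \leq \Vert \mathbf{Q}(t)\Vert$ (valid because all coordinates of $\mathbf{Q}(t)$ are non-negative).

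Finally, I would substitute the virtual queue bound from Lemma \ref{lm:queue-bound}, namely $\Vert \mathbf{Q}(t)\Vert \leq \sqrt{\Vert \mathbf{Q}(0)\Vert^2 + V^2\Vert \boldsymbol{\lambda}^\ast\Vert^2} + V\Vert \boldsymbol{\lambda}^\ast\Vert$, which is precisely where the hypothesis $V \geq m\beta^2/\alpha$ (and the strong duality assumption) enters. Dividing by $t$ delivers the claimed inequality. Since $k$ was arbitrary, the bound holds for every $k \in \{1,\ldots,m\}$. There is no genuine obstacle here: all the real work has already been absorbed into Lemma \ref{lm:queue-bound} (whose proof requires the improved drift-plus-penalty bound of Corollary \ref{cor:drift-plus-penality}). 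The only subtlety to check is that the telescoping-plus-convexity step is valid regardless of whether $Q_k(\tau) + g_k(\mathbf{x}(\tau))$ ever drops below zero, which is exactly why Lemma \ref{lm:queue_constraint_inequality} was phrased as an inequality rather than an equality.
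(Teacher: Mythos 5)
Your proposal is correct and follows essentially the same chain as the paper's own proof: convexity of $g_k$ to pass to the averaged iterate, Lemma \ref{lm:queue_constraint_inequality} to bound the partial sums by $Q_k(t)$, and Lemma \ref{lm:queue-bound} to bound $\Vert\mathbf{Q}(t)\Vert$. Nothing further is needed.
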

\begin{IEEEproof}
Fix $t\geq 1$ and $k\in\{1,2,\ldots,m\}$. Recall that $\overline{\mathbf{x}}(t) = \frac{1}{t}\sum_{\tau=0}^{t-1} \mathbf{x}(\tau)$. Thus,
\begin{align*}
g_k (\overline{\mathbf{x}}(t)) &\overset{(a)}{\leq} \frac{1}{t} \sum_{\tau=0}^{t-1} g_k(\mathbf{x}(\tau)) \\
 &\overset{(b)}{\leq} \frac{\lambda_k(t) - \lambda_k(0)}{ct} \\
 &\leq \frac{\lambda_k(t)}{ct} \\
 &\leq \frac{\Vert \boldsymbol{\lambda}(t)\Vert}{ct} \\
 &\overset{(c)}{\leq} \frac{\sqrt{\Vert \boldsymbol{\lambda}(0)\Vert^2 + \Vert \boldsymbol{\lambda}^\ast\Vert^2} + \Vert \boldsymbol{\lambda}^\ast\Vert}{ct}
\end{align*}
where $(a)$ follows from the convexity of $g_k(\mathbf{x}), k\in\{1,2,\ldots,m\}$; $(b)$ follows from Lemma \ref{lm:queue_constraint_inequality}; and $(c)$ follows from Lemma \ref{lm:queue-bound}.
\end{IEEEproof}
\begin{Rem}
Similarly, we can prove that $g_k (\widetilde{\mathbf{x}}(2t)) \leq \frac{\sqrt{\Vert \boldsymbol{\lambda}(0)\Vert^2 + \Vert \boldsymbol{\lambda}^\ast\Vert^2} + \Vert \boldsymbol{\lambda}^\ast\Vert}{ct}, \forall k\in\{1,2,\ldots,m\}, \forall t\geq 1$
\end{Rem}

The next corollary provides a lower bound of $f(\overline{\mathbf{x}}(t))$ and follows directly from strong duality for convex programs and Theorem \ref{thm:constraint-convergence-time}.

\begin{Cor} Let $\boldsymbol{\lambda}^\ast \geq \mathbf{0}$ be defined in Assumption 2. If $c\leq \frac{\alpha}{\beta^{2}}$ in Algorithm 1, then $f(\overline{\mathbf{x}}(t))$ has a lower bound given by $
f(\overline{\mathbf{x}}(t)) \geq f(\mathbf{x}^{\ast}) - \frac{1}{t}\frac{\sqrt{\Vert \boldsymbol{\lambda}(0)\Vert^{2} + \Vert \boldsymbol{\lambda}^{\ast}\Vert^{2}} +\Vert \boldsymbol{\lambda}^{\ast}\Vert}{c}\sum_{k=1}^{m} \lambda_{k}^{\ast}, \forall t\geq 1$.
\end{Cor}
\begin{IEEEproof}
Fix $t\geq 1$. By the strong duality, we have 
\begin{align*}
f(\overline{\mathbf{x}}(t)) + \sum_{k=1}^{m} \lambda_{k}^{\ast} g_{k}(\overline{\mathbf{x}}(t)) \geq q(\boldsymbol{\lambda}^{\ast}) = f(\mathbf{x}^{\ast}) + \sum_{k=1}^{m} \lambda_{k}^{\ast} g_{k}(\mathbf{x}^{\ast}) =  f(\mathbf{x}^{\ast}) 
\end{align*}
Thus, we have
\begin{align*}
f(\overline{\mathbf{x}}(t)) \geq& f(\mathbf{x}^{\ast})  - \sum_{k=1}^{m} \lambda_{k}^{\ast} g_{k}(\overline{\mathbf{x}}(t))\\
\overset{(a)}{\geq} & f(\mathbf{x}^{\ast}) -\sum_{k=1}^{m} \lambda_{k}^{\ast} \frac{\sqrt{\Vert \boldsymbol{\lambda}(0)\Vert^{2} + \Vert \boldsymbol{\lambda}^{\ast}\Vert^{2}} + \Vert \boldsymbol{\lambda}^{\ast}\Vert}{ct} \\
=&f(\mathbf{x}^{\ast}) - \frac{1}{t}\frac{\sqrt{\Vert \boldsymbol{\lambda}(0)\Vert^{2} + \Vert \boldsymbol{\lambda}^{\ast}\Vert^{2}} +\Vert \boldsymbol{\lambda}^{\ast}\Vert}{c}\sum_{k=1}^{m} \lambda_{k}^{\ast} 
\end{align*}
where (a) follows from the constraint violation bound, i.e., Theorem \ref{thm:constraint-convergence-time}, and the fact that $\lambda_{k}^{\ast}\geq 0, \forall k\in\{1,2,\ldots,m\}$.
\end{IEEEproof}

\subsection{Convergence Time of Algorithm \ref{alg:dual-subgradient}}
The next theorem summarizes Theorem \ref{thm:utility-convergence-time} and Theorem \ref{thm:constraint-convergence-time}. 
\begin{Thm}\label{thm:overall-convergence-time}
Let $\mathbf{x}^\ast \in \mathcal{X}$ be the optimal solution to problem \eqref{eq:program-objective}-\eqref{eq:program-set-constraint}. Let $\boldsymbol{\lambda}^\ast \geq \mathbf{0}$ be given in Assumption \ref{as:strong-duality}. If $c\leq \frac{\alpha}{\beta^2}$ in Algorithm \ref{alg:dual-subgradient}, then for all $t\geq1$, 
\begin{align*}
&f(\overline{\mathbf{x}}(t)) \leq  f(\mathbf{x}^\ast) + \frac{\Vert \boldsymbol{\lambda}(0)\Vert^2}{2ct}. \\
&g_k (\overline{\mathbf{x}}(t)) \leq \frac{\sqrt{\Vert \boldsymbol{\lambda}(0)\Vert^2 + \Vert \boldsymbol{\lambda}^\ast\Vert^2} + \Vert \boldsymbol{\lambda}^\ast\Vert}{t}, \forall k\in\{1,2,\ldots,m\}.
\end{align*}
Specifically, if $\boldsymbol{\lambda}(0) = \mathbf{0}$, then
\begin{align*}
&f(\overline{\mathbf{x}}(t)) \leq  f(\mathbf{x}^\ast). \\
&g_k (\overline{\mathbf{x}}(t)) \leq \frac{2 \Vert \boldsymbol{\lambda}^\ast\Vert}{ct}, \forall k\in\{1,2,\ldots,m\}.
\end{align*}
In summary, if $c\leq \frac{\alpha}{\beta^2}$ in Algorithm \ref{alg:dual-subgradient}, then $\overline{\mathbf{x}}(t)$ ensures that error decays like $O(\frac{1}{t})$ and provides an $\epsilon$-approximiate solution with convergence time $O(\frac{1}{\epsilon})$.
\end{Thm}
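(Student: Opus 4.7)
The plan is to observe that this theorem is essentially a bookkeeping combination of the three results already proved: Theorem \ref{thm:utility-convergence-time}, Corollary \ref{cor:utility-convergence-time}, and Theorem \ref{thm:constraint-convergence-time}. Assuming $V \geq \frac{m\beta^2}{\alpha}$ (so that Corollary \ref{cor:drift-plus-penality} is in force), the first inequality on $f(\overline{\mathbf{x}}(t))$ is the direct statement of Theorem \ref{thm:utility-convergence-time}, and the inequality on $g_k(\overline{\mathbf{x}}(t))$ is the direct statement of Theorem \ref{thm:constraint-convergence-time}. The specialization to $\mathbf{Q}(0) = \mathbf{0}$ is obtained by substituting $\Vert \mathbf{Q}(0)\Vert = 0$: the objective bound collapses to Corollary \ref{cor:utility-convergence-time}, and the constraint bound simplifies since $\sqrt{V^2\Vert\boldsymbol{\lambda}^\ast\Vert^2} + V\Vert\boldsymbol{\lambda}^\ast\Vert = 2V\Vert\boldsymbol{\lambda}^\ast\Vert$.

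For the concluding $O(1/\epsilon)$ convergence-time claim, the key observation is that, unlike the general convex case where one must scale $V = 1/\epsilon$, here the required lower bound $V \geq \frac{m\beta^2}{\alpha}$ is a constant depending only on problem data. I would therefore fix $V = \frac{m\beta^2}{\alpha}$ (any fixed feasible choice works). Both bounds then take the form $\frac{C}{t}$ for a constant $C$ that depends on $m$, $\beta$, $\alpha$, $\Vert\mathbf{Q}(0)\Vert$, and $\Vert\boldsymbol{\lambda}^\ast\Vert$ but not on $\epsilon$. Taking $\tilde{h}(t) = \frac{C}{t}$ in the sense of the convergence-time definition in the introduction, the inverse gives $\tilde{h}^{-1}(\epsilon) = \frac{C}{\epsilon}$, so any $t \geq \frac{C}{\epsilon}$ yields an $\epsilon$-optimal solution $\overline{\mathbf{x}}(t)$.

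Since the proof is purely a composition of already-established results plus the choice of a constant $V$, there is no real technical obstacle; the only subtle point worth emphasizing in the writeup is precisely this contrast with the general convex case, namely that strong convexity removes the need to let $V$ grow with $\frac{1}{\epsilon}$, and this is exactly why the error scales as $O(1/t)$ rather than $O(1/\sqrt{t})$, yielding the $O(1/\epsilon)$ convergence time. I would keep the proof short: cite each component result, substitute $\mathbf{Q}(0) = \mathbf{0}$ for the specialization, and then state the asymptotic consequence explicitly.
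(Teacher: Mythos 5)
Your proposal is correct and follows exactly the paper's route: the paper gives no separate proof of this theorem, explicitly presenting it as a summary of Theorem \ref{thm:utility-convergence-time}, Corollary \ref{cor:utility-convergence-time}, and Theorem \ref{thm:constraint-convergence-time}, with the $\mathbf{Q}(0)=\mathbf{0}$ case obtained by substitution. Your additional remark that the $O(\frac{1}{\epsilon})$ convergence time follows because $V$ is a fixed constant independent of $\epsilon$ (unlike the general convex case where $V=\frac{1}{\epsilon}$) is precisely the intended justification of the concluding claim.
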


\begin{Rem}
If $c\leq \frac{\alpha}{\beta^2}$ in Algorithm \ref{alg:dual-subgradient}, then $\widetilde{\mathbf{x}}(t)$ also ensures that error decays like $O(\frac{1}{t})$ and provides an $\epsilon$-approximiate solution with convergence time $O(\frac{1}{\epsilon})$. 
\end{Rem}

\section{Extensions}\label{sec:extensions}
This section  shows that the convergence time of sliding running averages $\widetilde{\mathbf{x}}(t)$ is $O(\log(\frac{1}{\epsilon}))$ when the dual function of problem \eqref{eq:program-objective}-\eqref{eq:program-set-constraint} satisfies additional assumptions.

\subsection{Smooth Dual Functions}

\begin{Def}[Smooth Functions]
Let $\mathcal{X} \subseteq \mathbb{R}^n$ and function $h(\mathbf{x})$ be continuously differentiable on $\mathcal{X}$. Function $h(\mathbf{x})$ is said to be smooth on $\mathcal{X}$ with modulus $L$ if $\nabla_{\mathbf{x}} h(\mathbf{x})$ is Lipschitz continuous on $\mathcal{X}$ with modulus $L$.
\end{Def}

Define $q(\boldsymbol{\lambda}) = \min\limits_{\mathbf{x}\in \mathcal{X}}\{f(\mathbf{x})+ \boldsymbol{\lambda}^T\mathbf{g}(\mathbf{x})\}
$ as the {\it dual function} of problem \eqref{eq:program-objective}-\eqref{eq:program-set-constraint}. Recall that $f(\mathbf{x})$ is strongly convex with modulus $\alpha$ by Assumption \ref{as:strongly-convex-problem}. For fixed $\boldsymbol{\lambda}\in \mathbb{R}^{m}_{+}$, $f(\mathbf{x}) + \boldsymbol{\lambda}^T\mathbf{g}(\mathbf{x})$ is strongly convex with respect to $\mathbf{x}\in \mathcal{X}$ with modulus $\alpha$. Define $\mathbf{x}(\boldsymbol{\lambda}) =  \argmin_{\mathbf{x}\in \mathcal{X}} \{f(\mathbf{x}) + \boldsymbol{\lambda}^T \mathbf{g}(\mathbf{x})\}$. By Danskin's theorem (Proposition B.25 in \cite{book_NonlinearProgramming_Bertsekas}), $q(\boldsymbol{\lambda})$ is differentiable with gradient $\nabla_{\boldsymbol{\lambda}} q(\boldsymbol{\lambda}) = \mathbf{g}(\mathbf{x}(\boldsymbol{\lambda}))$. 

\begin{Lem}[Smooth Dual Functions] \label{lm:dual-smooth}
The dual function $q(\boldsymbol{\lambda})$ is smooth on $\mathbb{R}^m_+$ with modulus $\gamma = \frac{\beta^2}{\alpha}$. 
\end{Lem}
\begin{IEEEproof}
Fix $\boldsymbol{\lambda}, \boldsymbol{\mu} \in \mathbb{R}^{m}_{+}$. Let $\mathbf{x}(\boldsymbol{\lambda}) =  \argmin_{\mathbf{x}\in \mathcal{X}} \{f(\mathbf{x}) + \boldsymbol{\lambda}^T \mathbf{g}(\mathbf{x})\}$ and $\mathbf{x}(\boldsymbol{\mu}) =  \argmin_{\mathbf{x}\in \mathcal{X}} \{f(\mathbf{x}) + \boldsymbol{\mu}^T\mathbf{g}(\mathbf{x})\}$. By Corollary \ref{cor:strong-convex-optimum}, we have 
\begin{align*}
f(\mathbf{x}(\boldsymbol{\lambda})) + \boldsymbol{\lambda}^T \mathbf{g}(\mathbf{x}(\boldsymbol{\lambda})) \leq& f(\mathbf{x}(\boldsymbol{\mu})) + \boldsymbol{\lambda}^T \mathbf{g}(\mathbf{x}(\boldsymbol{\mu})) - \frac{\alpha}{2} \Vert \mathbf{x}(\boldsymbol{\lambda}) - \mathbf{x}(\boldsymbol{\mu})\Vert^{2}\\
f(\mathbf{x}(\boldsymbol{\mu})) + \boldsymbol{\mu}^T \mathbf{g}(\mathbf{x}(\boldsymbol{\mu})) \leq &f(\mathbf{x}(\boldsymbol{\lambda})) + \boldsymbol{\mu}^T \mathbf{g}(\mathbf{x}(\boldsymbol{\lambda})) - \frac{\alpha}{2} \Vert \mathbf{x}(\boldsymbol{\lambda}) - \mathbf{x}(\boldsymbol{\mu})\Vert^{2}
\end{align*}

 Summing the above two inequalities and simplifying gives
\begin{align*}
\alpha \Vert \mathbf{x}(\boldsymbol{\lambda}) - \mathbf{x}(\boldsymbol{\mu})\Vert^{2} &\leq [\boldsymbol{\mu} - \boldsymbol{\lambda}]^T [\mathbf{g}(\mathbf{x}(\boldsymbol{\lambda})) - \mathbf{g}(\mathbf{x}(\boldsymbol{\mu}))]  \\
&\overset{(a)}{\leq} \Vert \boldsymbol{\mu} - \boldsymbol{\lambda} \Vert \Vert \mathbf{g}(\mathbf{x}(\boldsymbol{\lambda})) - \mathbf{g}(\mathbf{x}(\boldsymbol{\mu}))\Vert \\
&\overset{(b)}{\leq} \beta  \Vert \boldsymbol{\mu} - \boldsymbol{\lambda} \Vert \Vert \mathbf{x}(\boldsymbol{\lambda}) - \mathbf{x}(\boldsymbol{\mu})\Vert
\end{align*}
where (a) follows from the Cauchy-Schwarz inequality and (b) follows because $\mathbf{g}(\mathbf{x})$ is Lipschitz coninuous. This implies
\begin{align}
\Vert \mathbf{x}(\boldsymbol{\lambda}) - \mathbf{x}(\boldsymbol{\mu})\Vert \leq \frac{\beta}{\alpha}\Vert  \boldsymbol{\lambda} - \boldsymbol{\mu}\Vert  \label{eq:pf-dual-smooth-eq1}
\end{align}
Thus, we have 
\begin{align*}
\Vert \nabla q(\boldsymbol{\lambda}) - \nabla q(\boldsymbol{\mu}) \Vert &\overset{(a)}{=} \Vert \mathbf{g}(\mathbf{x}(\boldsymbol{\lambda})) - \mathbf{g}(\mathbf{x}(\boldsymbol{\mu}))\Vert \\
&\overset{(b)}{\leq} \beta \Vert \mathbf{x}(\boldsymbol{\lambda}) - \mathbf{x}(\boldsymbol{\mu})\Vert\\
&\overset{(c)}{\leq} \frac{\beta^{2}}{\alpha} \Vert  \boldsymbol{\lambda} - \boldsymbol{\mu}\Vert
\end{align*}
where (a) follows from $\nabla_{\boldsymbol{\lambda}} q(\boldsymbol{\lambda}) = \mathbf{g}(\mathbf{x}(\boldsymbol{\lambda}))$; (b) follows from the Lipschitz continuity of $\mathbf{g}(\mathbf{x})$; and (c) follows from \eqref{eq:pf-dual-smooth-eq1}.

Thus, $q(\boldsymbol{\lambda})$ is smooth on $\mathbb{R}^{m}_{+}$ with modulus $L = \frac{\beta^{2}}{\alpha}$.
\end{IEEEproof}

Since $\nabla_{\boldsymbol{\lambda}} q(\boldsymbol{\lambda}(t)) = \mathbf{g}(\mathbf{x}(t))$, the dynamic of $\boldsymbol{\lambda}(t)$ can be interpreted as the projected gradient method with step size $c$ to solve $\max_{\boldsymbol{\lambda}\in \mathbb{R}^m_+} \left\{q(\boldsymbol{\lambda})\right\}$ where $\mathbf{q}(\cdot)$ is a smooth function by Lemma \ref{lm:dual-smooth}. Thus, we have the next lemma.

\begin{Lem} \label{lm:dual_problem_convergence}
Assume problem \eqref{eq:program-objective}-\eqref{eq:program-set-constraint} satisfies Assumptions \ref{as:strongly-convex-problem}-\ref{as:strong-duality}. If $c\leq \frac{\alpha}{\beta^2}$ in Algorithm \ref{alg:dual-subgradient}, then 
\begin{align*}
q(\boldsymbol{\lambda}^\ast) - q(\boldsymbol{\lambda}(t)) \leq \frac{1}{2ct} \Vert \boldsymbol{\lambda}(0) - \boldsymbol{\lambda}^\ast\Vert^2, \quad \forall t\geq 1.
\end{align*}
\end{Lem}
\begin{IEEEproof}
Recall that a projected gradient descent algorithm with step size $c< \frac{1}{
\gamma}$ converges to the maximum of a concave function with smooth modulus $\gamma$ with the error decaying like $O(\frac{1}{t})$. Thus, this lemma follows. The proof is essentially the same as the convergence time proof of the projected gradient method for set constrained smooth optimization in \cite{book_ConvexOpt_Nesterov}. See Appendix \ref{app:dual_problem_convergence} for the detailed proof. 
\end{IEEEproof}

\subsection{Convergence Time Analysis of Problems with Locally Quadratic Dual Functions}

In addition to Assumptions \ref{as:strongly-convex-problem}-\ref{as:strong-duality}, we further require the next assumption in this subsection.

\begin{Assumption} [Locally Quadratic Dual Functions] \label{as:dual-locally-quadratic}
Let $\boldsymbol{\lambda}^\ast$ be a Lagrange multiplier of problem \eqref{eq:program-objective}-\eqref{eq:program-set-constraint} defined in Assumption \ref{as:strong-duality}. There exists $D_q>0$ and $L_q>0$, where the subscript $q$ denotes locally ``quadratic", such that for all $\boldsymbol{\lambda}\in\{ \boldsymbol{\lambda} \in \mathbb{R}^m_+:\Vert \boldsymbol{\lambda} - \boldsymbol{\lambda}^\ast \Vert \leq D_q\}$, the dual function $q(\boldsymbol{\lambda}) = \min\limits_{\mathbf{x}\in \mathcal{X}}\Big\{f(\mathbf{x})+ \sum_{k=1}^m \lambda_k g_k(\mathbf{x})\Big\}$ satisfies
\begin{align*}
q(\boldsymbol{\lambda}^\ast) \geq q(\boldsymbol{\lambda}) + L_q \Vert \boldsymbol{\lambda} - \boldsymbol{\lambda}^\ast\Vert^2.
\end{align*} 
\end{Assumption}


\begin{Lem}\label{lm:small-dual-difference}
Suppose problem \eqref{eq:program-objective}-\eqref{eq:program-set-constraint} satisfies Assumptions \ref{as:strongly-convex-problem}, \ref{as:strong-duality} and \ref{as:dual-locally-quadratic}. Let $q(\boldsymbol{\lambda}), \boldsymbol{\lambda}^\ast, D_q$ and $L_q$ be defined in Assumption \ref{as:dual-locally-quadratic}. We have the following properties:
\begin{enumerate}
\item If $\boldsymbol{\lambda} \in \mathbb{R}^m_+$ and $q(\boldsymbol{\lambda}^\ast) - q(\boldsymbol{\lambda}) \leq L_{q} D_q^2$, then $\Vert \boldsymbol{\lambda} - \boldsymbol{\lambda}^\ast\Vert \leq D_q$.
\item The Lagrange multiplier defined in Assumption \ref{as:strong-duality} is unique.
\end{enumerate}

\end{Lem}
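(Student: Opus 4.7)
The plan is to prove the two claims in sequence, with the uniqueness of $\boldsymbol{\lambda}^\ast$ in part 2 being deduced as a direct corollary of part 1.

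For part 1, I would argue by contradiction. Suppose $\boldsymbol{\lambda} \in \mathbb{R}^m_+$ satisfies $q(\boldsymbol{\lambda}^\ast) - q(\boldsymbol{\lambda}) \leq L_q D_q^2$ yet $\Vert \boldsymbol{\lambda} - \boldsymbol{\lambda}^\ast \Vert > D_q$. The idea is to walk from $\boldsymbol{\lambda}^\ast$ toward $\boldsymbol{\lambda}$ until hitting the boundary of the ball where Assumption \ref{as:dual-locally-quadratic} applies. Specifically, set $\theta = D_q / \Vert \boldsymbol{\lambda} - \boldsymbol{\lambda}^\ast \Vert \in (0,1)$ and define $\tilde{\boldsymbol{\lambda}} = (1-\theta)\boldsymbol{\lambda}^\ast + \theta \boldsymbol{\lambda}$. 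Then $\tilde{\boldsymbol{\lambda}} \in \mathbb{R}^m_+$ by convexity of the orthant, and $\Vert \tilde{\boldsymbol{\lambda}} - \boldsymbol{\lambda}^\ast \Vert = D_q$, so Assumption \ref{as:dual-locally-quadratic} yields $q(\boldsymbol{\lambda}^\ast) - q(\tilde{\boldsymbol{\lambda}}) \geq L_q D_q^2$. On the other hand, the dual function $q$ is concave on $\mathbb{R}^m_+$ as a pointwise infimum of affine functions of $\boldsymbol{\lambda}$, so $q(\tilde{\boldsymbol{\lambda}}) \geq (1-\theta) q(\boldsymbol{\lambda}^\ast) + \theta q(\boldsymbol{\lambda})$. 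Rearranging gives $q(\boldsymbol{\lambda}^\ast) - q(\tilde{\boldsymbol{\lambda}}) \leq \theta \bigl(q(\boldsymbol{\lambda}^\ast) - q(\boldsymbol{\lambda})\bigr) \leq \theta L_q D_q^2 < L_q D_q^2$, contradicting the previous inequality.

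For part 2, suppose $\boldsymbol{\lambda}^\ast_1$ and $\boldsymbol{\lambda}^\ast_2$ are both Lagrange multipliers in the sense of Assumption \ref{as:strong-duality}. Since each attains the dual optimum, $q(\boldsymbol{\lambda}^\ast_1) = q(\boldsymbol{\lambda}^\ast_2) = f(\mathbf{x}^\ast)$. Fix $\boldsymbol{\lambda}^\ast = \boldsymbol{\lambda}^\ast_1$ in Assumption \ref{as:dual-locally-quadratic}. Then $q(\boldsymbol{\lambda}^\ast_1) - q(\boldsymbol{\lambda}^\ast_2) = 0 \leq L_q D_q^2$, so by part 1, $\Vert \boldsymbol{\lambda}^\ast_2 - \boldsymbol{\lambda}^\ast_1 \Vert \leq D_q$. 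Consequently $\boldsymbol{\lambda}^\ast_2$ lies in the ball where the local quadratic bound applies, yielding $q(\boldsymbol{\lambda}^\ast_1) \geq q(\boldsymbol{\lambda}^\ast_2) + L_q \Vert \boldsymbol{\lambda}^\ast_2 - \boldsymbol{\lambda}^\ast_1 \Vert^2$. Combined with $q(\boldsymbol{\lambda}^\ast_1) = q(\boldsymbol{\lambda}^\ast_2)$, this forces $\boldsymbol{\lambda}^\ast_2 = \boldsymbol{\lambda}^\ast_1$.

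The only delicate point will be part 1, specifically the handling of an arbitrary $\boldsymbol{\lambda}$ that may lie far outside the $D_q$-ball: Assumption \ref{as:dual-locally-quadratic} gives no direct information there, so the argument must use concavity of $q$ (a global property inherited from the inf-of-affine representation) to transport the local growth bound at the boundary point $\tilde{\boldsymbol{\lambda}}$ back to an inequality on $q(\boldsymbol{\lambda})$ itself. Once part 1 is in hand, part 2 is essentially immediate.
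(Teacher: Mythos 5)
Your proposal is correct and follows essentially the same route as the paper: part 1 via a convex-combination point on the boundary of the $D_q$-ball combined with concavity of $q$ to derive a contradiction, and part 2 by applying part 1 to a second multiplier and then invoking the local quadratic growth bound. No gaps.
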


\begin{IEEEproof}~
\begin{enumerate}
\item Assume not and there exists $\boldsymbol{\lambda}^{\prime}\in\mathbb{R}^{m}_{+}$ such that $q(\boldsymbol{\lambda}^\ast) - q(\boldsymbol{\lambda}^{\prime}) \leq L_{q} D_q^2$ and $\Vert \boldsymbol{\lambda}^{\prime} - \boldsymbol{\lambda}^\ast\Vert  > D_q$. Define $\boldsymbol{\lambda} = (1-\eta) \boldsymbol{\lambda}^\ast + \eta \boldsymbol{\lambda}^{\prime}$ for some $ \eta \in (0,1)$. Note that $\Vert \boldsymbol{\lambda} - \boldsymbol{\lambda}^{\ast}\Vert = \Vert \eta (\boldsymbol{\lambda}^{\prime} - \boldsymbol{\lambda}^{\ast})\Vert = \eta \Vert (\boldsymbol{\lambda}^{\prime} - \boldsymbol{\lambda}^{\ast})\Vert$. Thus, we can choose $\eta\in (0,1)$ such that $\Vert \boldsymbol{\lambda} - \boldsymbol{\lambda}^{\ast}\Vert = D_q$, i.e.,  $ \eta = \frac{D_q}{\Vert \boldsymbol{\lambda}^{\prime} - \boldsymbol{\lambda}^\ast\Vert }$. Note that $\boldsymbol{\lambda}\in \mathbb{R}^{m}_{+}$ because $\boldsymbol{\lambda}^{\prime}\in\mathbb{R}^{m}_{+}$ and $\boldsymbol{\lambda}^{\ast}\in\mathbb{R}^{m}_{+}$.  Since the dual function $q(\cdot)$ is a concave function, we have $q(\boldsymbol{\lambda}) \geq (1-\eta) q(\boldsymbol{\lambda}^{\ast}) + \eta q(\boldsymbol{\lambda}^{\prime}) $. Thus, $q(\boldsymbol{\lambda}^\ast) - q(\boldsymbol{\lambda}) \leq q(\boldsymbol{\lambda}^\ast) -\big( (1-\eta) q(\boldsymbol{\lambda}^{\ast}) + \eta q(\boldsymbol{\lambda}^{\prime}) \big) =  \eta (q(\boldsymbol{\lambda}^{\ast}) - q(\boldsymbol{\lambda}^{\prime})) \leq \eta L_q D_q^2$. This contradicts Assumption \ref{as:dual-locally-quadratic} that $q(\boldsymbol{\lambda}^{\ast}) - q(\boldsymbol{\lambda}) \geq L_{q} \Vert \boldsymbol{\lambda} - \boldsymbol{\lambda}^{\ast}\Vert^{2} = L_{q} D_q^{2}$. 

\item Assume not and there exists ${\boldsymbol{\mu}}^\ast \neq \boldsymbol{\lambda}^\ast$ such that ${\boldsymbol{\mu}}^\ast \in \mathbb{R}^m_+$ and $q(\boldsymbol{\mu}^\ast) = q(\boldsymbol{\lambda}^\ast)$. By part (1), $\Vert {\boldsymbol{\mu}}^\ast - {\boldsymbol{\lambda}}^\ast\Vert \leq D_q$. Thus, we have 
\begin{align*}
q(\boldsymbol{\mu}^{\ast}) &\overset{(a)}{\leq} q(\boldsymbol{\lambda}^{\ast}) - L_{q} \Vert {\boldsymbol{\mu}}^\ast -  \boldsymbol{\lambda}^\ast\Vert^{2} \\
& \overset{(b)}{< }q(\boldsymbol{\lambda}^{\ast})
\end{align*}
where $(a)$ follows from Assumption \ref{as:dual-locally-quadratic} and $(b)$ follows from the assumption that ${\boldsymbol{\mu}}^\ast \neq \boldsymbol{\lambda}^\ast$. This contradicts the assumption that $q(\boldsymbol{\mu}^\ast) = q(\boldsymbol{\lambda}^\ast)$.
\end{enumerate}
\end{IEEEproof}

Define
\begin{align}
T_q =  \frac{\Vert \boldsymbol{\lambda}(0) - \boldsymbol{\lambda}^{\ast}\Vert^2}{2cL_{q}D_q^{2}}, \label{eq:Tq}
\end{align}
where the subscript $q$ denotes locally ``quadratic".

\begin{Lem} \label{lm:local-smooth-arrive-time}
Assume problem \eqref{eq:program-objective}-\eqref{eq:program-set-constraint} satisfies Assumptions \ref{as:strongly-convex-problem}-\ref{as:dual-locally-quadratic}. If $c \leq \frac{\alpha}{\beta^2}$ in Algorithm \ref{alg:dual-subgradient}, then $\Vert \boldsymbol{\lambda}(t) - \boldsymbol{\lambda}^{\ast}\Vert \leq D_q$ for all $t \geq T_q$, where $T_q$ is defined in \eqref{eq:Tq}.
\end{Lem}
\begin{IEEEproof}
By Lemma \ref{lm:dual_problem_convergence} and Lemma \ref{lm:small-dual-difference}, if $\frac{1}{2ct} \Vert \boldsymbol{\lambda}(0) - \boldsymbol{\lambda}^\ast\Vert^2 \leq L_{q} D_q^{2}$, then $\Vert \boldsymbol{\lambda}(t) - \boldsymbol{\lambda}^{\ast}\Vert \leq D_q$.  It can be checked that $t \geq \frac{\Vert \boldsymbol{\lambda}(0) - \boldsymbol{\lambda}^{\ast}\Vert^2}{2cL_{q}D_q^{2}}$ implies that $\frac{1}{2ct} \Vert \boldsymbol{\lambda}(0) - \boldsymbol{\lambda}^\ast\Vert^2 \leq L_{q} D_q^{2}$.
\end{IEEEproof}
\begin{Lem}\label{lm:local-quadratic-dual-arrive-time}
Assume problem \eqref{eq:program-objective}-\eqref{eq:program-set-constraint} satisfies Assumptions \ref{as:strongly-convex-problem}-\ref{as:dual-locally-quadratic}. If $c\leq \frac{\alpha}{\beta^2}$ in Algorithm \ref{alg:dual-subgradient}, then 
\begin{enumerate}
\item $\Vert \boldsymbol{\lambda}(t) - \boldsymbol{\lambda}^\ast\Vert \leq \frac{1}{\sqrt{t}} \frac{1}{\sqrt{2cL_q}} \Vert \boldsymbol{\lambda}(0) - \boldsymbol{\lambda}^\ast\Vert, \forall t\geq T_q$, where $T_q$ is defined in \eqref{eq:Tq}.
\item $\Vert \boldsymbol{\lambda}(t) - \boldsymbol{\lambda}^\ast\Vert \leq \big( \sqrt{\frac{1}{1+2cL_q }} \big)^{t-T_q}\Vert \boldsymbol{\lambda}(T_q) - \boldsymbol{\lambda}^\ast\Vert\leq  \big(\frac{1}{\sqrt{1+2cL_q}}\big)^{t} D_q (1+2cL_q)^{\frac{T_q}{2}}$, $\forall t\geq T_q$ , where $T_q$ is defined in \eqref{eq:Tq}. 
\end{enumerate}
\end{Lem}
\begin{IEEEproof}
\begin{enumerate}
\item By Lemma \ref{lm:dual_problem_convergence}, $q(\boldsymbol{\lambda}^\ast) - q(\boldsymbol{\lambda}(t)) \leq\frac{1}{2ct} \Vert \boldsymbol{\lambda}(0) - \boldsymbol{\lambda}^\ast\Vert^2, \forall t\geq1$. By Lemma \ref{lm:local-smooth-arrive-time} and Assumption \ref{as:dual-locally-quadratic}, $q(\boldsymbol{\lambda}^\ast) - q(\boldsymbol{\lambda}(t)) \geq L_q \Vert \boldsymbol{\lambda} (t)- \boldsymbol{\lambda}^\ast\Vert^2, \forall t\geq T_q$. Thus, we have $L_q \Vert \boldsymbol{\lambda} (t)- \boldsymbol{\lambda}^\ast\Vert^2 \leq \frac{1}{2ct} \Vert \boldsymbol{\lambda}(0) - \boldsymbol{\lambda}^\ast\Vert^2, \forall t\geq T_q$, which implies that  $\Vert \boldsymbol{\lambda}(t) - \boldsymbol{\lambda}^\ast\Vert \leq \frac{1}{\sqrt{t}} \frac{1}{\sqrt{2cL_q}} \Vert \boldsymbol{\lambda}(0) - \boldsymbol{\lambda}^\ast\Vert, \forall t\geq T_q$.
\item By part (1), we know $\Vert \boldsymbol{\lambda}(t) -\boldsymbol{\lambda}^\ast\Vert \leq D_q, \forall t\geq T_q$. The second part is essentially a local version of Theorem 12 in \cite{Necoara15LinearConvergence}, which shows that the projected gradient method for set constrained smooth convex optimization converge geometrically if the objective function satisfies a quadratic growth condition.  See Appendix \ref{app:local-quadratic-dual-geometric-convergence} for the detailed proof.
\end{enumerate}
\end{IEEEproof}

\begin{Cor}\label{cor:local-quadratic-dual-distance-bound}
Assume problem \eqref{eq:program-objective}-\eqref{eq:program-set-constraint} satisfies Assumptions \ref{as:strongly-convex-problem}-\ref{as:dual-locally-quadratic}. If $c\leq \frac{\alpha}{\beta^2}$ in Algorithm \ref{alg:dual-subgradient}, then $\Vert \boldsymbol{\lambda}(2t) - \boldsymbol{\lambda}(t)\Vert \leq 2\big(\frac{1}{\sqrt{1+2cL_q}}\big)^{t} D_q(1+2cL_q)^{\frac{T_q}{2}},\forall t\geq T_q$,
where $T_q$ be defined in \eqref{eq:Tq}.
\end{Cor}
\begin{IEEEproof}
\begin{align*}
&\Vert \boldsymbol{\lambda}(2t) - \boldsymbol{\lambda}(t)\Vert \leq \Vert \boldsymbol{\lambda}(2t) - \boldsymbol{\lambda}^\ast\Vert + \Vert \boldsymbol{\lambda}(t) - \boldsymbol{\lambda}^\ast\Vert  \\
\overset{(a)}{\leq} &\big(\frac{1}{\sqrt{1+2cL_q}}\big)^{2t} D_q (1+2cL_q)^{\frac{T_q}{2}} + \big(\frac{1}{\sqrt{1+2cL_q}}\big)^{t} D_q (1+2cL_q)^{\frac{T_q}{2}}  \\
\overset{(b)}{\leq} & 2\big(\frac{1}{\sqrt{1+2cL_q}}\big)^{t} D_q (1+2cL_q)^{\frac{T_q}{2}},
\end{align*}
where (a) follows from part (2) in Lemma \ref{lm:local-quadratic-dual-arrive-time}; and (b) follows from $\frac{1}{\sqrt{1+2cLq}} <1$.
\end{IEEEproof}
\begin{Thm}\label{thm:local-quadratic-utility-convergence-time}
Assume problem \eqref{eq:program-objective}-\eqref{eq:program-set-constraint} satisfies Assumptions \ref{as:strongly-convex-problem}-\ref{as:dual-locally-quadratic}. Let $\mathbf{x}^\ast$ be the optimal solution and $\boldsymbol{\lambda}^\ast$ be defined in Assumption \ref{as:dual-locally-quadratic}.  If $c \leq \frac{\alpha}{\beta^2}$ in Algorithm \ref{alg:dual-subgradient}, then $$f(\widetilde{\mathbf{x}}(2t)) \leq   f(\mathbf{x}^\ast) + \frac{1}{t}\big(\frac{1}{\sqrt{1+2cL_q}}\big)^{t}\eta_q,\forall t\geq T_q,$$ where $$\eta_q = \frac{2D_{q}^{2} (1+2cL_q)^{T_{q}} +2D_{q} (1+2cL_q)^{\frac{T_{q}}{2}}(\sqrt{\Vert \boldsymbol{\lambda}(0)\Vert^{2} + \Vert \boldsymbol{\lambda}^{\ast}\Vert^{2}} + \Vert \boldsymbol{\lambda}^{\ast}\Vert)}{c}$$ and $T_q$ is defined in \eqref{eq:Tq}.
\end{Thm}
\begin{IEEEproof}
Fix $t  \geq T_q$. By Lemma \ref{lm:drift-plus-penalty}, we have $\frac{1}{c}\Delta(\tau) + f(\mathbf{x}(\tau)) \leq  f(\mathbf{x}^\ast)$ for all $\tau \in\{0,1,\ldots\}$. Summing over $\tau \in \{t,t+1,\ldots,2t-1\}$ yields $\frac{1}{c}\sum_{\tau=t}^{2t-1} \Delta(\tau) +  \sum_{\tau=t}^{2t-1} f(\mathbf{x}(\tau)) \leq t f(\mathbf{x}^\ast)$. Dividing by factor $t$ yields 
\begin{align}
\frac{1}{t}\sum_{\tau=t}^{2t-1} f(\mathbf{x}(\tau)) \leq f(\mathbf{x}^\ast) + \frac{L(t) - L(2t)}{ct}  \label{eq:proof-thm5-eq1}
\end{align}
Thus, we have
\begin{align*}
&\quad f(\widetilde{\mathbf{x}}(2t)) \overset{(a)}{\leq} \frac{1}{t}\sum_{\tau=t}^{2t-1} f(\mathbf{x}(\tau)) \overset{(b)}{\leq} f(\mathbf{x}^\ast) + \frac{ L(t) - L(2t)}{ct}\\
&= f(\mathbf{x}^\ast) + \frac{\Vert \boldsymbol{\lambda}(t)\Vert^2 - \Vert \boldsymbol{\lambda}(2t)\Vert^2}{2ct}\\
&= f(\mathbf{x}^\ast) + \frac{\Vert \boldsymbol{\lambda}(t) - \boldsymbol{\lambda}(2t) + \boldsymbol{\lambda}(2t)\Vert^2 - \Vert \boldsymbol{\lambda}(2t)\Vert^2}{2ct}\\
&\overset{(c)}{\leq} f(\mathbf{x}^\ast) + \frac{ \Vert \boldsymbol{\lambda}(t) - \boldsymbol{\lambda}(2t)\Vert^2 + 2 \Vert \boldsymbol{\lambda}(2t)\Vert \Vert\boldsymbol{\lambda}(t) - \boldsymbol{\lambda}(2t)\Vert}{2ct}\\
&\overset{(d)}{\leq} f(\mathbf{x}^\ast) + \frac{ \Big(2\big(\frac{1}{\sqrt{1+2cL_q}}\big)^{t} D_q (1+2cL_q)^{\frac{T_q}{2}}\Big)^2 }{2ct}+ \frac{4 \big(\frac{1}{\sqrt{1+2cL_q}}\big)^{t} D_q (1+2cL_q)^{\frac{T_q}{2}}  \Vert \boldsymbol{\lambda}(2t)\Vert}{2ct}\\
&\overset{(e)}{\leq} f(\mathbf{x}^\ast) + \frac{1}{t} \big(\frac{1}{\sqrt{1+2cL_q}}\big)^t \Big(\frac{2D_q^2 (1+2cL_q)^{T_q}}{c} + \frac{2D_{q} (1+2cL_q)^{\frac{T_{q}}{2}} \Vert \boldsymbol{\lambda}(2t)\Vert}{c}\Big)\\
&\overset{(f)}{\leq} f(\mathbf{x}^\ast) + \frac{1}{t}\big(\frac{1}{\sqrt{1+2cL_q}}\big)^{t}\eta_q
\end{align*} 
where (a) follows from $\widetilde{\mathbf{x}}(2t) = \frac{1}{t}\sum_{\tau=t}^{2t-1} \mathbf{x}(\tau)$ and  the convexity of $f(\mathbf{x})$; (b) follows from \eqref{eq:proof-thm5-eq1}; (c) follows from the Cauchy-Schwarz inequality; (d) follows from Corollary  \ref{cor:local-quadratic-dual-distance-bound}; (e) follows from  $\frac{1}{\sqrt{1+2cL_q}} <1$; and (f) follows from $\Vert \boldsymbol{\lambda}(2t)\Vert \leq \sqrt{\Vert \boldsymbol{\lambda}(0)\Vert^{2} + \Vert \boldsymbol{\lambda}^{\ast}\Vert^{2}} + \Vert \boldsymbol{\lambda}^{\ast}\Vert$ and the definition of $\eta_q$.
\end{IEEEproof}

\begin{Thm}\label{thm:local-quadratic-constraint-convergence-time}
Assume problem \eqref{eq:program-objective}-\eqref{eq:program-set-constraint} satisfies Assumptions \ref{as:strongly-convex-problem}-\ref{as:dual-locally-quadratic}. If $c\leq\frac{\alpha}{\beta^2}$ in Algorithm \ref{alg:dual-subgradient}, then $$g_k (\widetilde{\mathbf{x}}(2t)) \leq \frac{2D_{q} (1+2cL_q)^{\frac{T_{q}}{2}}}{c}\frac{1}{t} \big(\frac{1}{\sqrt{1+2cL_q}}\big)^{t} , \forall k\in\{1,2,\ldots,m\}, \forall t\geq T_q,$$ where $T_q$ is defined in \eqref{eq:Tq}.
\end{Thm}
\begin{IEEEproof}
Fix $t\geq T_q$ and $k\in\{1,2,\ldots,m\}$. Thus, we have
\begin{align*}
g_k(\widetilde{\mathbf{x}}(2t)) &\overset{(a)}{\leq} \frac{1}{t} \sum_{\tau=t}^{2t-1} g_k(\mathbf{x}(\tau)) \overset{(b)}{\leq} \frac{1}{ct} \big(\lambda_k(2t) - \lambda_k(t)\big) \\
&\leq \frac{1}{ct} \Vert \boldsymbol{\lambda}(2t) - \boldsymbol{\lambda}(t)\Vert \\
&\overset{(c)}{\leq} \frac{2D_{q} (1+2cL_q)^{\frac{T_{q}}{2}}}{ct} \big(\frac{1}{\sqrt{1+2cL_q}}\big)^{t}
\end{align*}
where $(a)$ follows from the convexity of $g_k(\mathbf{x})$; $(b)$ follows from Lemma \ref{lm:queue_constraint_inequality}; and $(c)$ follows from Corollary \ref{cor:local-quadratic-dual-distance-bound}.
\end{IEEEproof}

Under Assumptions \ref{as:strongly-convex-problem}-\ref{as:dual-locally-quadratic}, Theorems \ref{thm:local-quadratic-utility-convergence-time} and \ref{thm:local-quadratic-constraint-convergence-time} show that if $c \leq \frac{\alpha}{\beta^2}$,  then $\widetilde{\mathbf{x}}(t)$ provides an $\epsilon$-approximate solution with convergence time $O(\log(\frac{1}{\epsilon}))$. This is formally summarized in the next theorem:

\begin{Thm}
Assume problem \eqref{eq:program-objective}-\eqref{eq:program-set-constraint} satisfies Assumptions \ref{as:strongly-convex-problem}-\ref{as:dual-locally-quadratic}. Let $\mathbf{x}^\ast \in \mathcal{X}$ be the optimal solution and $\boldsymbol{\lambda}^\ast \geq \mathbf{0}$ be the Lagrange multiplier defined in Assumption \ref{as:strong-duality}.   If $c \leq \frac{\alpha}{\beta^2}$ in Algorithm \ref{alg:dual-subgradient}, then for all $t\geq T_q$, 
\begin{align*}
f(\tilde{\mathbf{x}}(2t)) \leq   f(\mathbf{x}^\ast) + \frac{1}{t}\big(\frac{1}{\sqrt{1+2cL_q}}\big)^{t}\eta_q,
\end{align*}
\begin{align*}
g_k (\widetilde{\mathbf{x}}(2t)) \leq \frac{2D_{q} (1+2cL_q)^{\frac{T_{q}}{2}}}{c}\frac{1}{t} \big(\frac{1}{\sqrt{1+2cL_q}}\big)^{t} , \forall k\in\{1,2,\ldots,m\}
\end{align*}
where $\eta_q = \frac{2D_{q}^{2} (1+2cL_q)^{T_{q}} +2D_{q} (1+2cL_q)^{\frac{T_{q}}{2}}(\sqrt{\Vert \boldsymbol{\lambda}(0)\Vert^{2} + \Vert \boldsymbol{\lambda}^{\ast}\Vert^{2}} + \Vert \boldsymbol{\lambda}^{\ast}\Vert)}{c}$ is a fixed constant and $T_c$ is defined in \eqref{eq:Tc}. In summary,  if $c\leq \frac{\alpha}{\beta^2}$ in Algorithm \ref{alg:dual-subgradient}, then $\widetilde{\mathbf{x}}(2t)$ ensures error decays like $O\big(\frac{1}{t}(\frac{1}{\sqrt{1+2cL_q}})^{t}\big)$ and provides an $\epsilon$-approximiate solution with convergence time $O(\log(\frac{1}{\epsilon}))$.
\end{Thm}

\subsection{Convergence Time Analysis of Problems with Locally Strongly Concave Dual Functions}

The following assumption is stronger than  Assumptions \ref{as:dual-locally-quadratic} but can be easier to verify in certain cases.  For example, if the dual function of the convex program is available, Assumption \ref{as:dual-locally-stronlgy-concave} is easier to verify, e.g., by studying the Hessian of the dual function.

\begin{Assumption} [Locally Strongly Concave Dual Functions] \label{as:dual-locally-stronlgy-concave}
Let $\boldsymbol{\lambda}^\ast$ be a Lagrange multiplier vector defined in Assumption \ref{as:strong-duality}. There exists $D_c>0$ and $L_c>0$, where the subscript $c$ denotes locally strongly ``concave", such that the dual function $q(\boldsymbol{\lambda})$ is strongly concave with modulus $L_c$ over $\{ \boldsymbol{\lambda} \in\mathbb{R}^m_+:\Vert \boldsymbol{\lambda} - \boldsymbol{\lambda}^\ast \Vert \leq D_c\}$.
\end{Assumption}

The next lemma summarizes that Assumption \ref{as:dual-locally-stronlgy-concave} implies Assumption \ref{as:dual-locally-quadratic}.

\begin{Lem} \label{lm:strongly-concave-imply-locally-quadratic}
If problem \eqref{eq:program-objective}-\eqref{eq:program-set-constraint} satisfies Assumption \ref{as:dual-locally-stronlgy-concave}, then it also satisfies Assumption \ref{as:dual-locally-quadratic} with $D_q = D_c$ and $L_q = \frac{L_c}{2}$.
\end{Lem}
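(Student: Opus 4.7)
The plan is to derive the locally quadratic inequality by combining Lemma \ref{lm:strong-convex} applied to the strongly convex function $-q$ with the first-order optimality condition (Lemma \ref{lm:first-order-optimality}) at the dual optimum $\boldsymbol{\lambda}^\ast$. Throughout, let $B = \{\boldsymbol{\lambda}\in \mathbb{R}^m_+ : \Vert \boldsymbol{\lambda} - \boldsymbol{\lambda}^\ast \Vert \leq D_c\}$, which is a convex set containing $\boldsymbol{\lambda}^\ast$ in its (relative) interior. The key observation is that $\boldsymbol{\lambda}^\ast$ is a global maximizer of $q$ over $\mathbb{R}^m_+$: indeed, weak duality gives $q(\boldsymbol{\lambda}) \leq f(\mathbf{x}^\ast)$ for all $\boldsymbol{\lambda}\in\mathbb{R}^m_+$, while Assumption \ref{as:strong-duality} gives $q(\boldsymbol{\lambda}^\ast) = f(\mathbf{x}^\ast)$. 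Equivalently, $\boldsymbol{\lambda}^\ast$ is a global minimizer of the convex function $-q$ on $\mathbb{R}^m_+$.

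First I would invoke Lemma \ref{lm:strong-convex} for the function $-q$, which Assumption \ref{as:dual-locally-stronlgy-concave} renders strongly convex with modulus $L_c$ on $B$. Taking the role of ``$\mathbf{y}$'' to be $\boldsymbol{\lambda}^\ast$ and the role of ``$\mathbf{x}$'' to be an arbitrary $\boldsymbol{\lambda}\in B$, this yields, for every $\mathbf{d}\in \partial(-q)(\boldsymbol{\lambda}^\ast)$,
\begin{align*}
-q(\boldsymbol{\lambda}^\ast) - \bigl(-q(\boldsymbol{\lambda})\bigr) \leq \mathbf{d}^T(\boldsymbol{\lambda}^\ast - \boldsymbol{\lambda}) - \frac{L_c}{2}\Vert \boldsymbol{\lambda}^\ast - \boldsymbol{\lambda}\Vert^2.
\end{align*}
Next I would apply Lemma \ref{lm:first-order-optimality} to $-q$ as a convex function on $\mathbb{R}^m_+$ (it is convex there since $q$ is concave as a min of affine functions) minimized at $\boldsymbol{\lambda}^\ast$: this produces a specific subgradient $\mathbf{d}\in \partial(-q)(\boldsymbol{\lambda}^\ast)$ satisfying $\mathbf{d}^T(\boldsymbol{\lambda} - \boldsymbol{\lambda}^\ast)\geq 0$ for every $\boldsymbol{\lambda}\in\mathbb{R}^m_+$, and hence $\mathbf{d}^T(\boldsymbol{\lambda}^\ast - \boldsymbol{\lambda}) \leq 0$ in particular for $\boldsymbol{\lambda}\in B$.

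Substituting this distinguished $\mathbf{d}$ into the previous inequality kills the linear term and, after rearranging, gives
\begin{align*}
q(\boldsymbol{\lambda}^\ast) \geq q(\boldsymbol{\lambda}) + \frac{L_c}{2}\Vert \boldsymbol{\lambda} - \boldsymbol{\lambda}^\ast\Vert^2 \qquad \forall \boldsymbol{\lambda}\in B,
\end{align*}
which is precisely Assumption \ref{as:dual-locally-quadratic} with $D_q = D_c$ and $L_q = L_c/2$. The only delicate point is making sure we use the same subgradient $\mathbf{d}$ in both steps, which is fine because Lemma \ref{lm:strong-convex} holds for every $\mathbf{d}\in \partial(-q)(\boldsymbol{\lambda}^\ast)$, whereas Lemma \ref{lm:first-order-optimality} only asserts the existence of one such $\mathbf{d}$; selecting the latter is enough. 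A secondary bookkeeping issue is that strong convexity of $-q$ is only assumed on the local ball $B$ rather than globally, but Lemma \ref{lm:strong-convex} only requires the inequality between two points both lying in the set where strong convexity holds, and $B$ is convex with $\boldsymbol{\lambda}^\ast, \boldsymbol{\lambda}\in B$, so the lemma applies directly. I do not anticipate any substantive obstacle beyond these bookkeeping checks.
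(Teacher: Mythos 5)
Your proposal is correct and follows essentially the same route as the paper's proof: apply Lemma \ref{lm:first-order-optimality} to the convex function $-q$ minimized at $\boldsymbol{\lambda}^\ast$ to obtain a subgradient $\mathbf{d}$ with $\mathbf{d}^T(\boldsymbol{\lambda}-\boldsymbol{\lambda}^\ast)\geq 0$, then feed that same $\mathbf{d}$ into Lemma \ref{lm:strong-convex} for $-q$ on the ball $\{\boldsymbol{\lambda}\in\mathbb{R}^m_+ : \Vert\boldsymbol{\lambda}-\boldsymbol{\lambda}^\ast\Vert\leq D_c\}$ so the linear term drops and the quadratic lower bound with constant $L_c/2$ remains. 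Your extra bookkeeping (why $\boldsymbol{\lambda}^\ast$ maximizes $q$ over $\mathbb{R}^m_+$, and why restricting strong convexity to the local ball is harmless) is sound and only makes explicit what the paper leaves implicit.
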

\begin{IEEEproof}
Since $q(\cdot)$ is strongly concave, $\tilde{q}(\boldsymbol{\lambda}) = -q(\boldsymbol{\lambda})$ is strongly convex and is minimized at $\boldsymbol{\lambda}^\ast$. By Lemma \ref{lm:first-order-optimality}, there exists $\mathbf{d}\in \partial \tilde{q}(\boldsymbol{\lambda}^\ast)$ such that $\mathbf{d}^T ( \boldsymbol{\lambda} - \boldsymbol{\lambda}^\ast) \geq 0$ for all $\boldsymbol{\lambda}\geq \mathbf{0}$.  By Lemma \ref{lm:strong-convex}, we further have $\tilde{q}(\boldsymbol{\lambda}^\ast) - \tilde{q}(\boldsymbol{\lambda}) \leq \mathbf{d}^T (\boldsymbol{\lambda}^\ast - \boldsymbol{\lambda}) - \frac{L_c}{2} \Vert \boldsymbol{\lambda} - \boldsymbol{\lambda}^\ast\Vert^2 \leq  - \frac{L_c}{2} \Vert \boldsymbol{\lambda} - \boldsymbol{\lambda}^\ast\Vert^2$ for all $\boldsymbol{\lambda}\in \{ \boldsymbol{\lambda} \in\mathbb{R}^m_+:\Vert \boldsymbol{\lambda} - \boldsymbol{\lambda}^\ast \Vert \leq D_c\}$. Thus, $q(\boldsymbol{\lambda}^\ast) \geq q(\boldsymbol{\lambda})+ \frac{L_c}{2} \Vert \boldsymbol{\lambda} - \boldsymbol{\lambda}^\ast\Vert^2 $ for all $\boldsymbol{\lambda}\in \{ \boldsymbol{\lambda} \in\mathbb{R}^m_+:\Vert \boldsymbol{\lambda} - \boldsymbol{\lambda}^\ast \Vert \leq D_c\}$.
\end{IEEEproof}

Since Assumption \ref{as:dual-locally-stronlgy-concave} implies Assumption \ref{as:dual-locally-quadratic}, by the results from the previous subsection,  $\widetilde{\mathbf{x}}(t)$ from Algorithm \ref{alg:dual-subgradient} provides an $\epsilon$-approximate solution with convergence time $O(\log(\frac{1}{\epsilon}))$.  In this subsection, we show that if problem \eqref{eq:program-objective}-\eqref{eq:program-set-constraint} satisfies Assumption \ref{as:dual-locally-stronlgy-concave}, then the geometric error decay of both objective violations and constraint violations has a smaller contraction modulus.  

The next lemma relates the smoothness of the dual function and Assumption \ref{as:dual-locally-stronlgy-concave}. 
\begin{Lem}\label{lm:smooth-modulus-strong-concave-modulus}
If function $h$ is both smooth with modulus $\gamma$ and strongly concave with modulus $L_c$ over set $\mathcal{X}$, which is not a singleton, then $ L_c\leq \gamma$. 
\end{Lem}
\begin{proof}
This is a basic fact in convex analysis. See Appendix \ref{app:smooth-modulus-strong-concave-modulus} for the detailed proof.
\end{proof}

Recall that the dual function $q(\boldsymbol{\lambda})$ of problem \eqref{eq:program-objective}-\eqref{eq:program-set-constraint} is smooth with modulus $\gamma =\frac{\beta^2}{\alpha}$. Thus, Lemma \ref{lm:smooth-modulus-strong-concave-modulus} implies that $L_c \leq \frac{\beta^2}{\alpha}$, which further implies that $1 - cL_c \geq 0$ as long as $c \leq \frac{\alpha}{\beta^2}$.

For any problem \eqref{eq:program-objective}-\eqref{eq:program-set-constraint} satisfying Assumptions \ref{as:strongly-convex-problem}-\ref{as:strong-duality} and \ref{as:dual-locally-stronlgy-concave}, we define 
\begin{align}
T_c =  \frac{\Vert \boldsymbol{\lambda}(0) - \boldsymbol{\lambda}^{\ast}\Vert^2}{cL_{c}D_c^{2}}, \label{eq:Tc}
\end{align}
where the subscript $c$ denotes locally strongly ``concave".

\begin{Lem} \label{lm:local-strongly-concave-arrive-time}
Assume problem \eqref{eq:program-objective}-\eqref{eq:program-set-constraint} satisfies Assumptions \ref{as:strongly-convex-problem}-\ref{as:strong-duality} and \ref{as:dual-locally-stronlgy-concave}. Let $D_c$ and $L_{c}$ be defined in Assumption \ref{as:dual-locally-stronlgy-concave}. If  $c\leq  \frac{\alpha}{\beta^2}$ in Algorithm \ref{alg:dual-subgradient}, then   
\begin{enumerate}
\item $\Vert \boldsymbol{\lambda}(t) - \boldsymbol{\lambda}^{\ast}\Vert \leq D_c$ for all $t \geq T_c$ , where $T_c$ is defined in \eqref{eq:Tc}. 
\item $\Vert \boldsymbol{\lambda}(t) - \boldsymbol{\lambda}^\ast\Vert \leq \big( \sqrt{1- cL_c} \big)^{t-T_c}\Vert \boldsymbol{\lambda}(T_c) - \boldsymbol{\lambda}^\ast\Vert\leq  \big(\sqrt{1- cL_c}\big)^{t} \frac{D_c}{(\sqrt{1- c L_c})^{T_c}}$, $\forall t\geq T_c$ , where $T_c$ is defined in \eqref{eq:Tc}. 
\end{enumerate}
\end{Lem}
\begin{IEEEproof}~
\begin{enumerate}
\item By Lemma \ref{lm:strongly-concave-imply-locally-quadratic}, $q(\cdot)$ is locally quadratic with $D_q = D_c$ and $L_{q} = \frac{L_{c}}{2}$. The remaining part of the proof is identical to the proof of Lemma \ref{lm:local-smooth-arrive-time}.
\item By part (1) of this lemma, $\boldsymbol{\lambda}(t) \in \{ \boldsymbol{\lambda} \in\mathbb{R}^m_+:\Vert \boldsymbol{\lambda} - \boldsymbol{\lambda}^\ast \Vert \leq D_c\}, \forall t\geq T_{c}$.  That is, the dynamic of $\boldsymbol{\lambda}(t), t\geq T_{c}$ is the same as that in the projected gradient method with step size $c$ to solve\footnote{Recall that the projected gradient method with constant step size when applied to set constrained smooth and strongly convex optimization converges to the optimal solution at the rate $O(\kappa^{t})$ where $\kappa$ is a parameter depending on the step size, smoothness modulus and strong convexity modulus \cite{book_ConvexOpt_Nesterov}.} $\max_{\boldsymbol{\lambda} \in \{ \boldsymbol{\lambda} \in\mathbb{R}^m_+:\Vert \boldsymbol{\lambda} - \boldsymbol{\lambda}^\ast \Vert \leq D_c\}} \big\{q(\boldsymbol{\lambda})\big\}$. Thus, the part is essentially a local version of the convergence time result of the projected gradient method for set constrained smooth and strongly convex optimization \cite{book_ConvexOpt_Nesterov}. See Appendix \ref{app:local-strongly-concave-dual-geometric-convergence}
 for the detailed proof.

\end{enumerate}
\end{IEEEproof}

The next corollary follows from part (2) of Lemma \ref{lm:local-strongly-concave-arrive-time}.

\begin{Cor}\label{cor:locally-strongly-concave-dual-distance-bound}
Assume problem \eqref{eq:program-objective}-\eqref{eq:program-set-constraint} satisfies Assumptions \ref{as:strongly-convex-problem}-\ref{as:strong-duality} and \ref{as:dual-locally-stronlgy-concave}. If $c\leq \frac{\alpha}{\beta^2}$ in Algorithm \ref{alg:dual-subgradient}, then
\begin{align*}
\Vert \boldsymbol{\lambda}(2t) - \boldsymbol{\lambda}(t)\Vert &\leq \big(\sqrt{1- cL_c}\big)^{t} \frac{2D_c}{(\sqrt{1-cL_c})^{T_c}}, \forall t\geq T_c ,
\end{align*}
where $T_c$ is defined in \eqref{eq:Tc}. 
\end{Cor}

\begin{IEEEproof}~
\begin{align*}
&\Vert \boldsymbol{\lambda}(2t) - \boldsymbol{\lambda}(t)\Vert \\
\leq &\Vert \boldsymbol{\lambda}(2t) - \boldsymbol{\lambda}^\ast\Vert + \Vert \boldsymbol{\lambda}(t) - \boldsymbol{\lambda}^\ast\Vert  \\
\overset{(a)}{\leq} &\big(\sqrt{1-cL_c}\big)^{2t} \frac{D_c}{(\sqrt{1-cL_c})^{T_c}}+ \big(\sqrt{1-cL_c}\big)^{t} \frac{D_c}{(\sqrt{1- cL_c})^{T_c}}  \\
\overset{(b)}{\leq} &\big(\sqrt{1-cL_c}\big)^{t} \frac{2D_c}{(\sqrt{1-cL_c})^{T_c}}
\end{align*}
where $(a)$ follows from part (2) of Lemma \ref{lm:local-strongly-concave-arrive-time} and $(b)$ follows from the fact that $\sqrt{1 - cL_{c}} < 1$.
\end{IEEEproof}
\begin{Thm}\label{thm:locally-strongly-concave-utility-convergence-time}
Assume problem \eqref{eq:program-objective}-\eqref{eq:program-set-constraint} satisfies Assumptions \ref{as:strongly-convex-problem}-\ref{as:strong-duality} and \ref{as:dual-locally-stronlgy-concave}. Let $\mathbf{x}^\ast \in \mathcal{X}$ be the optimal solution and $\boldsymbol{\lambda}^\ast$ be the Lagrange multiplier defined in Assumption \ref{as:strong-duality}.  If $c \leq \frac{\alpha}{\beta^2}$ in Algorithm \ref{alg:dual-subgradient}, then
\begin{align*}
f(\widetilde{\mathbf{x}}(2t)) \leq  f(\mathbf{x}^\ast) + \frac{1}{t}\left(\sqrt{1- cL_c}\right)^{t} \eta_c,\quad \forall t\geq T_c,
\end{align*}
where $\eta_c =  \frac{1}{c}\frac{2D_c^2}{(\sqrt{1-cL_c})^{2T_c}} + \frac{1}{c}\frac{2 D_c \big(\sqrt{\Vert \boldsymbol{\lambda}(0)\Vert^2 + \Vert \boldsymbol{\lambda}^\ast\Vert^2} + \Vert \boldsymbol{\lambda}^\ast\Vert\big)}{ (\sqrt{1-cL_c})^{T_c}}$ is a fixed constant and $T_c$ is defined in \eqref{eq:Tc}. 
\end{Thm}
\begin{IEEEproof}
Fix $t  \geq T_c$. By Lemma \ref{lm:drift-plus-penalty}, we have $\frac{1}{c}\Delta(\tau) + f(\mathbf{x}(\tau)) \leq   f(\mathbf{x}^\ast)$ for all $\tau \in\{0,1,\ldots\}$. Summing over $\tau \in \{t,t+1,\ldots,2t-1\}$ we have:
\begin{align}
&\frac{1}{c}\sum_{\tau=t}^{2t-1} \Delta(\tau) + \sum_{\tau=t}^{2t-1} f(\mathbf{x}(\tau)) \leq t f(\mathbf{x}^\ast) \nonumber \\
\Rightarrow\quad& \frac{1}{c}[L(2t) - L(t)] + \sum_{\tau=t}^{2t-1} f(\mathbf{x}(\tau)) \leq t f(\mathbf{x}^\ast)  \nonumber\\
\Rightarrow\quad& \frac{1}{t}\sum_{\tau=t}^{2t-1} f(\mathbf{x}(\tau)) \leq f(\mathbf{x}^\ast) + \frac{L(t) - L(2t)}{ct}  \label{eq:proof-thm8-eq1}
\end{align}
Thus, we have
\begin{align*}
&f(\widetilde{\mathbf{x}}(2t))\overset{(a)}{\leq} \frac{1}{t}\sum_{\tau=t}^{2t-1} f(\mathbf{x}(\tau)) \overset{(b)}{\leq} f(\mathbf{x}^\ast) + \frac{ L(t) - L(2t)}{ct} \\
= &f(\mathbf{x}^\ast) + \frac{\Vert \boldsymbol{\lambda}(t)\Vert^2 - \Vert\boldsymbol{\lambda}(2t)\Vert^2}{2ct}\\
=& f(\mathbf{x}^\ast) + \frac{\Vert \boldsymbol{\lambda}(t) - \boldsymbol{\lambda}(2t) + \boldsymbol{\lambda}(2t)\Vert^2 - \Vert \boldsymbol{\lambda}(2t)\Vert^2}{2ct} \\
= &f(\mathbf{x}^\ast) + \frac{\Vert \boldsymbol{\lambda}(t) - \boldsymbol{\lambda}(2t)\Vert^2 + 2[\boldsymbol{\lambda}(2t)]^T [\boldsymbol{\lambda}(t) - \boldsymbol{\lambda}(2t)]}{2ct} \\
\overset{(c)}{\leq}& f(\mathbf{x}^\ast) + \frac{ \Vert \boldsymbol{\lambda}(t) - \boldsymbol{\lambda}(2t)\Vert^2 + 2 \Vert \boldsymbol{\lambda}(2t)\Vert \Vert\boldsymbol{\lambda}(t) - \boldsymbol{\lambda}(2t)\Vert}{2ct} \\
\overset{(d)}{\leq}& f(\mathbf{x}^\ast) +  \frac{\Big(\big(\sqrt{1-cL_c}\Big)^{t} \frac{2D_c}{(\sqrt{1-cL_c})^{T_c}}\Big)^2}{2ct} +  \frac{2 \Big(\big(\sqrt{1-cL_c}\big)^{t} \frac{2D_c}{(\sqrt{1-cL_c})^{T_c}}\Big)\Vert \boldsymbol{\lambda}(2t)\Vert}{2ct} \\
\overset{(e)}{\leq}& f(\mathbf{x}^\ast) + \frac{1}{t}\frac{\Big(\sqrt{1-cL_c}\Big)^{t} \Big( \frac{2D_c^2}{(\sqrt{1-cL_c})^{2T_c}} + \frac{2 D_c \Vert \boldsymbol{\lambda}(2t)\Vert}{(\sqrt{1-cL_c})^{T_c}} \Big)}{c} \\
 \overset{(f)}{=}& f(\mathbf{x}^\ast) + \frac{1}{t}\big(\sqrt{1-cL_c}\big)^{t}\eta_c
\end{align*} 
where (a) follows from the fact that $\widetilde{\mathbf{x}}(2t) = \frac{1}{t}\sum_{\tau=t}^{2t-1} \mathbf{x}(\tau)$ and  the convexity of $f(\mathbf{x})$; (b) follows from \eqref{eq:proof-thm8-eq1}; (c) follows from the Cauchy-Schwarz inequality; (d) is true because $$\Vert \boldsymbol{\lambda}(2t) - \boldsymbol{\lambda}(t)\Vert \leq \big(\sqrt{1- cL_c}\big)^{t} \frac{2D_c}{(\sqrt{1-cL_c})^{T_c}}, \forall t\geq T_c$$ by Corollary \ref{cor:locally-strongly-concave-dual-distance-bound}; (e) follows from the fact that $\sqrt{1-cL_c} < 1$ and (f) follows from the fact that $\Vert \boldsymbol{\lambda}(2t)\Vert \leq \sqrt{\Vert \boldsymbol{\lambda}(0)\Vert^2 +\Vert \boldsymbol{\lambda}^\ast\Vert^2} + \Vert \boldsymbol{\lambda}^\ast\Vert$ by Lemma \ref{lm:queue-bound} and the definition of $\eta_c$.
\end{IEEEproof}

\begin{Thm}\label{thm:locally-strongly-concave-constraint-convergence-time}
Assume problem \eqref{eq:program-objective}-\eqref{eq:program-set-constraint} satisfies Assumptions \ref{as:strongly-convex-problem}-\ref{as:strong-duality} and \ref{as:dual-locally-stronlgy-concave}. If $c \leq \frac{\alpha}{\beta^2}$ in Algorithm \ref{alg:dual-subgradient}, then $$g_k (\widetilde{\mathbf{x}}(2t)) \leq \frac{1}{t}\big(\sqrt{1-cL_c}\big)^{t} \frac{2D_c}{c(\sqrt{1-cL_c})^{T_c}} , \forall k\in\{1,2,\ldots,m\}, \forall t\geq T_c,$$ where $T_c$ is defined in \eqref{eq:Tc}. 
\end{Thm}
\begin{IEEEproof}
Fix $t\geq T_c$ and $k\in\{1,2,\ldots,m\}$. Thus, we have
\begin{align*}
g_k(\widetilde{\mathbf{x}}(2t)) &\overset{(a)}{\leq} \frac{1}{t} \sum_{\tau=t}^{2t-1} g_k(\mathbf{x}(\tau)) \\
&\overset{(b)}{\leq} \frac{1}{ct} [\lambda_k(2t) - \lambda_k(t)] \\
&\leq \frac{1}{ct} \Vert \boldsymbol{\lambda}(2t) - \boldsymbol{\lambda}(t)\Vert \\
&\overset{(c)}{\leq} \frac{1}{t}\big(\sqrt{1-cL_c}\big)^{t} \frac{2D_c}{c(\sqrt{1-cL_c})^{T_c}}
\end{align*}
where (a) follows from the convexity of $g_k(\cdot)$; (b) follows from Lemma \ref{lm:queue_constraint_inequality}; and (c) follows from Corollary \ref{cor:locally-strongly-concave-dual-distance-bound}.
\end{IEEEproof}

Under Assumptions \ref{as:strongly-convex-problem}-\ref{as:strong-duality} and \ref{as:dual-locally-stronlgy-concave}, Theorems \ref{thm:locally-strongly-concave-utility-convergence-time} and \ref{thm:locally-strongly-concave-constraint-convergence-time} show that if $c \leq \frac{\alpha}{\beta^2}$,  then $\widetilde{\mathbf{x}}(t)$ provides an $\epsilon$-approximiate solution with convergence time $O(\log(\frac{1}{\epsilon}))$. Since $L_q = \frac{L_c}{2}$ by Lemma \ref{lm:strongly-concave-imply-locally-quadratic} and note that $\sqrt{1-cL_c} \leq \frac{1}{\sqrt{1+2cL_q}}$, the geometric contraction modulus shown in Theorems \ref{thm:locally-strongly-concave-utility-convergence-time} and \ref{thm:locally-strongly-concave-constraint-convergence-time} under Assumption \ref{as:dual-locally-stronlgy-concave} is smaller than the geometric contraction modulus shown in Theorems \ref{thm:local-quadratic-utility-convergence-time} and \ref{thm:local-quadratic-constraint-convergence-time} under Assumption \ref{as:dual-locally-quadratic}.

The next theorem summarizes both Theorem \ref{thm:locally-strongly-concave-utility-convergence-time} and Theorem \ref{thm:locally-strongly-concave-constraint-convergence-time}.
\begin{Thm}
Assume problem \eqref{eq:program-objective}-\eqref{eq:program-set-constraint} satisfies Assumptions \ref{as:strongly-convex-problem}-\ref{as:strong-duality} and \ref{as:dual-locally-stronlgy-concave}. Let $\mathbf{x}^\ast \in \mathcal{X}$ be the optimal solution and $\boldsymbol{\lambda}^\ast \geq \mathbf{0}$ be the Lagrange multiplier defined in Assumption \ref{as:strong-duality}.   If $c \leq \frac{\alpha}{\beta^2}$ in Algorithm \ref{alg:dual-subgradient}, then for all $t\geq T_c$, 
\begin{align*}
f(\tilde{\mathbf{x}}(2t)) \leq  f(\mathbf{x}^\ast) + \frac{1}{t}\big(\sqrt{1-cL_c}\big)^{t} \eta_c,
\end{align*}
\begin{align*}
g_k (\widetilde{\mathbf{x}}(2t)) \leq \frac{1}{t}\big(\sqrt{1-cL_c}\big)^{t} \frac{2D_c}{c(\sqrt{1-cL_c})^{T_c}} , \forall k\in\{1,2,\ldots,m\}, 
\end{align*}
where $\eta_c =  \frac{1}{c}\frac{2D_c^2}{(\sqrt{1-cL_c})^{2T_c}} + \frac{1}{c}\frac{2 D_c \big(\sqrt{\Vert \boldsymbol{\lambda}(0)\Vert^2 + \Vert \boldsymbol{\lambda}^\ast\Vert^2} + \Vert \boldsymbol{\lambda}^\ast\Vert\big)}{ (\sqrt{1-cL_c})^{T_c}}$ is a fixed constant and $T_c$ is defined in \eqref{eq:Tc}. In summary,  if $c\leq \frac{\alpha}{\beta^2}$ in Algorithm \ref{alg:dual-subgradient}, then $\widetilde{\mathbf{x}}(2t)$ ensures error decays like $O\big(\frac{1}{t}(\sqrt{1-cL_c})^{t}\big)$ and provides an $\epsilon$-approximiate solution with convergence time $O(\log(\frac{1}{\epsilon}))$.
\end{Thm}

\subsection{Discussions}

\subsubsection{Practical Implementations}
Assumptions \ref{as:dual-locally-quadratic} and \ref{as:dual-locally-stronlgy-concave} in general are difficult to verify. However, we note that to ensure $\widetilde{\mathbf{x}}(t)$ provides the better $O(\log(\frac{1}{\epsilon}))$ convergence time, we only require $c\leq\frac{\alpha}{\beta^2}$, which is independent of the parameters in Assumptions \ref{as:dual-locally-quadratic} or \ref{as:dual-locally-stronlgy-concave}.  Namely, in practice, we can blindly apply Algorithm \ref{alg:dual-subgradient} to problem \eqref{eq:program-objective}-\eqref{eq:program-set-constraint} with no need to verify Assumption \ref{as:dual-locally-quadratic} or \ref{as:dual-locally-stronlgy-concave}.  If problem \eqref{eq:program-objective}-\eqref{eq:program-set-constraint} happens to satisfy Assumptions \ref{as:dual-locally-quadratic} or \ref{as:dual-locally-stronlgy-concave}, then $\widetilde{\mathbf{x}}(t)$ enjoys the faster  convergence time $O(\log(\frac{1}{\epsilon}))$. If not,  then $\widetilde{\mathbf{x}}(t)$ (or $\overline{\mathbf{x}}(t)$) at least has convergence time $O(\frac{1}{\epsilon})$. 

\subsubsection{Local Assumption and Local Geometric Convergence}
Since Assumption \ref{as:dual-locally-quadratic} only requires the ``quadratic" property to be satisfied in a local radius $D_{q}$ around $\boldsymbol{\lambda}^\ast$, the error of Algorithm \ref{alg:dual-subgradient} starts to decay like $O\Big(\frac{1}{t} \big(\frac{1}{\sqrt{1+2cL_q}}\big)^t\Big)$ only after $\lambda(t)$ arrives at the $D_{q}$ local radius after $T_{q}$ iterations, where $T_{q}$ is independent of the approximation requirement $\epsilon$ and hence is order $O(1)$. Thus, Algorithm \ref{alg:dual-subgradient} provides an $\epsilon$-approximate solution  with convergence time $O(\log(\frac{1}{\epsilon})$. However, it is possible that $T_{q}$ is relatively large if $D_q$ is small. 

In fact, $T_{q} > 0$ because Assumption \ref{as:dual-locally-quadratic} only requires the dual function to have the ``quadratic'' property in a local radius. Thus, the theory developed in this section can deal with a large class of problems. On the other hand, if the dual function has the ``quadratic'' property globally, i.e., for all $\boldsymbol{\lambda}\geq \mathbf{0}$, then $T_{q} = 0$ and the error of Algorithm \ref{alg:dual-subgradient} decays like $O\Big(\frac{1}{t} \big(\frac{1}{\sqrt{1+2cL_q}}\big)^t\Big), \forall t\geq 1$.

A similar tradeoff holds with respect to Assumption \ref{as:dual-locally-stronlgy-concave}.

\section{Applications}
\subsection{Convex Programs Satisfying Non-Degenerate Constraint Qualifications}
\begin{Thm}\label{thm:strongly-concave-dual-fucntion}
Consider the following strongly convex program:
\begin{align}
\min \quad &f(\mathbf{x}) \\
\text{s.t.} \quad  &  g_k(\mathbf{x})\leq 0, \forall k\in\{1,2,\ldots,m\} \\
			 &  \mathbf{x}\in \mathbb{R}^n
\end{align}
where $f(\mathbf{x})$ is a second-order continuously differentiable and strongly convex function; $g_k(\mathbf{x}), \forall k \in\{1,2,\ldots,m\}$ are Lipschitz continuous, second-order continuously differentiable and convex functions. Let $\mathbf{x}^\ast$ be the unique solution to this strongly convex program. 
\begin{enumerate}
\item Let $\mathcal{K}\subseteq\{1,2,\ldots,m\}$ be the set of active constraints, i.e., $g_k(\mathbf{x}^\ast) = 0, \forall k\in\mathcal{K}$, and denote the vector composed by $g_k(\mathbf{x}), k\in \mathcal{K}$ as $\mathbf{g}_\mathcal{K}$. If $\mathbf{g}(\mathbf{x})$ has a bounded Jacobian and 
$\text{rank}(\nabla_{\mathbf{x}} \mathbf{g}_{\mathcal{K}}(\mathbf{x}^\ast)^{T}) = |\mathcal{K}|$, then Assumptions \ref{as:strongly-convex-problem}-\ref{as:dual-locally-quadratic} hold for this problem. 
\item If $\mathbf{g}(\mathbf{x})$ has a bounded Jacobian and 
$\text{rank}(\nabla_{\mathbf{x}} \mathbf{g}(\mathbf{x}^\ast)^{T}) = m$, then Assumptions \ref{as:strongly-convex-problem}-\ref{as:dual-locally-stronlgy-concave} hold for this problem. 
\end{enumerate}
 
\end{Thm}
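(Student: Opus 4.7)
The plan is to verify, under each set of hypotheses, the assumptions that underpin the convergence results of Section \ref{sec:extensions}, and then invoke the theorems already proved there. Assumption \ref{as:strongly-convex-problem} is immediate: strong convexity of $f$ is given, and Lipschitz continuity of each $g_k$ follows from the bounded Jacobian via the mean value theorem. Assumption \ref{as:strong-duality} (strong duality) follows from the rank condition at $\mathbf{x}^\ast$, which is LICQ on the active set: the classical KKT theorem yields a multiplier $\boldsymbol{\lambda}^\ast\geq\mathbf{0}$ satisfying stationarity and complementary slackness, and convexity then upgrades this into strong duality (the KKT point $\mathbf{x}^\ast$ minimizes $L(\cdot,\boldsymbol{\lambda}^\ast)$, and complementary slackness gives $q(\boldsymbol{\lambda}^\ast) = f(\mathbf{x}^\ast)$). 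Assumption \ref{as:dual-smooth} is a direct application of Lemma \ref{lm:Necoara}.

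The core analytic step is a second-order expansion of the dual function $q$ at $\boldsymbol{\lambda}^\ast$. Strong convexity of $f$ together with $\boldsymbol{\lambda}\geq\mathbf{0}$ makes the Lagrangian $L(\mathbf{x},\boldsymbol{\lambda}) = f(\mathbf{x})+\sum_k\lambda_k g_k(\mathbf{x})$ strongly convex in $\mathbf{x}$, so the primal minimizer $\mathbf{x}(\boldsymbol{\lambda})$ is unique and, by the implicit function theorem applied to $\nabla_{\mathbf{x}} L=\mathbf{0}$, continuously differentiable in $\boldsymbol{\lambda}$. The envelope theorem gives $\nabla q(\boldsymbol{\lambda})=\mathbf{g}(\mathbf{x}(\boldsymbol{\lambda}))$, and differentiating again yields
\begin{align*}
\nabla^2 q(\boldsymbol{\lambda}) = -\nabla_{\mathbf{x}}\mathbf{g}(\mathbf{x}(\boldsymbol{\lambda}))^{T}\bigl[\nabla_{\mathbf{x}}^{2} L(\mathbf{x}(\boldsymbol{\lambda}),\boldsymbol{\lambda})\bigr]^{-1}\nabla_{\mathbf{x}}\mathbf{g}(\mathbf{x}(\boldsymbol{\lambda})),
\end{align*}
which is negative semidefinite and continuous in $\boldsymbol{\lambda}$. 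For part (2), the full-rank hypothesis $\mathrm{rank}(\nabla_{\mathbf{x}}\mathbf{g}(\mathbf{x}^\ast))=m$ makes $\nabla^2 q(\boldsymbol{\lambda}^\ast)\prec 0$, and continuity keeps $\nabla^2 q$ uniformly negative definite on some ball $\{\|\boldsymbol{\lambda}-\boldsymbol{\lambda}^\ast\|\leq D_c\}\cap\mathbb{R}^m_+$, yielding strong concavity with some modulus $L_c>0$ there; this establishes Assumption \ref{as:dual-locally-stronlgy-concave}.

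For part (1), write $\mathbf{v}=\boldsymbol{\lambda}-\boldsymbol{\lambda}^\ast$ and split into active coordinates $\mathbf{v}_{\mathcal{K}}$ and inactive coordinates $\mathbf{v}_{\mathcal{K}^{c}}$. Complementary slackness forces $\lambda_k^\ast=0$ for $k\in\mathcal{K}^{c}$, so for any $\boldsymbol{\lambda}\in\mathbb{R}^m_+$ we have $v_k=\lambda_k\geq 0$ on $\mathcal{K}^{c}$. The Taylor expansion at $\boldsymbol{\lambda}^\ast$ then reads
\begin{align*}
q(\boldsymbol{\lambda}^\ast)-q(\boldsymbol{\lambda}) = \sum_{k\in\mathcal{K}^{c}}|g_k(\mathbf{x}^\ast)|\,v_k + \tfrac{1}{2}\mathbf{v}^{T} M\mathbf{v} + o(\|\mathbf{v}\|^{2}),
\end{align*}
where $M=-\nabla^2 q(\boldsymbol{\lambda}^\ast)\succeq 0$ and the active first-order terms vanish because $g_k(\mathbf{x}^\ast)=0$ for $k\in\mathcal{K}$. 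The LICQ hypothesis on the active constraints makes the principal block $M_{\mathcal{K}\mathcal{K}} = \nabla_{\mathbf{x}}\mathbf{g}_{\mathcal{K}}(\mathbf{x}^\ast)^{T}[\nabla_{\mathbf{x}}^{2}L]^{-1}\nabla_{\mathbf{x}}\mathbf{g}_{\mathcal{K}}(\mathbf{x}^\ast)$ positive definite, and a routine AM--GM estimate of the cross terms gives $\mathbf{v}^{T}M\mathbf{v}\geq\mu\|\mathbf{v}_{\mathcal{K}}\|^{2}-C\|\mathbf{v}_{\mathcal{K}^{c}}\|^{2}$ for constants $\mu,C>0$. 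Combining this with the linear lower bound $c\|\mathbf{v}_{\mathcal{K}^{c}}\|$, where $c=\min_{k\in\mathcal{K}^{c}}|g_k(\mathbf{x}^\ast)|>0$, and choosing $D_q$ small enough that the linear inactive term dominates both the $-C\|\mathbf{v}_{\mathcal{K}^{c}}\|^{2}$ contribution and the $o(\|\mathbf{v}\|^{2})$ remainder, yields $q(\boldsymbol{\lambda}^\ast)-q(\boldsymbol{\lambda})\geq L_q\|\mathbf{v}\|^{2}$ for some $L_q>0$; this is Assumption \ref{as:dual-locally-quadratic}. The main obstacle is the mixing between active and inactive blocks in the quadratic form and the balancing of the faster linear inactive decay against the slower quadratic active one; once all assumptions are established, the convergence time conclusions in parts (1) and (2) follow directly from the theorems of Section \ref{sec:extensions}.
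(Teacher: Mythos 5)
Your proposal is correct and follows essentially the same route as the paper: verify Assumptions \ref{as:strongly-convex-problem}--\ref{as:dual-smooth} directly (LICQ for strong duality, Lemma \ref{lm:Necoara} for smoothness), use the formula $\nabla^2_{\boldsymbol{\lambda}} q(\boldsymbol{\lambda}^\ast) = -\nabla_{\mathbf{x}}\mathbf{g}(\mathbf{x}^\ast)^T[\nabla^2_{\mathbf{x}}L]^{-1}\nabla_{\mathbf{x}}\mathbf{g}(\mathbf{x}^\ast)$, and for part (1) run a second-order Taylor expansion splitting $\boldsymbol{\lambda}-\boldsymbol{\lambda}^\ast$ into active and inactive blocks so that the strictly negative gradient entries give a dominating linear decay on the inactive block while LICQ gives quadratic decay on the active block, and for part (2) deduce local strong concavity from $\nabla^2_{\boldsymbol{\lambda}}q(\boldsymbol{\lambda}^\ast)\prec 0$ and continuity of the Hessian. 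This mirrors the paper's Lemmas \ref{lm:sufficient-condition-locally-quadratic} and \ref{lm:negative-definite-hessian-imply-strongly-concave} (your explicit AM--GM treatment of the active/inactive cross terms is in fact a bit more careful than the paper's expansion, which silently drops them).
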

\begin{IEEEproof}
See Appendix \ref{app:strongly-concave-dual-fucntion} for the detailed proof. 
\end{IEEEproof}

\begin{Cor}\label{cor:locally-quadratic-independent-linear-inequality}
Consider the following strongly convex program with linear inequality constraints:
\begin{align}
\min \quad &f(\mathbf{x}) \\
\text{s.t.} \quad  &  \mathbf{A} \mathbf{x} \leq \mathbf{b} 
\end{align}
where $f(\mathbf{x})$ is second-order continuously differentiable and strongly convex function; and $\mathbf{A}$ is an $m\times n$ matrix.
\begin{enumerate}
\item Let $\mathbf{x}^\ast$ be the optimal solution. Assume $\mathbf{A}\mathbf{x}^{\ast} \leq \mathbf{b}$ has $l$ rows that hold with equality, and let $\mathbf{A}^\prime$ be the $l \times  n$ submatrix of $\mathbf{A}$ corresponding to these ``active" rows. If $\text{rank}(\mathbf{A}^\prime) = l$, then Assumptions \ref{as:strongly-convex-problem}-\ref{as:dual-locally-quadratic} hold for this problem. 
\item If $\text{rank}(\mathbf{A}) = m$, then Assumptions \ref{as:strongly-convex-problem}-\ref{as:dual-locally-stronlgy-concave} hold for this problem with $D_c = \infty$. 
\end{enumerate}
\end{Cor}

\subsection{Network Utility Maximizations with Independent Link Capacity Constraints} \label{sec:num}
Consider a network with $l$ links and $n$ flow streams. Let $\{b_{1}, b_{2},\ldots, b_{l}\}$ be the capacities of each link and $\{x_{1}, x_{2},\ldots, x_{n}\}$ be the rates of each flow stream. Let $\mathcal{N}(k) \subseteq {1,2,\ldots,n}, 1\leq k\leq l$ be the set of flow streams that use link $k$. This problem is to maximize the utility function $\sum_{i=1}^{n} w_{i} \log(x_{i})$ with $w_{i}>0, \forall 1\leq i\leq n$, which represents a measure of network fairness \cite{Kelly97EuropeanTT}, subject to the capacity constraint of each link. This problem is known as the network utility maximization (NUM) problem and can be formulated as follows\footnote{In this paper, the NUM problem is always formulated as a minimization problem. Without loss of optimality, we define $\log(0) = -\infty$ and hence $\log(\cdot)$ is defined over $\mathbb{R}_{+}$. Or alternatively, we can replace the non-negative rate constraints with $x_{i} \geq x_{i}^{\min}, \forall i\in \{1,2,\ldots,n\}$ where $x_{i}^{\min}, \forall i\in \{1,2,\ldots,n\}$ are sufficiently small positive numbers.}:
\begin{align*}
\min \quad & ~\sum_{i=1}^{n} -w_{i}\log(x_{i}) \\
\text{s.t.} \quad  & \sum_{i\in \mathcal{N}(k)} x_{i} \leq b_{k}, \forall k\in\{1,2,\ldots,l\}\\
& x_{i} \geq 0, \forall i\in\{1,2,\ldots,n\}
\end{align*}
Typically, many link capacity constraints in the above formulation are redundant, e.g., if $\mathcal{N} (k_{1}) = \mathcal{N}(k_{2})$ and $b_{k_{1}} \leq b_{k_{2}}$, then the capacity constraint of the $k_{2}$-th link is redundant.  Assume that redundant link capacity constraints are eliminated and the remaining links are reindexed. The above formulation can be rewritten as follows:
\begin{align}
\min \quad & \sum_{i=1}^{n} -w_{i}\log(x_{i}) \label{eq:NUM-reduced-object}\\
\text{s.t.} \quad  & \mathbf{A}\mathbf{x} \leq \mathbf{b}  \label{eq:NUM-reduced-capacity-constraint}\\
& \mathbf{x} \geq \mathbf{0} \label{eq:NUM-reduced-rate-constraint}
\end{align}
where $w_{i}>0, \forall 1\leq i\leq n$; $\mathbf{A} = [\mathbf{a}_{1}, \cdots, \mathbf{a}_{n}]$ is a $0$-$1$ matrix of size $m\times n$ such that $a_{ij}=1$ if and only if flow $x_j$ uses link $i$; and $\mathbf{b}>\mathbf{0}$.

Note that problem  \eqref{eq:NUM-reduced-object}-\eqref{eq:NUM-reduced-rate-constraint} satisfies Assumptions \ref{as:strongly-convex-problem} and \ref{as:strong-duality}. By the results from Section \ref{sec:general-strongly-convex},  $\overline{\mathbf{x}}(t)$ has $O(\frac{1}{\epsilon})$ convergence time for this problem. The next theorem provides sufficient conditions such that $\widetilde{\mathbf{x}}(t)$ can have better convergence time $O(\log(\frac{1}{\epsilon}))$ .

\begin{Thm}\label{thm:NUM-locally-quadratic} The network utility maximization problem \eqref{eq:NUM-reduced-object}-\eqref{eq:NUM-reduced-rate-constraint} has the following properties:
\begin{enumerate}
\item Let $b^{\max} = \max_{1\leq i\leq n} b_{i}$ and $\mathbf{x}^{\max}>\mathbf{0}$ such that $x_{i}^{\max} > b^{\max}, \forall i\in \{1,\ldots,n\}$. The network utility maximization problem \eqref{eq:NUM-reduced-object}-\eqref{eq:NUM-reduced-rate-constraint} is equivalent to the following problem
\begin{align}
\min \quad & \sum_{i=1}^{n} -w_{i}\log(x_{i}) \label{eq:NUM-compact-object} \\
\text{s.t.} \quad  & \mathbf{A}\mathbf{x} \leq \mathbf{b}  \label{eq:NUM-compact-capacity-constraint}\\
& \mathbf{0} \leq \mathbf{x} \leq \mathbf{x}^{\max} \label{eq:NUM-compact-rate-constraint}
\end{align}
\item Let $\mathbf{x}^\ast$ be the optimal solution. Assume $\mathbf{A}\mathbf{x}^{\ast} \leq \mathbf{b}$ has $m^\prime$ rows that hold with equality, and let $\mathbf{A}^\prime$ be the $m^\prime \times  n$ submatrix of $\mathbf{A}$ corresponding to these ``active" rows. If $\text{rank}(\mathbf{A}^\prime) = m^\prime$,  then Assumptions \ref{as:strongly-convex-problem}-\ref{as:dual-locally-quadratic} hold for  problem \eqref{eq:NUM-compact-object}-\eqref{eq:NUM-compact-rate-constraint}. 
\item If $\text{rank}(\mathbf{A}) = m$, then Assumptions \ref{as:strongly-convex-problem}-\ref{as:dual-locally-stronlgy-concave} hold for problem \eqref{eq:NUM-compact-object}-\eqref{eq:NUM-compact-rate-constraint}. 
\end{enumerate}

\end{Thm}
\begin{IEEEproof}
See Appendix \ref{app:NUM-locally-quadratic} for the detailed proof.
\end{IEEEproof}

\begin{Rem}
Theorem \ref{thm:NUM-locally-quadratic} and Corollary \ref{cor:locally-quadratic-independent-linear-inequality} complement each other. If $\text{rank}(\mathbf{A}) = m$, we can apply Theorem \ref{thm:NUM-locally-quadratic} to problem \eqref{eq:NUM-reduced-object}-\eqref{eq:NUM-reduced-rate-constraint}. However, to apply Corollary \ref{cor:locally-quadratic-independent-linear-inequality}, we require $\text{rank}(\mathbf{B}) = m+n$, where $\mathbf{B} = \left[\begin{array}{c}\mathbf{A} \\ \mathbf{I}_{n}\end{array}\right]$. This is always false since the size of $\mathbf{A}^{\prime}$ is $(m+n)\times n$.  Thus, Corollary \ref{cor:locally-quadratic-independent-linear-inequality} can not be applied to problem \eqref{eq:NUM-reduced-object}-\eqref{eq:NUM-reduced-rate-constraint} even if $\text{rank}(\mathbf{A}) = m$. On the other hand, Corollary \ref{cor:locally-quadratic-independent-linear-inequality} considers general utilities while Theorem \ref{thm:NUM-locally-quadratic} is restricted to the utility $\sum_{i=1}^{n}  -w_{i} \log(x_{i})$.
\end{Rem}

Now we give an example of network utility maximization such that Assumption \ref{as:dual-locally-quadratic} is not satisfied. Consider the problem  \eqref{eq:NUM-reduced-object}-\eqref{eq:NUM-reduced-rate-constraint} with $\mathbf{w} = [1,1,1,1]^{T}$, $$\mathbf{A} = [\mathbf{a}_{1}, \mathbf{a}_{2}, \mathbf{a}_{3}, \mathbf{a}_{4}] = \left[\begin{array}{cccc} 1 & 1& 0 &0 \\ 0 & 0 & 1 & 1\\ 1 & 0 &1 &0\\ 0 &1 &0 &1\end{array}\right]$$ and $\mathbf{b} = [3,7,2,8]^{T}$. Note that $\text{rank}(\mathbf{A}) = 3 < m$; and $ \boldsymbol{\mu}^{T}\mathbf{A} = [0, 0, 0, 0]$ and $\boldsymbol{\mu}^{T}\mathbf{b} = 0$ if $\boldsymbol{\mu} = [1, 1, -1, -1]^{T}$.

It can be checked that the optimal solution to this NUM problem is $[ x_{1}^{\ast}, x_{2}^{\ast}, x_{3}^{\ast}, x_{4}^{\ast}]^{T} = [ 0.8553, 2.1447, 1.1447, 5.8553 ]^{T}$. Note that all capacity constraints are tight and $[ \lambda_{1}^{\ast}, \lambda_{2}^{\ast}, \lambda_{3}^{\ast}, \lambda_{4}^{\ast}]^{T} =[0.3858, 0.0903, 0.7833, 0.0805]^T$ is the optimal dual variable that attains the strong duality.

Next, we show that $q(\boldsymbol{\lambda})$ is not locally quadratic at $\boldsymbol{\lambda} = \boldsymbol{\lambda}^{\ast}$ by contradiction.  Assume that there exists $D_q>0$ and $L_{q} > 0$ such that $q(\boldsymbol{\lambda}) \leq q(\boldsymbol{\lambda}^{\ast}) - L_{q} \Vert \boldsymbol{\lambda} -\boldsymbol{\lambda}^{\ast}\Vert^{2}$ for any $\boldsymbol{\lambda}\in \mathbb{R}^{m}_{+}$ and $\Vert \boldsymbol{\lambda} - \boldsymbol{\lambda}^{\ast}\Vert\leq D_q$.  Put $\boldsymbol{\lambda} = \boldsymbol{\lambda}^{\ast} + t \boldsymbol{\mu}$ with $|t|$ sufficiently small such that  $\boldsymbol{\lambda}^{\ast} + t \boldsymbol{\mu} \in \mathbb{R}^{m}_{+}$ and $\Vert \boldsymbol{\lambda}^{\ast} + t \boldsymbol{\mu} - \boldsymbol{\lambda}^{\ast} \Vert <D_q $.  Note that by \eqref{eq:NUM-dual-function-gradient} and \eqref{eq:NUM-dual-function-hessian}, we have
\begin{align}
\boldsymbol{\mu}^{T} \nabla_{\boldsymbol{\lambda}} q(\boldsymbol{\lambda}^{\ast}) = \sum_{i=1}^{n} \frac{\boldsymbol{\mu}^{T}\mathbf{a}_{i}}{(\boldsymbol{\lambda}^{\ast})^{T}\mathbf{a}_{i}} + \boldsymbol{\mu}^{T}\mathbf{b} = 0, \label{eq:NUM-example-zero-first-order-term}\\
\boldsymbol{\mu}^{T} \nabla_{\boldsymbol{\lambda}}^{2} q(\boldsymbol{\lambda}^{\ast}) \boldsymbol{\mu} = 0.\label{eq:NUM-example-zero-second-order-term}
\end{align}

Thus, we have 
\begin{align*}
&q(\boldsymbol{\lambda}^{\ast} + t \boldsymbol{\mu}) \\
\overset{(a)}{=} &q(\boldsymbol{\lambda}^{\ast}) + t\boldsymbol{\mu}^{T} \nabla_{\boldsymbol{\lambda}} \tilde{q}(\boldsymbol{\lambda}^{\ast}) + t^{2}\boldsymbol{\mu}^{T} \nabla_{\boldsymbol{\lambda}}^{2} \tilde{q}(\boldsymbol{\lambda}^{\ast})\boldsymbol{\mu} + o(t^{2}\Vert \boldsymbol{\mu} \Vert^{2}) \\
\overset{(b)}{=} &q(\boldsymbol{\lambda}^{\ast}) +  o(t^{2}\Vert \boldsymbol{\mu} \Vert^{2})
\end{align*}
where $(a)$ follows from the second-order Taylor's expansion and $(b)$ follows from equations \eqref{eq:NUM-example-zero-first-order-term} and \eqref{eq:NUM-example-zero-second-order-term}. By definition of $o(t^{2}\Vert \boldsymbol{\mu} \Vert^{2})$, there exists $\delta>0$ such that $\frac{|o(t^{2}\Vert \boldsymbol{\mu} \Vert^{2})|}{\Vert t\boldsymbol{\mu}\Vert^{2}} <  L_{q}, \forall t\in (-\delta, \delta)$, i.e., $o(t^{2}\Vert \boldsymbol{\mu} \Vert^{2}) > -L_{q} \Vert t\boldsymbol{\mu}\Vert^{2}, \forall t\in (-\delta, \delta)$. This implies $q(\boldsymbol{\lambda}^{\ast} + t \boldsymbol{\mu}) = q(\boldsymbol{\lambda}^{\ast}) +  o(t^{2}\Vert \boldsymbol{\mu} \Vert^{2}) > q(\boldsymbol{\lambda}^{\ast}) - L_{q} \Vert t\boldsymbol{\mu}\Vert^{2}$. A contradiction! Thus, $q(\boldsymbol{\lambda})$ is not locally quadratic at $\boldsymbol{\lambda} = \boldsymbol{\lambda}^{\ast}$.

In view of the above example, the sufficient condition in part (2) of Theorem \ref{thm:NUM-locally-quadratic} for Assumption \ref{as:dual-locally-quadratic} is sharp.

\section{Numerical Results}
\subsection{Network Utility Maximization Problems}
Consider the simple NUM problem described in Figure \ref{fig:network-flow}. Let $x_{1}, x_{2}$ and $x_{3}$ be the data rates of stream $1, 2$ and $3$ and let the network utility be minimizing $- \log (x_{1}) - 2\log(x_{2}) - 3\log(x_{3})$. It can be checked that capacity constraints other than $x_{1} + x_{2} + x_{3} \leq 10, x_{1}+x_{2} \leq 8, $ and $x_{2} + x_{3}\leq 8$ are redundant. By Theorem \ref{thm:NUM-locally-quadratic}, the NUM problem can be formulated as follows:
\begin{align*}
\min \quad & -\log(x_1)- 2\log(x_2) - 3\log(x_3) \\
\text{s.t.} \quad  &  \mathbf{A}\mathbf{x} \leq \mathbf{b}\\
& \mathbf{0}\leq \mathbf{x} \leq \mathbf{x}^{\max}  
\end{align*} 
where $\mathbf{A} = \left[\begin{array}{ccc} 1 & 1 &1 \\ 1&1&0 \\ 0&1&1\end{array}\right]$, $\mathbf{b} = \left[\begin{array}{c}10\\8\\8\end{array}\right]$ and $\mathbf{x}^{\max} = \left[\begin{array}{c}11\\11\\11\end{array}\right]$. The optimal solution to this NUM problem is $x_{1}^{\ast} =2, x_{2}^{\ast} = 3.2, x_{3}^{\ast} = 4.8$ and the optimal value is $-7.7253$. Note that the second capacity constraint $x_{1}+x_{2} \leq 8$ is loose and the other two capacity constraints are tight.

Since the objective function is decomposable, the dual subgradient method  can yield a distributed solution. This is why the dual subgradient method is widely used to solve NUM problems \cite{Low99TON}.  It can be checked that the objective function is strongly convex with modulus $\alpha = \frac{2}{121}$ on $\mathcal{X} = \{\mathbf{0}\leq \mathbf{x} \leq \mathbf{x}^{\max}\}$ and  $\mathbf{g}$ is Lipschitz continuous with modulus $\beta \leq \sqrt{6}$ on $\mathcal{X}$. Figure \ref{fig:num-alg1-convergence} verifies the convergence of $\overline{\mathbf{x}}(t)$ with $c = \frac{\alpha}{\beta^2} = 1/363$ and $\lambda_1(0)  = \lambda_2(0) = \lambda_{3}(0) = 0$. Since $\lambda_1(0)= \lambda_2(0) =\lambda_{3}(0) =  0$, by Theorem  \ref{thm:utility-convergence-time}, we know $f(\overline{\mathbf{x}}(t)) \leq f(\mathbf{x}^\ast), \forall t>0$, which is also verified in Figure \ref{fig:num-alg1-convergence}. To verify the convergence time of constraint violations,  Figure \ref{fig:num-alg1-convergence-time} plots $g_1(\overline{\mathbf{x}}(t))$, $g_2(\overline{\mathbf{x}}(t))$, $g_{3}(\overline{\mathbf{x}}(t))$ and $1/t$ with both x-axis and y-axis in $\log_{10}$ scales. As observed in Figure \ref{fig:num-alg1-convergence-time}, the curves of $g_1(\overline{\mathbf{x}}(t))$ and $g_3(\overline{\mathbf{x}}(t))$ are parallel to the curve of $1/t$ for large $t$. Note that $g_2(\overline{\mathbf{x}}(t))\leq 0$ is satisfied early because this constraint is loose. Figure \ref{fig:num-alg1-convergence-time} verifies the convergence time of $\overline{\mathbf{x}}(t)$ in Theorem \ref{thm:overall-convergence-time} by showing that error decays like $O(\frac{1}{t})$ and suggests that the convergence time is actually $\Theta(\frac{1}{\epsilon})$ for this NUM problem.

\begin{figure}
\centering
   \includegraphics[width=0.6\textwidth,height=0.6\textheight,keepaspectratio=true]{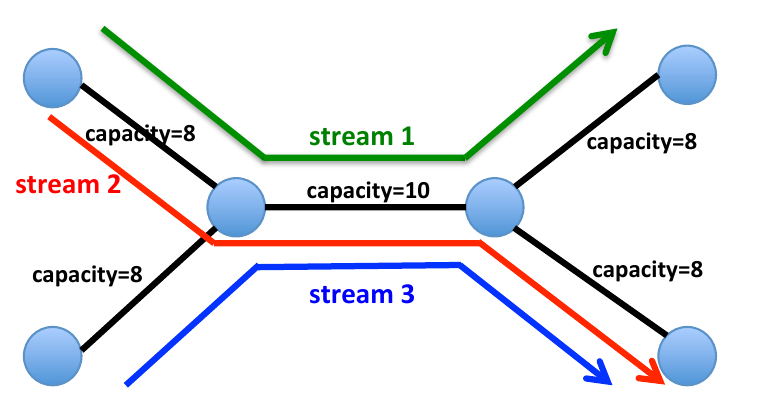} 
   \caption{A simple NUM problem with $3$ flow streams}
   \label{fig:network-flow}
\end{figure}

\begin{figure}
\centering
   \includegraphics[width=0.6\textwidth,height=0.6\textheight,keepaspectratio=true]{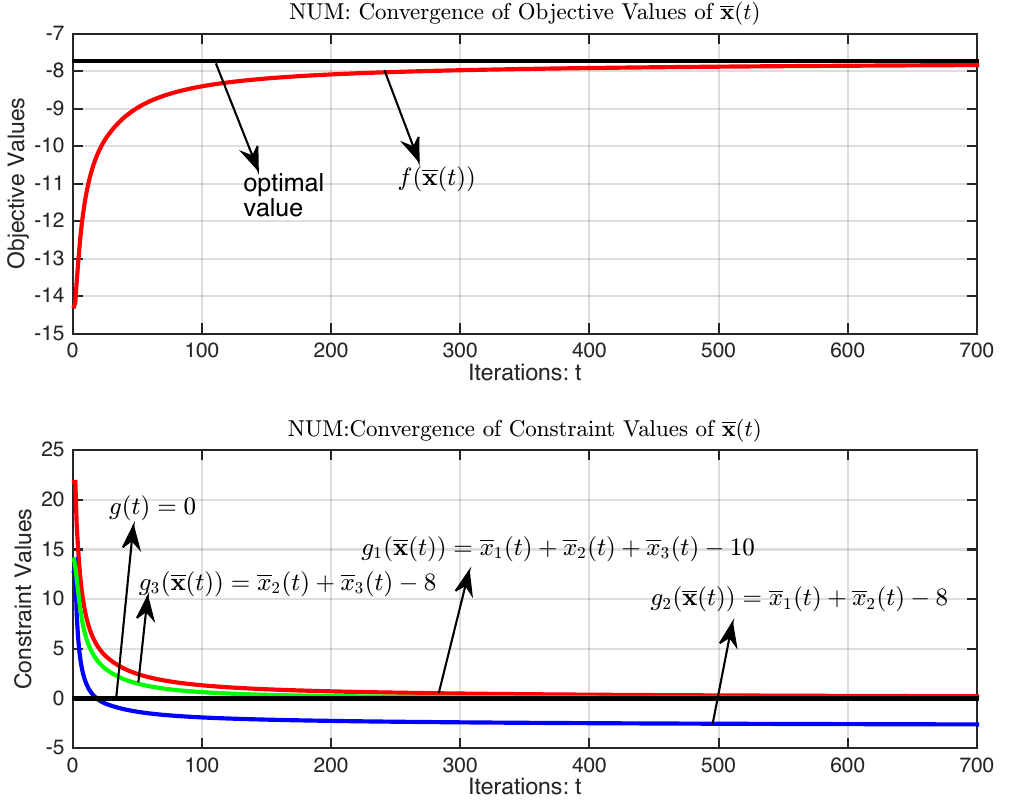} 
   \caption{The convergence of $\overline{\mathbf{x}}(t)$ for a NUM problem.}
   \label{fig:num-alg1-convergence}
\end{figure}
\begin{figure}
\centering
   \includegraphics[width=0.6\textwidth,height=0.6\textheight,keepaspectratio=true]{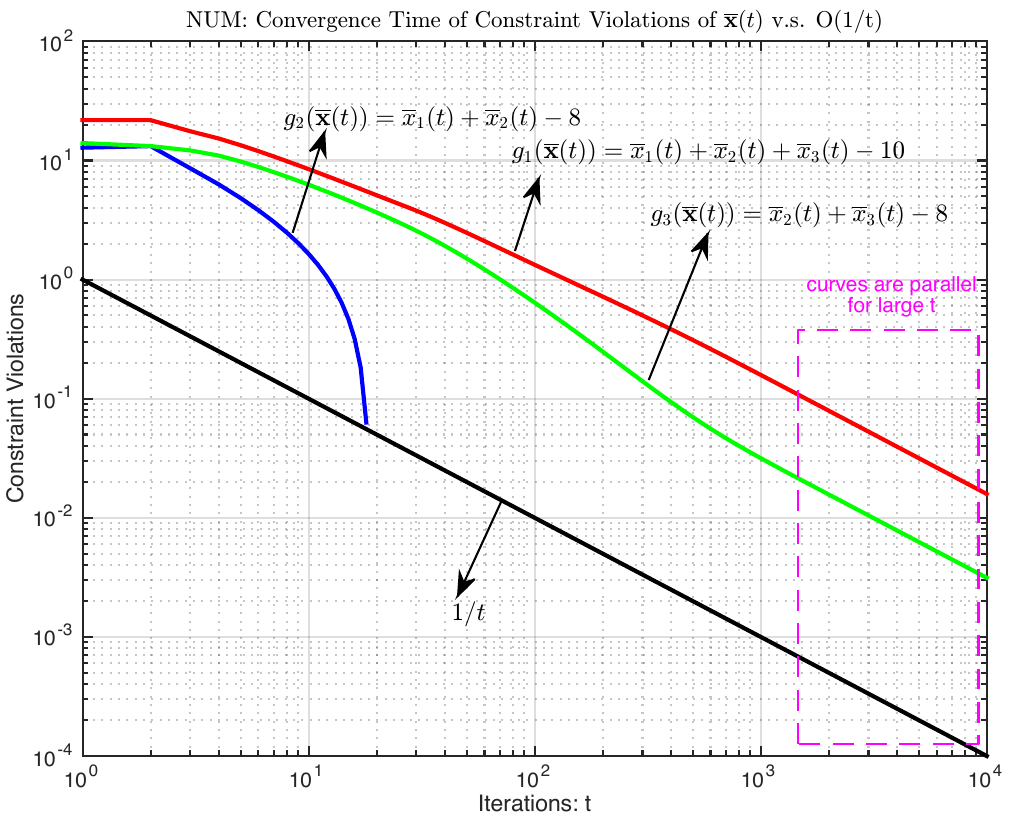} 
   \caption{The convergence time of $\overline{\mathbf{x}}(t)$ for a NUM problem.}
   \label{fig:num-alg1-convergence-time}
\end{figure}

Note that $\text{rank}(\mathbf{A}) = 3$. By Theorem \ref{thm:NUM-locally-quadratic}, this NUM problem satisfies Assumptions \ref{as:strongly-convex-problem}-\ref{as:dual-locally-stronlgy-concave}.  Apply Algorithm \ref{alg:dual-subgradient} with $c = \frac{\alpha}{\beta^2} = 1/363$ and $\lambda_1(0)  = \lambda_2(0) = \lambda_{3}(0) = 0$ to this NUM problem. Figure \ref{fig:num-alg3-convergence} verifies the convergence of the objective and constraint functions for $\widetilde{\mathbf{x}}(t)$. Figure \ref{fig:num-alg3-convergence-time} verifies the results in Theorem \ref{thm:NUM-locally-quadratic} that the convergence time of $\widetilde{\mathbf{x}}(t)$ is $O(\log(\frac{1}{\epsilon}))$ by showing that error decays like $O(\frac{1}{t} 0.998^t)$. If we compare Figure \ref{fig:num-alg3-convergence-time} and Figure \ref{fig:num-alg1-convergence-time}, we can observe that $\widetilde{\mathbf{x}}(t)$ converges much faster than $\overline{\mathbf{x}}(t)$.

\begin{figure}
\centering
   \includegraphics[width=0.6\textwidth,height=0.6\textheight,keepaspectratio=true]{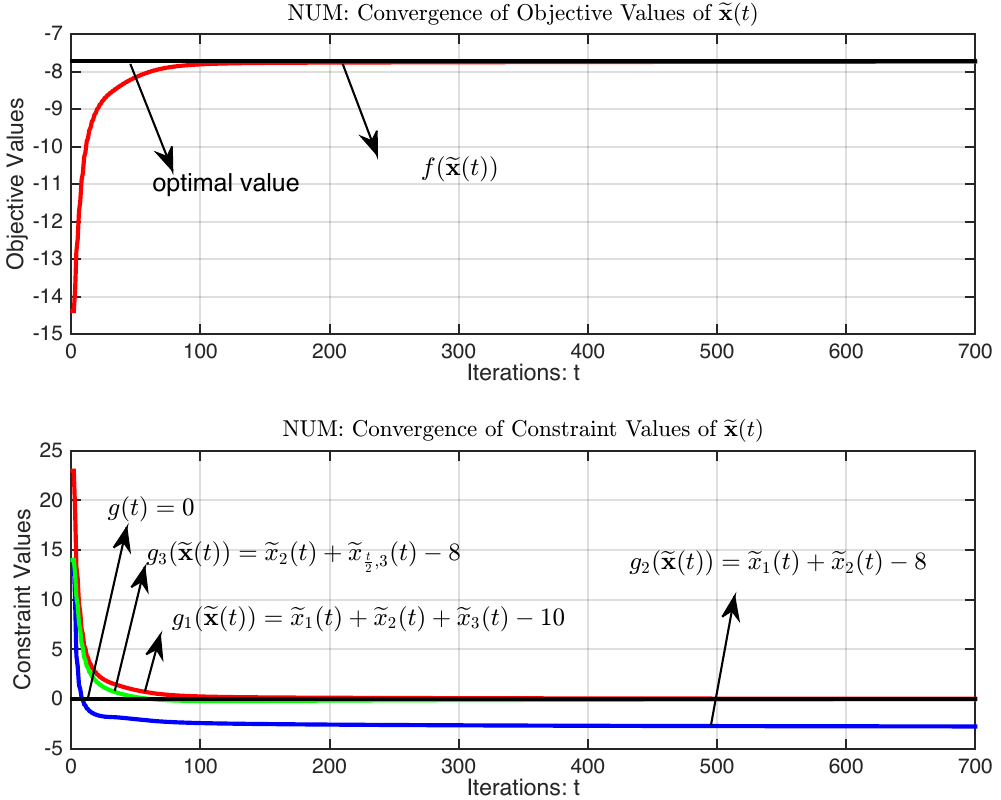} 
   \caption{The convergence of $\widetilde{\mathbf{x}}(t)$ for a NUM problem.}
   \label{fig:num-alg3-convergence}
\end{figure}
\begin{figure}
\centering
   \includegraphics[width=0.6\textwidth,height=0.6\textheight,keepaspectratio=true]{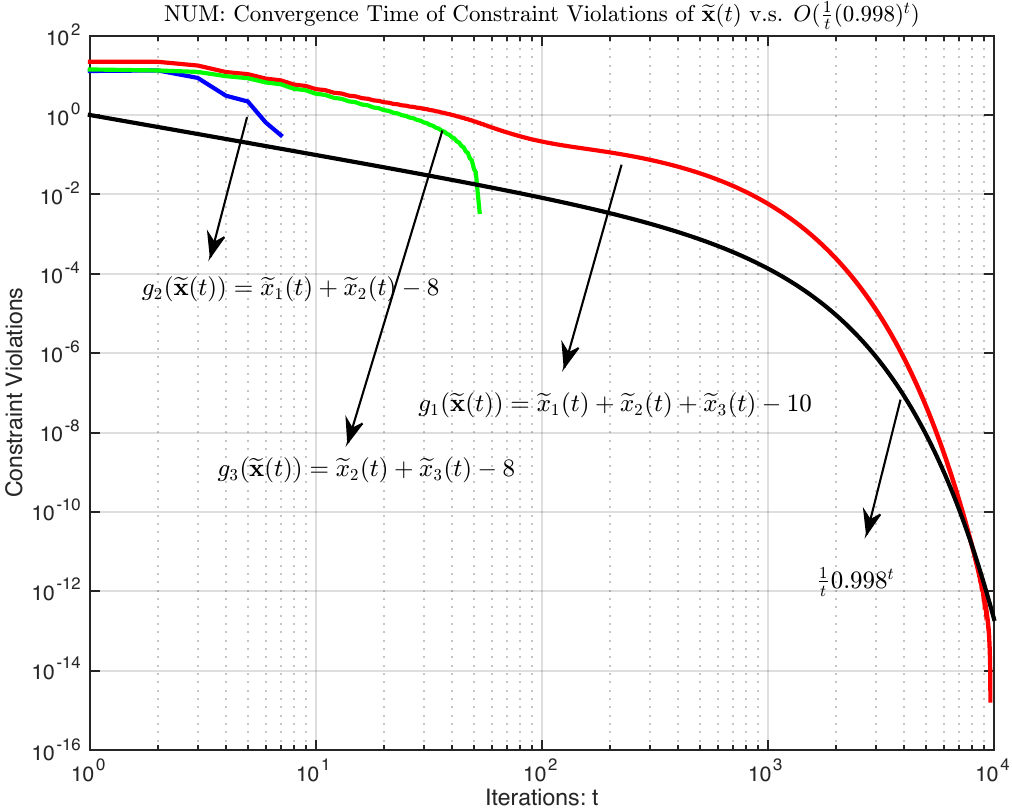} 
   \caption{The convergence time of $\widetilde{\mathbf{x}}(t)$ for a NUM problem.}
   \label{fig:num-alg3-convergence-time}
\end{figure}

\subsection{Quadratic Programs (QPs) with Linear Constraints}
Consider the following quadratic program (QP)
\begin{align*}
\min \quad & \mathbf{x}^T \mathbf{P} \mathbf{x} + \mathbf{c}^T\mathbf{x} \\
\text{s.t.} \quad  & \mathbf{A}\mathbf{x} \leq \mathbf{b}\\
\end{align*}
where $\mathbf{P} = \left[\begin{array}{cc} 1 &2 \\ 2 &5\end{array}\right]$, $\mathbf{c} = [1,1]^T$, $\mathbf{A} = \left[\begin{array}{cc} 1 &1 \\ 0 &1\end{array}\right]$ and $\mathbf{b} = [-2,-1]^T$. The optimal solution to this quadratic program is $\mathbf{x}^{\ast} = \left[\begin{array}{c}-1\\ -1\end{array}\right]$ and the optimal value is $8$.

If $P$ is a diagonal matrix, the dual subgradient method can yield a distributed solution.  It can be checked that the objective function is strongly convex with modulus $\alpha = 0.34$ and each row of the linear inequality constraint is Lipschitz continuous with modulus $\zeta = \sqrt{2}$.  Figure \ref{fig:qp-alg1-convergence} verifies the convergence of  $\overline{\mathbf{x}}(t)$ for the objective and constraint functions yielded by Algorithm \ref{alg:dual-subgradient} with $c = \frac{\alpha}{2\zeta^2} = 0.34/4$, $\lambda_1(0) = 0$ and $\lambda_2(0) = 0$.   Figure \ref{fig:qp-alg1-convergence-time} verifies the convergence time of $\overline{\mathbf{x}}(t)$ proven in Theorem \ref{thm:overall-convergence-time} by showing that error decays like $O(\frac{1}{t})$ and suggests that the convergence time is actually $\Theta(\frac{1}{\epsilon})$ for this quadratic program.
\begin{figure}
\centering
   \includegraphics[width=0.6\textwidth,height=0.6\textheight,keepaspectratio=true]{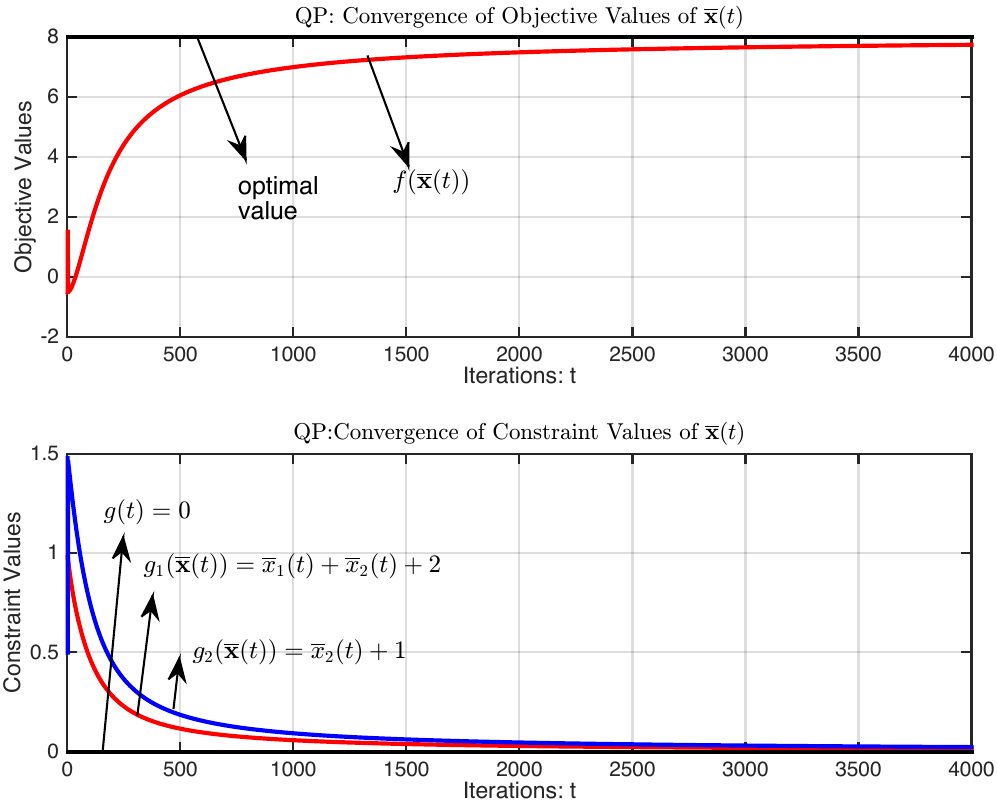} 
   \caption{The convergence of $\overline{\mathbf{x}}(t)$ for a quadratic program.}
   \label{fig:qp-alg1-convergence}
\end{figure}

\begin{figure}
\centering
   \includegraphics[width=0.6\textwidth,height=0.6\textheight,keepaspectratio=true]{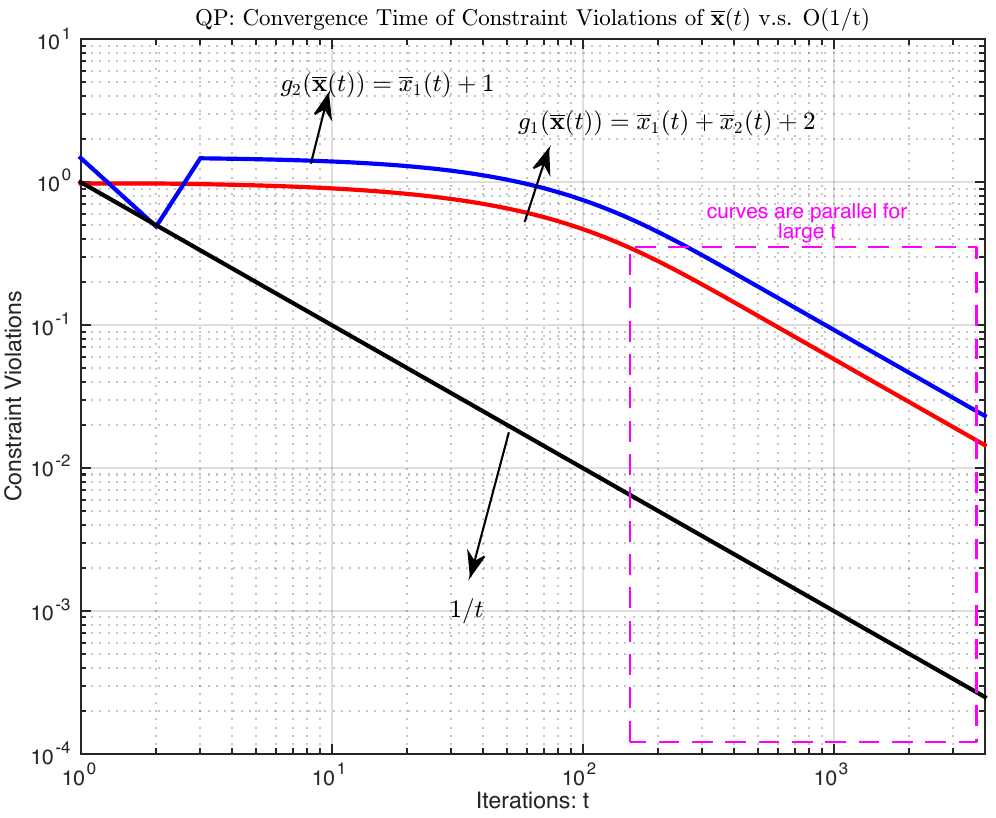} 
   \caption{The convergence time of $\overline{\mathbf{x}}(t)$ for a quadratic program.}
   \label{fig:qp-alg1-convergence-time}
\end{figure}

\begin{figure}
\centering
   \includegraphics[width=0.6\textwidth,height=0.6\textheight,keepaspectratio=true]{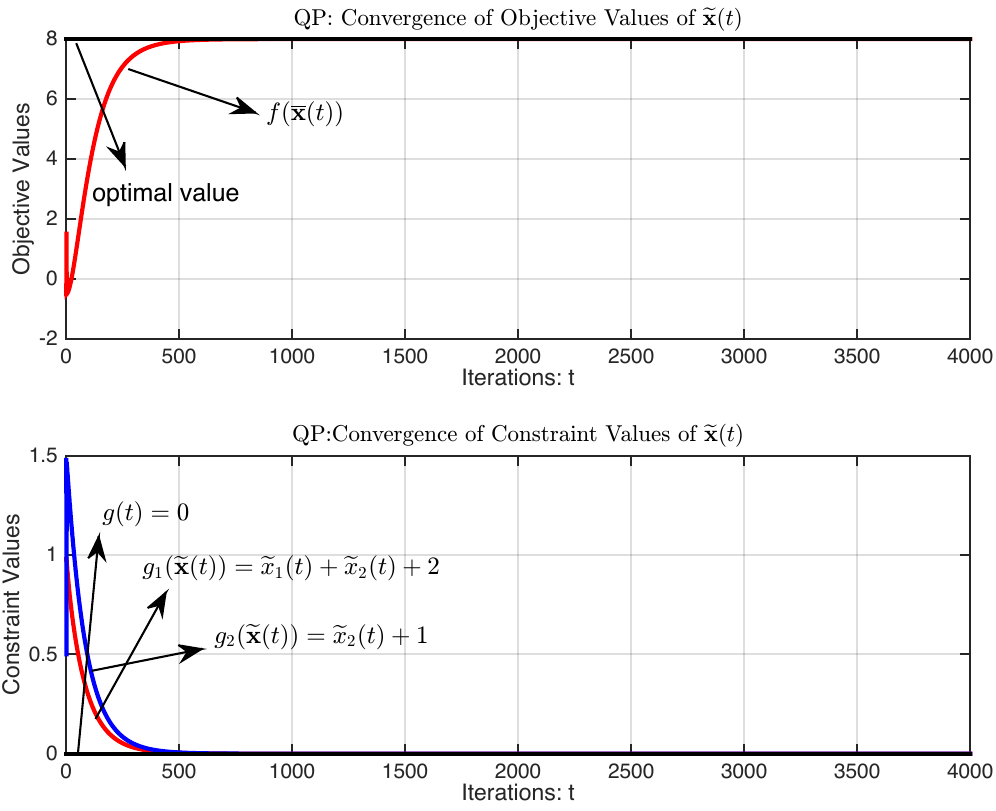} 
   \caption{The convergence of $\widetilde{\mathbf{x}}(t)$ for to a quadratic program.}
   \label{fig:qp-alg3-convergence}
\end{figure}
\begin{figure}
\centering
   \includegraphics[width=0.6\textwidth,height=0.6\textheight,keepaspectratio=true]{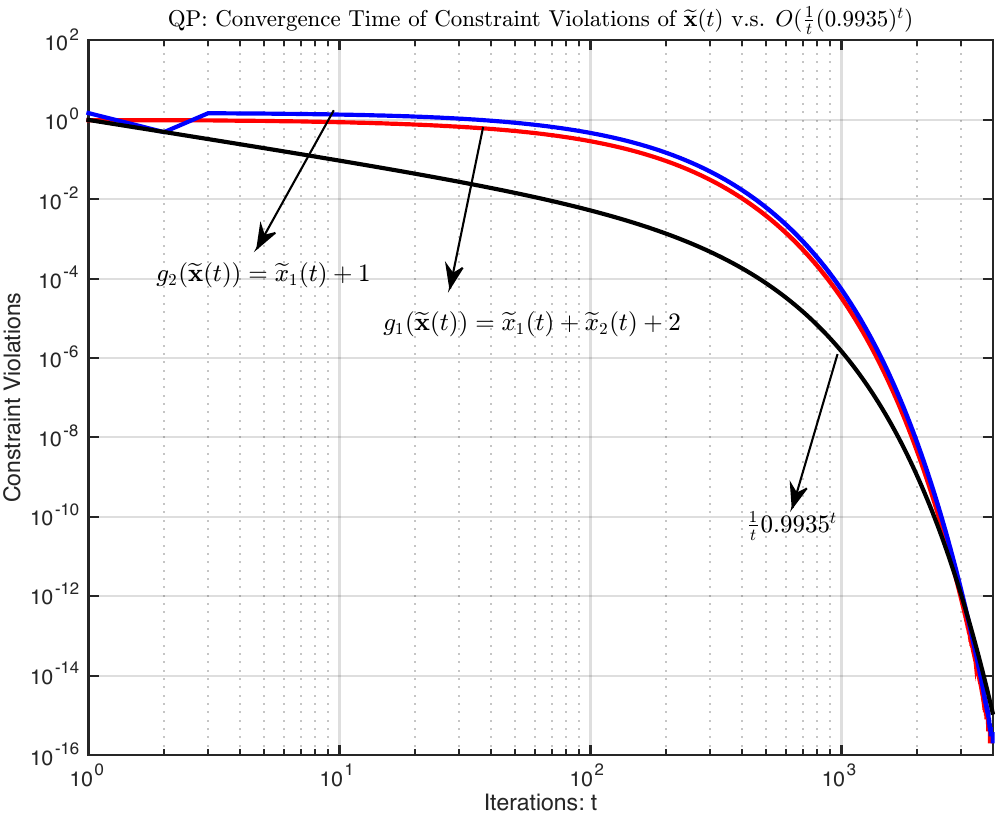} 
   \caption{The convergence time of $\widetilde{\mathbf{x}}(t)$ for a quadratic program.}
   \label{fig:qp-alg3-convergence-time}
\end{figure}

Note that $\text{rank}(A) = 2$. By Corollary \ref{cor:locally-quadratic-independent-linear-inequality} this quadratic program satisfies Assumptions \ref{as:strongly-convex-problem}-\ref{as:dual-locally-stronlgy-concave}.  

Apply Algorithm \ref{alg:dual-subgradient} with $c = \frac{\alpha}{2\zeta^2} = 0.34/4$ and $\lambda_1(0)  = \lambda_2(0) = \lambda_{3}(0) = 0$ to this quadratic program. Figure \ref{fig:qp-alg3-convergence} verifies the convergence of the objective and constraint functions. Figure \ref{fig:qp-alg3-convergence-time} verifies the results in Corollary \ref{cor:locally-quadratic-independent-linear-inequality} that the convergence time of $\widetilde{\mathbf{x}}(t)$  is $O(\log(\frac{1}{\epsilon}))$ by showing that error decays like $O(\frac{1}{t}0.9935^t)$. If we compare Figure \ref{fig:qp-alg3-convergence-time} and Figure \ref{fig:qp-alg1-convergence-time}, we can observe that Algorithm $\widetilde{\mathbf{x}}(t)$  converges much faster than $\overline{\mathbf{x}}(t)$. 

\subsection{Large Scale Quadratic Programs }
Consider quadratic program $\min_{\mathbf{x}\in \mathbb{R}^N} \{ \mathbf{x}^T \mathbf{Q} \mathbf{x} + \mathbf{d}^T\mathbf{x}: \mathbf{A}\mathbf{x} \leq \mathbf{b}\}$ where $\mathbf{Q}, \mathbf{A}\in \mathbb{R}^{N\times N}$ and $\mathbf{d}, \mathbf{b}\in \mathbb{R}^N$. $\mathbf{Q} = \mathbf{U}\Sigma \mathbf{U}^H \in \mathbb{R}^{N\times N}$ where $\mathbf{U}$ is the orthonormal basis for a random $N\times N$ zero mean and unit variance normal matrix and $\Sigma$ is the diagonal matrix with entries from uniform  $[1,3]$. $\mathbf{A}$ is a random $N\times N$ zero mean and unit variance normal matrix. $\mathbf{d}$ and $\mathbf{b}$ are random vectors with entries from uniform  $[0,1]$. In a PC with a 4 core 2.7GHz Intel i7 Cpu and 16GB Memory, we run both Algorithm \ref{alg:dual-subgradient} and quadprog from Matlab, which by default is using the interior point method, over randomly generated large scale quadratic programs with $N=400, 600, 800,1000$ and $1200$.  For different problem size $N$,  the running time is the average over $100$ random quadratic programs and is plotted in Figure \ref{fig:large-scale}.  To solve this large scale quadratic programs, the dual subgradient method has updates $\mathbf{x}(t) = -\frac{1}{2} \mathbf{Q}^{-1} [ \boldsymbol{\lambda}^T(t) \mathbf{A} + \mathbf{d}]$ and $\boldsymbol{\lambda}(t+1) = \max\{\boldsymbol{\lambda}(t) + c [\mathbf{A}\mathbf{x}(t) - b], \mathbf{0}\}$ at each iteration $t$. Note that we only need to compute the inverse of large matrix $\mathbf{\mathbf{Q}}$ once and then use it during all iterations. In our numerical simulations, Algorithm \ref{alg:dual-subgradient} is terminated when the error (both objective violations and constraint violations) of $\widetilde{\mathbf{x}}(t)$ is less than 1e-5.

\begin{figure}
\centering
   \includegraphics[width=0.6\textwidth,height=0.6\textheight,keepaspectratio=true]{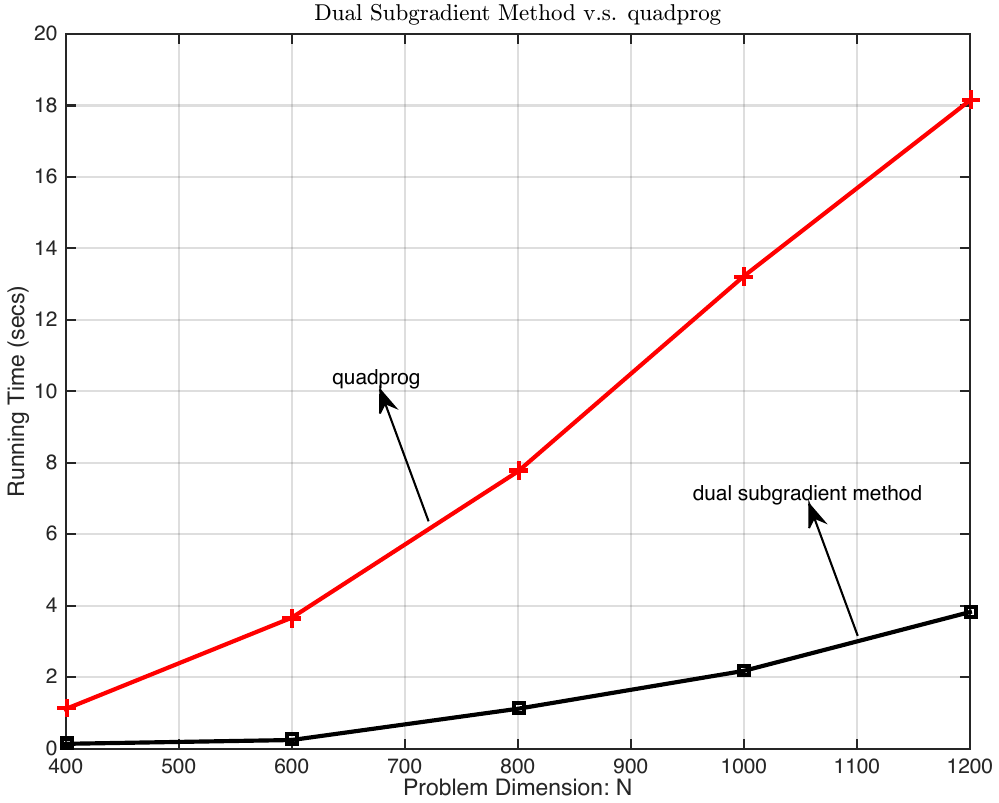} 
   \caption{The average running time for large scale quadratic programs.}
   \label{fig:large-scale}
\end{figure}

\section{Conclusions}
This paper studies the convergence time of the dual subgradient method strongly convex programs. This paper shows that the convergence time of the dual subgradient method with simple running averages for general strongly convex programs is $O(\frac{1}{\epsilon})$. This paper also considers a variation of the primal averages, called the sliding running averages, and shows that if the dual function is  locally quadratic then the convergence time is $O(\log(\frac{1}{\epsilon}))$.

\newpage
\appendices
\section{Proof of Lemma \ref{lm:dual_problem_convergence}} \label{app:dual_problem_convergence}

Note that $\lambda_k(t+1) = \max\{\lambda_k(t)+ c g_k(\mathbf{x}(t)),0\}, \forall k\in\{1,2,\ldots,m\}$ can be interpreted as the $\boldsymbol{\lambda}(t+1) = \mathcal{P}_{\mathbb{R}^m_+}\big[\boldsymbol{\lambda}(t)+c \mathbf{g}(\mathbf{x}(t))\big]$ where $\mathcal{P}_{\mathbb{R}^m_+}$ is the projection onto $\mathbb{R}^m_+$.  As observed before, the dynamic of $\boldsymbol{\lambda}(t)$ can be interpreted as the projected gradient method with  step size $c$ to solve $\max\limits_{\boldsymbol{\lambda}\in \mathbb{R}^m_+} \{q(\boldsymbol{\lambda})\}$.  Thus, the proof given below is essentially the same as the convergence time proof of the projected gradient method for set constrained smooth optimization in \cite{book_ConvexOpt_Nesterov}.

\begin{Lem}[Descent Lemma, Proposition A.24 in \cite{book_NonlinearProgramming_Bertsekas}] \label{lm:descent-lemma}
Let $h$ be a continuously differentiable function. If $h$ is smooth on $\mathcal{X}$ with modulus $L$, then for any $\mathbf{x}, \mathbf{y} \in \mathcal{X}$ we have
\begin{align*}
h(\mathbf{y})  &\geq  h(\mathbf{x}) + \nabla h(\mathbf{x})^T (\mathbf{y} - \mathbf{x}) - \frac{L}{2} || \mathbf{y} - \mathbf{x}||^2, \\
h(\mathbf{y})  &\leq  h(\mathbf{x}) + \nabla h(\mathbf{x})^T (\mathbf{y} - \mathbf{x}) + \frac{L}{2} || \mathbf{y} - \mathbf{x}||^2.
\end{align*} 
\end{Lem}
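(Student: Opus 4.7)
The plan is to prove the Descent Lemma by a one-dimensional reduction: parametrize the line segment from $\mathbf{x}$ to $\mathbf{y}$ by $\phi(t) = f(\mathbf{x} + t(\mathbf{y}-\mathbf{x}))$ for $t \in [0,1]$, and apply the fundamental theorem of calculus to $\phi$. This is standard and the key leverage of the Lipschitz-gradient hypothesis shows up in one place: bounding the ``error'' between $\nabla f$ at an interior point of the segment and $\nabla f(\mathbf{x})$.

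First, I would note that continuous differentiability of $f$ on (a convex set containing) $\mathcal{X}$ makes $\phi$ differentiable on $[0,1]$ with $\phi'(t) = \nabla f(\mathbf{x}+t(\mathbf{y}-\mathbf{x}))^{T}(\mathbf{y}-\mathbf{x})$. Hence
\begin{align*}
f(\mathbf{y}) - f(\mathbf{x}) \;=\; \phi(1) - \phi(0) \;=\; \int_{0}^{1} \nabla f(\mathbf{x}+t(\mathbf{y}-\mathbf{x}))^{T}(\mathbf{y}-\mathbf{x})\,dt.
\end{align*}
Subtracting the first-order term $\nabla f(\mathbf{x})^{T}(\mathbf{y}-\mathbf{x}) = \int_{0}^{1}\nabla f(\mathbf{x})^{T}(\mathbf{y}-\mathbf{x})\,dt$ from both sides gives
\begin{align*}
f(\mathbf{y}) - f(\mathbf{x}) - \nabla f(\mathbf{x})^{T}(\mathbf{y}-\mathbf{x}) \;=\; \int_{0}^{1}\bigl[\nabla f(\mathbf{x}+t(\mathbf{y}-\mathbf{x})) - \nabla f(\mathbf{x})\bigr]^{T}(\mathbf{y}-\mathbf{x})\,dt.
\end{align*}

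Next I would bound the absolute value of the right-hand side. By the Cauchy-Schwarz inequality the integrand is at most $\Vert \nabla f(\mathbf{x}+t(\mathbf{y}-\mathbf{x})) - \nabla f(\mathbf{x}) \Vert \cdot \Vert \mathbf{y}-\mathbf{x}\Vert$, and by the assumed Lipschitz continuity of $\nabla f$ with modulus $L$ this is at most $L t \Vert \mathbf{y}-\mathbf{x}\Vert^{2}$. Integrating $L t$ over $t \in [0,1]$ yields $\frac{L}{2}$, so
\begin{align*}
\left| f(\mathbf{y}) - f(\mathbf{x}) - \nabla f(\mathbf{x})^{T}(\mathbf{y}-\mathbf{x}) \right| \;\leq\; \frac{L}{2}\Vert \mathbf{y}-\mathbf{x}\Vert^{2},
\end{align*}
which is exactly the two-sided bound claimed in the lemma.

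The only subtlety (and it is mild, not really an obstacle) is ensuring the segment $\{\mathbf{x}+t(\mathbf{y}-\mathbf{x}): t\in[0,1]\}$ lies in a region where $\nabla f$ is defined and $L$-Lipschitz; this is guaranteed because the hypothesis is stated on a convex set $\mathcal{X}$ containing both $\mathbf{x}$ and $\mathbf{y}$. Everything else is a routine calculus-plus-Cauchy-Schwarz argument, so I do not expect any genuine difficulty.
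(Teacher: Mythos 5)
Your proof is correct and is exactly the standard integral-form argument; the paper does not prove this lemma itself but simply cites it as Proposition A.24 of Bertsekas's \emph{Nonlinear Programming}, whose proof is the same fundamental-theorem-of-calculus plus Cauchy--Schwarz plus Lipschitz-gradient computation you give. No gaps.
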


\begin{Fact}  \label{fact:app_f1} 
$\boldsymbol{\lambda}(t+1) = \argmax_{\boldsymbol{\lambda}\in \mathbb{R}^m_+} \big[ q(\boldsymbol{\lambda}(t)) + [\mathbf{g}(\mathbf{x}(t))]^{T} [\boldsymbol{\lambda} - \boldsymbol{\lambda}(t)] -\frac{1}{2c} \Vert \boldsymbol{\lambda} - \boldsymbol{\lambda}(t)\Vert^{2}\big], \forall t\geq 0$.
\end{Fact}
\begin{proof} Fix $t\geq 0$,
\begin{align*}
\boldsymbol{\lambda}(t+1) =& \mathcal{P}_{\mathbb{R}^m_+}\left[\boldsymbol{\lambda}(t)+c \mathbf{g}(\mathbf{x}(t))\right]\\
=& \argmin_{\boldsymbol{\lambda}\in \mathbb{R}^m_+}  \Vert \boldsymbol{\lambda} - [\boldsymbol{\lambda}(t)+c \mathbf{g}(\mathbf{x}(t))]\Vert^{2}\\
=& \argmin_{\boldsymbol{\lambda}\in \mathbb{R}^m_+} \left[ c^{2} \Vert \mathbf{g}(\mathbf{x}(t))\Vert^{2} - 2c [\mathbf{g}(\mathbf{x}(t))]^{T} [\boldsymbol{\lambda} - \boldsymbol{\lambda}(t)] + \Vert \boldsymbol{\lambda} - \boldsymbol{\lambda}(t)\Vert^{2}\right]\\
\overset{(a)}{=}& \argmin_{\boldsymbol{\lambda}\in \mathbb{R}^m_+} \left[ -q(\boldsymbol{\lambda}(t)) - [\mathbf{g}(\mathbf{x}(t))]^{T} [\boldsymbol{\lambda} - \boldsymbol{\lambda}(t)] + \frac{1}{2c} \Vert \boldsymbol{\lambda} - \boldsymbol{\lambda}(t)\Vert^{2}\right]\\
=&\argmax_{\boldsymbol{\lambda}\in \mathbb{R}^m_+} \left[ q(\boldsymbol{\lambda}(t)) + [\mathbf{g}(\mathbf{x}(t))]^{T} [\boldsymbol{\lambda} - \boldsymbol{\lambda}(t)] -\frac{1}{2c} \Vert \boldsymbol{\lambda} - \boldsymbol{\lambda}(t)\Vert^{2}\right]
\end{align*}
where (a) follows because the minimizer is unchanged when we remove constant term $c^{2} \Vert \mathbf{g}(\mathbf{x}(t))\Vert^{2}$,  divide by factor $2c$, and add constant term $-q(\boldsymbol{\lambda}(t))$ in the objective function. 
\end{proof}

Recall that $q(\boldsymbol{\lambda})$ is smooth with modulus $\gamma = \frac{\beta^2}{\alpha}$ by Lemma \ref{lm:dual-smooth}.

\begin{Fact}\label{fact:app_f2} 
If $c\leq\frac{1}{\gamma}= \frac{\alpha}{\beta^2}$, then $q(\boldsymbol{\lambda}(t+1)) \geq q(\boldsymbol{\lambda}(t)), \forall t \geq 0$.
\end{Fact}
\begin{proof} Fix $t\geq 0$,
\begin{align*}
q(\boldsymbol{\lambda}(t+1)) \overset{(a)}{\geq}& q(\boldsymbol{\lambda}(t)) + [\mathbf{g}(\mathbf{x}(t))]^{T}[\boldsymbol{\lambda}(t+1) - \boldsymbol{\lambda}(t)] - \frac{\gamma}{2} \Vert \boldsymbol{\lambda}(t+1) - \boldsymbol{\lambda}(t)\Vert^{2}\\
\overset{(b)}{\geq}& q(\boldsymbol{\lambda}(t)) + [\mathbf{g}(\mathbf{x}(t))]^{T}[\boldsymbol{\lambda}(t+1) - \boldsymbol{\lambda}(t)] - \frac{1}{2c} \Vert \boldsymbol{\lambda}(t+1) - \boldsymbol{\lambda}(t)\Vert^{2}\\
\overset{(c)}{\geq}& q(\boldsymbol{\lambda}(t)) + [\mathbf{g}(\mathbf{x}(t))]^{T}[\boldsymbol{\lambda}(t) - \boldsymbol{\lambda}(t)] - \frac{1}{2c} \Vert \boldsymbol{\lambda}(t) - \boldsymbol{\lambda}(t)\Vert^{2}\\
=&q(\boldsymbol{\lambda}(t))
\end{align*}
where (a) follows from Lemma \ref{lm:descent-lemma} and the fact that $\nabla_{\boldsymbol{\lambda}} q(\boldsymbol{\lambda}(t)) =  \mathbf{g}(\mathbf{x}(t))$; (b) follows from $c \leq \frac{1}{\gamma}$; and (c) follows form Fact \ref{fact:app_f1}.
\end{proof}

\begin{Fact} \label{fact:app_f3} $\mathbf{g}^T(\mathbf{x}(t))[\boldsymbol{\lambda}(t+1) -\boldsymbol{\lambda}^\ast ] \geq \frac{1}{c}[\boldsymbol{\lambda}(t+1) - \boldsymbol{\lambda}(t)]^T [\boldsymbol{\lambda}(t+1) - \boldsymbol{\lambda}^\ast], 
\forall t\geq 0$
\end{Fact}
\begin{IEEEproof}
Fix $t\geq0$. By the projection theorem (Proposition B.11(b) in \cite{book_NonlinearProgramming_Bertsekas}), we have $ \big[\boldsymbol{\lambda}(t+1) - \big(\boldsymbol{\lambda}(t) + c\mathbf{g}(\mathbf{x}(t))\big)\big]^T [\boldsymbol{\lambda}(t+1) - \boldsymbol{\lambda}^\ast ]\leq 0$. Thus, $\mathbf{g}^T(\mathbf{x}(t))[\boldsymbol{\lambda}(t+1) -\boldsymbol{\lambda}^\ast ] \geq \frac{1}{c}[\boldsymbol{\lambda}(t+1) - \boldsymbol{\lambda}(t)]^T [\boldsymbol{\lambda}(t+1) - \boldsymbol{\lambda}^\ast]$. \end{IEEEproof}

\begin{Fact}\label{fact:app_f4} If $c\leq \frac{1}{\gamma} = \frac{\alpha}{\beta^2}$, then $q(\boldsymbol{\lambda}^\ast) - q(\boldsymbol{\lambda}(t+1)) \leq \frac{1}{2c} \Vert \boldsymbol{\lambda}(t) - \boldsymbol{\lambda}^\ast\Vert^2 - \frac{1}{2c} \Vert \boldsymbol{\lambda}(t+1) - \boldsymbol{\lambda}^\ast\Vert^2, \forall t\geq 0$.
\end{Fact}
\begin{IEEEproof} Fix $t\geq 0$, 
\begin{align*}
&q(\boldsymbol{\lambda}(t+1)) \\
\overset{(a)}{\geq} &  q(\boldsymbol{\lambda}(t)) + [\mathbf{g}(\mathbf{x}(t))]^T [\boldsymbol{\lambda}(t+1) - \boldsymbol{\lambda}(t)] - \frac{\gamma}{2} \Vert \boldsymbol{\lambda}(t+1) - \boldsymbol{\lambda}(t)\Vert^2 \\
=& q(\boldsymbol{\lambda}(t)) + [\mathbf{g}(\mathbf{x}(t))]^T [\boldsymbol{\lambda}(t+1) - \boldsymbol{\lambda}^\ast + \boldsymbol{\lambda}^\ast - \boldsymbol{\lambda}(t)] - \frac{\gamma}{2} \Vert \boldsymbol{\lambda}(t+1) - \boldsymbol{\lambda}(t)\Vert^2 \\
\overset{(b)}{\geq} &q(\boldsymbol{\lambda}(t)) + [\mathbf{g}(\mathbf{x}(t))]^T [\boldsymbol{\lambda}^\ast - \boldsymbol{\lambda}(t)] - \frac{\gamma}{2} \Vert \boldsymbol{\lambda}(t+1) - \boldsymbol{\lambda}(t)\Vert^2 + \frac{1}{c}[\boldsymbol{\lambda}(t+1) - \boldsymbol{\lambda}(t)]^T [\boldsymbol{\lambda}(t+1) - \boldsymbol{\lambda}^\ast]\\
\overset{(c)}{\geq} &q(\boldsymbol{\lambda}(t)) + [\mathbf{g}(\mathbf{x}(t))]^T [\boldsymbol{\lambda}^\ast - \boldsymbol{\lambda}(t)] - \frac{\gamma}{2} \Vert \boldsymbol{\lambda}(t+1) - \boldsymbol{\lambda}(t)\Vert^2 + \frac{1}{2c} \Vert \boldsymbol{\lambda}(t+1) - \boldsymbol{\lambda}(t)\Vert^2 + \frac{1}{2c} \Vert \boldsymbol{\lambda}(t+1) - \boldsymbol{\lambda}^\ast\Vert^2 \\ &- \frac{1}{2c} \Vert \boldsymbol{\lambda}(t) - \boldsymbol{\lambda}^\ast\Vert^2\\
\overset{(d)}{\geq} & q(\boldsymbol{\lambda}(t)) + [\mathbf{g}(\mathbf{x}(t))]^T [\boldsymbol{\lambda}^\ast - \boldsymbol{\lambda}(t)] + \frac{1}{2c} \Vert \boldsymbol{\lambda}(t+1) - \boldsymbol{\lambda}^\ast\Vert^2 - \frac{1}{2c} \Vert \boldsymbol{\lambda}(t) - \boldsymbol{\lambda}^\ast\Vert^2\\
\overset{(e)}{\geq} & q(\boldsymbol{\lambda}^\ast)+ \frac{1}{2c} \Vert \boldsymbol{\lambda}(t+1) - \boldsymbol{\lambda}^\ast\Vert^2 - \frac{1}{2c} \Vert \boldsymbol{\lambda}(t) - \boldsymbol{\lambda}^\ast\Vert^2
\end{align*}
where (a) follows from Lemma \ref{lm:descent-lemma} and the fact that $\nabla_{\boldsymbol{\lambda}} q(\boldsymbol{\lambda}(t)) =  \mathbf{g}(\mathbf{x}(t))$; (b) follows from Fact \ref{fact:app_f3}; (c) follows from the identity $\mathbf{u}^T\mathbf{v} = \frac{1}{2} \Vert \mathbf{u}\Vert^2 + \frac{1}{2} \Vert \mathbf{v}\Vert^2 - \frac{1}{2} \Vert \mathbf{u} - \mathbf{v}\Vert^2, \forall \mathbf{u}, \mathbf{v}\in 
\mathbb{R}^m$; (d) follows from $c\leq\frac{1}{\gamma}$; and (e) follows from the concavity of $q(\cdot)$.

Rearranging terms yields the desired result.
\end{IEEEproof}

Fix $c\leq \frac{1}{\gamma}$ and $t>0$. By Fact \ref{fact:app_f4}, we have $q(\boldsymbol{\lambda}^\ast) - q(\boldsymbol{\lambda}(\tau+1)) \leq \frac{1}{2c} \Vert \boldsymbol{\lambda}(\tau) - \boldsymbol{\lambda}^\ast\Vert^2 - \frac{1}{2c} \Vert \boldsymbol{\lambda}(\tau+1) - \boldsymbol{\lambda}^\ast\Vert^2, \forall \tau\in\{0,1,\ldots, t-1\}$. Summing over $\tau$  and dividing by fact $t$ yields
\begin{align*}
\frac{1}{t}\sum_{\tau=0}^{t-1} \big[q(\boldsymbol{\lambda}^\ast) - q(\boldsymbol{\lambda}(\tau+1))\big] \leq & \frac{1}{2c t} [ \Vert \boldsymbol{\lambda}(0) - \boldsymbol{\lambda}^\ast\Vert^2 - \Vert \boldsymbol{\lambda}(t) - \boldsymbol{\lambda}^\ast\Vert^2] \\
\leq & \frac{1}{2c t} \Vert \boldsymbol{\lambda}(0) - \boldsymbol{\lambda}^\ast\Vert^2
\end{align*}

Note that $q(\boldsymbol{\lambda}^\ast) - q(\boldsymbol{\lambda}(\tau+1)), \forall \tau\in\{0,1,\ldots, t-1\}$ is a decreasing sequence by Fact \ref{fact:app_f2}. Thus, we have
\begin{align*}
q(\boldsymbol{\lambda}^\ast) - q(\boldsymbol{\lambda}(t) \leq \frac{1}{t}\sum_{\tau=0}^{t-1} \big[q(\boldsymbol{\lambda}^\ast) - q(\boldsymbol{\lambda}(\tau+1))\big] \leq  \frac{1}{2c t} \Vert \boldsymbol{\lambda}(0) - \boldsymbol{\lambda}^\ast\Vert^2.
\end{align*}

\section{Proof of Part (2) of Lemma \ref{lm:local-quadratic-dual-arrive-time}} \label{app:local-quadratic-dual-geometric-convergence}
This part is essentially a local version of Theorem 12 in \cite{Necoara15LinearConvergence}, which shows that the projected gradient method for set constrained smooth convex optimization converge geometrically if the objective function satisfies a quadratic growth condition.

In this appendix, we provide a simple proof that directly follows from Fact \ref{fact:app_f4} and Assumption \ref{as:dual-locally-quadratic}. By Fact \ref{fact:app_f4}, we have
\begin{align}
q(\boldsymbol{\lambda}^\ast) - q(\boldsymbol{\lambda}(t+1)) \leq \frac{1}{2c} \Vert \boldsymbol{\lambda}(t) - \boldsymbol{\lambda}^\ast\Vert^2 - \frac{1}{2c} \Vert \boldsymbol{\lambda}(t+1) - \boldsymbol{\lambda}^\ast\Vert^2, \forall t\geq 0. \label{eq:pf-local-quadratic-dual-arrive-time-eq1}
\end{align}
By part (1), we know $\Vert \boldsymbol{\lambda}(t) -\boldsymbol{\lambda}^\ast\Vert \leq D_q, \forall t\geq T_q$. By Assumption \ref{as:dual-locally-quadratic}, we have
\begin{align}
q(\boldsymbol{\lambda}^\ast) - q(\boldsymbol{\lambda}(t+1))  \geq L_q \Vert  \boldsymbol{\lambda}(t+1) - \boldsymbol{\lambda}^\ast\Vert^2, \forall t\geq T_q. \label{eq:pf-local-quadratic-dual-arrive-time-eq2}
\end{align}

Combining \eqref{eq:pf-local-quadratic-dual-arrive-time-eq1} and \eqref{eq:pf-local-quadratic-dual-arrive-time-eq2} yields
\begin{align*}
(L_q + \frac{1}{2c}) \Vert \boldsymbol{\lambda}(t+1) - \boldsymbol{\lambda}^\ast\Vert^2 \leq \frac{1}{2c} \Vert \boldsymbol{\lambda}(t) - \boldsymbol{\lambda}^\ast\Vert^2, \forall t\geq T_q.
\end{align*}
This can be written as 
\begin{align*}
 \Vert \boldsymbol{\lambda}(t+1) - \boldsymbol{\lambda}^\ast\Vert \leq \sqrt{\frac{1}{1+2cL_q}}  \Vert \boldsymbol{\lambda}(t) - \boldsymbol{\lambda}^\ast\Vert, \forall t\geq T_q.
\end{align*}
By induction, we have
\begin{align*}
\Vert \boldsymbol{\lambda}(t) - \boldsymbol{\lambda}^\ast\Vert \leq \big( \sqrt{\frac{1}{1+2cL_q }} \big)^{t-T_q}\Vert \boldsymbol{\lambda}(T_q) - \boldsymbol{\lambda}^\ast\Vert, \forall t\geq T_q.
\end{align*}

\section{Proof of Lemma \ref{lm:smooth-modulus-strong-concave-modulus}}\label{app:smooth-modulus-strong-concave-modulus}
Define $\tilde{h}(\mathbf{x}) = - h(\mathbf{x})$. Then $\tilde{h}$ is smooth with modulus $\gamma$ and strongly convex with modulus $L_c$ over the set $\mathcal{X}$. By definition of smooth functions,  $f$ must be differentiable over set $\mathcal{X}$.   By Lemma \ref{lm:strong-convex}, we have
\begin{align*}
\tilde{h} (\mathbf{y}) \geq \tilde{h} (\mathbf{x}) +  [\nabla \tilde{h}(\mathbf{x})]^T (\mathbf{y} - \mathbf{x}) + \frac{L_c}{2} \Vert \mathbf{y} - \mathbf{x}\Vert^2, \forall \mathbf{x}, \mathbf{y}\in \mathcal{X}
\end{align*}
By Lemma \ref{lm:descent-lemma}, 
\begin{align*}
\tilde{h} (\mathbf{y})  \leq  \tilde{h} (\mathbf{x}) + [\nabla \tilde{h}(\mathbf{x})]^T (\mathbf{y} - \mathbf{x}) + \frac{\gamma}{2} \Vert \mathbf{y} - \mathbf{x}\Vert^2, \forall \mathbf{x}, \mathbf{y}\in \mathcal{X}
\end{align*}
Since $\mathcal{X}$ is not a singleton, we can choose distinct $\mathbf{x}, \mathbf{y}\in \mathcal{X}$. Combining the above two inequalities yields $ L_c\leq \gamma$.

\section{Proof of Part (2) of Lemma \ref{lm:local-strongly-concave-arrive-time}} \label{app:local-strongly-concave-dual-geometric-convergence}
By the first part of this lemma, $\boldsymbol{\lambda}(t) \in \{ \boldsymbol{\lambda} \in\mathbb{R}^m_+:\Vert \boldsymbol{\lambda} - \boldsymbol{\lambda}^\ast \Vert \leq D_c\}, \forall t\geq T_{c}$. The remaining part of the proof is essentially a local version of the convergence time proof of the projected gradient method for set constrained smooth and strongly convex optimization \cite{book_ConvexOpt_Nesterov}.

Recall that $q(\boldsymbol{\lambda})$ is smooth with modulus $\gamma = \frac{\beta^2}{\alpha}$ by Lemma \ref{lm:dual-smooth}.
The next fact is an enhancement of Fact \ref{fact:app_f4} using the locally strong concavity of the dual function.

\begin{Fact}\label{fact:app_f5} If $c\leq \frac{1}{\gamma} = \frac{\alpha}{\beta^2}$, then $q(\boldsymbol{\lambda}^\ast) - q(\boldsymbol{\lambda}(t+1)) \leq (\frac{1}{2c} - \frac{L_c}{2}) \Vert \boldsymbol{\lambda}(t) - \boldsymbol{\lambda}^\ast\Vert^2 - \frac{1}{2c} \Vert \boldsymbol{\lambda}(t+1) - \boldsymbol{\lambda}^\ast\Vert^2, \forall t\geq T_c$.
\end{Fact}
\begin{IEEEproof} Fix $t\geq T_c$, 
\begin{align*}
&q(\boldsymbol{\lambda}(t+1)) \\
\overset{(a)}{\geq} &  q(\boldsymbol{\lambda}(t)) + [\mathbf{g}(\mathbf{x}(t))]^T [\boldsymbol{\lambda}(t+1) - \boldsymbol{\lambda}(t)] - \frac{\gamma}{2} \Vert \boldsymbol{\lambda}(t+1) - \boldsymbol{\lambda}(t)\Vert^2 \\
=& q(\boldsymbol{\lambda}(t)) + [\mathbf{g}(\mathbf{x}(t))]^T [\boldsymbol{\lambda}(t+1) - \boldsymbol{\lambda}^\ast + \boldsymbol{\lambda}^\ast - \boldsymbol{\lambda}(t)] - \frac{\gamma}{2} \Vert \boldsymbol{\lambda}(t+1) - \boldsymbol{\lambda}(t)\Vert^2 \\
\overset{(b)}{\geq} &q(\boldsymbol{\lambda}(t)) + [\mathbf{g}(\mathbf{x}(t))]^T [\boldsymbol{\lambda}^\ast - \boldsymbol{\lambda}(t)] - \frac{\gamma}{2} \Vert \boldsymbol{\lambda}(t+1) - \boldsymbol{\lambda}(t)\Vert^2 + \frac{1}{c}[\boldsymbol{\lambda}(t+1) - \boldsymbol{\lambda}(t)]^T [\boldsymbol{\lambda}(t+1) - \boldsymbol{\lambda}^\ast]\\
\overset{(c)}{\geq} &q(\boldsymbol{\lambda}(t)) + [\mathbf{g}(\mathbf{x}(t))]^T [\boldsymbol{\lambda}^\ast - \boldsymbol{\lambda}(t)] - \frac{\gamma}{2} \Vert \boldsymbol{\lambda}(t+1) - \boldsymbol{\lambda}(t)\Vert^2 + \frac{1}{2c} \Vert \boldsymbol{\lambda}(t+1) - \boldsymbol{\lambda}(t)\Vert^2 + \frac{1}{2c} \Vert \boldsymbol{\lambda}(t+1) - \boldsymbol{\lambda}^\ast\Vert^2 \\ &- \frac{1}{2c} \Vert \boldsymbol{\lambda}(t) - \boldsymbol{\lambda}^\ast\Vert^2\\
\overset{(d)}{\geq} & q(\boldsymbol{\lambda}(t)) + [\mathbf{g}(\mathbf{x}(t))]^T [\boldsymbol{\lambda}^\ast - \boldsymbol{\lambda}(t)] + \frac{1}{2c} \Vert \boldsymbol{\lambda}(t+1) - \boldsymbol{\lambda}^\ast\Vert^2 - \frac{1}{2c} \Vert \boldsymbol{\lambda}(t) - \boldsymbol{\lambda}^\ast\Vert^2\\
\overset{(e)}{\geq} & q(\boldsymbol{\lambda}^\ast)+ \frac{1}{2c} \Vert \boldsymbol{\lambda}(t+1) - \boldsymbol{\lambda}^\ast\Vert^2 +(\frac{L_c}{2}- \frac{1}{2c}) \Vert \boldsymbol{\lambda}(t) - \boldsymbol{\lambda}^\ast\Vert^2
\end{align*}
where (a) follows from Lemma \ref{lm:descent-lemma} and the fact that $\nabla_{\boldsymbol{\lambda}} q(\boldsymbol{\lambda}(t)) =  \mathbf{g}(\mathbf{x}(t))$; (b) follows from Fact \ref{fact:app_f3}; (c) follows from the identity $\mathbf{u}^T\mathbf{v} = \frac{1}{2} \Vert \mathbf{u}\Vert^2 + \frac{1}{2} \Vert \mathbf{v}\Vert^2 - \frac{1}{2} \Vert \mathbf{u} - \mathbf{v}\Vert^2, \forall \mathbf{u}, \mathbf{v}\in 
\mathbb{R}^m$; (d) follows from $c\leq\frac{1}{\gamma}$; and (e) follows from the fact that $q(\cdot)$ is strongly concave over the set $\{\boldsymbol{\lambda}\in \mathbb{R}^m_+ : \Vert \boldsymbol{\lambda} - \boldsymbol{\lambda}^\ast\Vert \leq D_c \}$ such that $q(\boldsymbol{\lambda}^\ast)  \leq  q(\boldsymbol{\lambda}(t)) + [\mathbf{g}(\mathbf{x}(t))]^T [\boldsymbol{\lambda}^\ast - \boldsymbol{\lambda}(t)] - \frac{L_c}{2} \Vert \boldsymbol{\lambda}^\ast - \boldsymbol{\lambda}(t)\Vert^2$ by Lemma \ref{lm:strong-convex}\footnote{Note that the dual function $q(\boldsymbol{\lambda})$ is differentiable and has gradient $\nabla_{\boldsymbol{\lambda}} q(\boldsymbol{\lambda}(t)) = \mathbf{g}(\mathbf{x}(t))$ by the strong convexity of $f(\mathbf{x})$ and Proposition B.25 in \cite{book_NonlinearProgramming_Bertsekas}. Applying Lemma \ref{lm:strong-convex} to $\tilde{q}(\boldsymbol{\lambda}) = -q(\boldsymbol{\lambda})$, which has gradient $\nabla_{\boldsymbol{\lambda}} \tilde{q}(\boldsymbol{\lambda}(t)) = -\mathbf{g}(\mathbf{x}(t))$ and is strongly convex over the set $\{\boldsymbol{\lambda}\in \mathbb{R}^m_+ :\Vert \boldsymbol{\lambda} - \boldsymbol{\lambda}^\ast\Vert \leq D_c \}$, yields $q(\boldsymbol{\lambda}^\ast) \leq  q(\boldsymbol{\lambda}(t)) +  [\mathbf{g}(\mathbf{x}(t))]^T [\boldsymbol{\lambda}^\ast - \boldsymbol{\lambda}(t)] - \frac{L_c}{2} \Vert \boldsymbol{\lambda}^\ast - \boldsymbol{\lambda}(t)\Vert^2$.}.

Rearranging terms yields the desired inequality.
\end{IEEEproof}

Note that $q(\boldsymbol{\lambda}^\ast) - q(\boldsymbol{\lambda}(t+1)) \geq 0, \forall t>0$.  Combining with Fact \ref{fact:app_f5} yields $(\frac{1}{2c} - \frac{L_c}{2}) \Vert \boldsymbol{\lambda}(t) - \boldsymbol{\lambda}^\ast\Vert^2 - \frac{1}{2c} \Vert \boldsymbol{\lambda}(t+1) - \boldsymbol{\lambda}^\ast\Vert^2 \geq 0, \forall t\geq T_c$. Recall that $c \leq \frac{1}{\gamma}$ implies that $c\leq \frac{1}{L_c}$ by Lemma \ref{lm:smooth-modulus-strong-concave-modulus}. Thus, we have
\begin{align*}
\Vert \boldsymbol{\lambda}(t+1) - \boldsymbol{\lambda}^\ast\Vert \leq \sqrt{1 - cL_c} \Vert \boldsymbol{\lambda}(t) - \boldsymbol{\lambda}^\ast\Vert, \forall t\geq T_c.   
\end{align*}
By induction, we have
\begin{align*}
\Vert \boldsymbol{\lambda}(t) - \boldsymbol{\lambda}^\ast\Vert \leq \big( \sqrt{1- cL_c} \big)^{t-T_c}\Vert \boldsymbol{\lambda}(T_c) - \boldsymbol{\lambda}^\ast\Vert, \forall t\geq T_c.
\end{align*}

\section{Proof of  Theorem \ref{thm:strongly-concave-dual-fucntion}} \label{app:strongly-concave-dual-fucntion}

\begin{Lem}\label{lm:sufficient-condition-locally-quadratic}
Let $q(\boldsymbol{\lambda}): \mathbb{R}^{m}_{+}\rightarrow \mathbb{R}$ be a concave function and $q(\boldsymbol{\lambda})$ be maximized at $\boldsymbol{\lambda} = \boldsymbol{\lambda}^{\ast} \geq \mathbf{0}$. Denote $\mathbf{d} = \nabla_{\boldsymbol{\lambda}} q(\boldsymbol{\lambda}^{\ast})$ and $\mathbf{U}\boldsymbol{\Sigma}\mathbf{U}^{T} = \nabla_{\boldsymbol{\lambda}}^{2} q(\boldsymbol{\lambda}^{\ast})$. Suppose that the following conditions are satisfied:
\begin{enumerate}
\item Suppose $\mathbf{d}\leq \mathbf{0}$ and $\lambda_{k}^{\ast} d_{k} = 0, \forall k\in \{1,\ldots,m\}$. Denote $\mathcal{K} = \{k\in\{1,\ldots,m\}: d_{k} = 0\}$ and $l = |\mathcal{K}|$.
\item Suppose $\boldsymbol{\Sigma}\prec 0$ and  $\text{rank}(\mathbf{U}^{\prime}) = l$ where $\mathbf{U}^{\prime}$ is an $l\times n$ submatrix of  $\mathbf{U}$ composed by rows with indices in $\mathcal{K}$.
\end{enumerate}
Then, there exists $D_q>0$ and $L_{q}>0$ such that $q(\boldsymbol{\lambda}) \leq q(\boldsymbol{\lambda}^{\ast}) - L_{q} \Vert \boldsymbol{\lambda} - \boldsymbol{\lambda}^{\ast}\Vert^{2}$ for any $\boldsymbol{\lambda}\in \mathbb{R}^{m}_{+}$ and $\Vert \boldsymbol{\lambda} - \boldsymbol{\lambda}^{\ast}\Vert \leq D_q$.
\end{Lem}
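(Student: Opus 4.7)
The overall plan is to expand $q$ to second order at $\boldsymbol{\lambda}^{\ast}$ and show that the first-order and second-order terms jointly dominate a quadratic in $\|\boldsymbol{\lambda}-\boldsymbol{\lambda}^{\ast}\|$ uniformly over feasible perturbations $\boldsymbol{\lambda}\in\mathbb{R}^{m}_{+}$. Write $\mathbf{y}=\boldsymbol{\lambda}-\boldsymbol{\lambda}^{\ast}$, let $\bar{\mathcal{K}}=\{1,\ldots,m\}\setminus\mathcal{K}$, and let $\mathbf{y}_{\mathcal{K}},\mathbf{y}_{\bar{\mathcal{K}}}$ denote the corresponding subvectors, so that $\|\mathbf{y}\|^{2}=\|\mathbf{y}_{\mathcal{K}}\|^{2}+\|\mathbf{y}_{\bar{\mathcal{K}}}\|^{2}$.

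First I would extract a linear lower bound from Condition 1. For $k\notin\mathcal{K}$ we have $d_{k}<0$ and, by complementary slackness, $\lambda_{k}^{\ast}=0$, so $y_{k}=\lambda_{k}\ge 0$ for every $\boldsymbol{\lambda}\in\mathbb{R}^{m}_{+}$ and $d_{k}y_{k}\le -\delta|y_{k}|$ with $\delta=\min_{k\notin\mathcal{K}}|d_{k}|>0$. For $k\in\mathcal{K}$ the contribution vanishes since $d_{k}=0$. Hence $\mathbf{d}^{T}\mathbf{y}\le -\delta\|\mathbf{y}_{\bar{\mathcal{K}}}\|_{1}\le -\tilde{\delta}\|\mathbf{y}_{\bar{\mathcal{K}}}\|$ for some $\tilde{\delta}>0$.

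Next I would treat the quadratic part. Decompose $\mathbf{y}=\mathbf{U}\mathbf{v}+\mathbf{w}$ with $\mathbf{U}^{T}\mathbf{w}=\mathbf{0}$, giving $\tfrac{1}{2}\mathbf{y}^{T}\mathbf{U}\boldsymbol{\Sigma}\mathbf{U}^{T}\mathbf{y}=\tfrac{1}{2}\mathbf{v}^{T}\boldsymbol{\Sigma}\mathbf{v}\le -\tfrac{|\sigma_{\min}|}{2}\|\mathbf{v}\|^{2}$ by $\boldsymbol{\Sigma}\prec 0$. This provides quadratic decay only along directions in the range of $\mathbf{U}$; the null-space component $\mathbf{w}$ is controlled through the rank hypothesis. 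Partitioning $\mathbf{U}^{T}=[(\mathbf{U}')^{T},(\mathbf{U}'')^{T}]$ along $\mathcal{K}$ and $\bar{\mathcal{K}}$, the equality $\mathbf{U}^{T}\mathbf{w}=\mathbf{0}$ becomes $(\mathbf{U}')^{T}\mathbf{w}_{\mathcal{K}}=-(\mathbf{U}'')^{T}\mathbf{w}_{\bar{\mathcal{K}}}$. Because $\mathrm{rank}(\mathbf{U}')=l$, the map $(\mathbf{U}')^{T}$ is injective and admits the left inverse $\mathbf{M}=\bigl[\mathbf{U}'(\mathbf{U}')^{T}\bigr]^{-1}\mathbf{U}'$; applying $\mathbf{M}$ yields $\mathbf{w}_{\mathcal{K}}=-\mathbf{M}(\mathbf{U}'')^{T}\mathbf{w}_{\bar{\mathcal{K}}}$ and hence $\|\mathbf{w}_{\mathcal{K}}\|\le C_{1}\|\mathbf{w}_{\bar{\mathcal{K}}}\|$. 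Combining with $\|\mathbf{w}_{\bar{\mathcal{K}}}\|\le\|\mathbf{y}_{\bar{\mathcal{K}}}\|+\|\mathbf{U}''\|\,\|\mathbf{v}\|$ and $\|\mathbf{y}\|^{2}=\|\mathbf{v}\|^{2}+\|\mathbf{w}\|^{2}$, this produces an inequality of the form $\|\mathbf{y}\|^{2}\le C_{2}\bigl(\|\mathbf{v}\|^{2}+\|\mathbf{y}_{\bar{\mathcal{K}}}\|^{2}\bigr)$.

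To finish, for $\|\mathbf{y}\|\le D_{q}$ one has $\|\mathbf{y}_{\bar{\mathcal{K}}}\|\ge\|\mathbf{y}_{\bar{\mathcal{K}}}\|^{2}/D_{q}$, so the linear term $-\tilde{\delta}\|\mathbf{y}_{\bar{\mathcal{K}}}\|$ dominates a multiple of $\|\mathbf{y}_{\bar{\mathcal{K}}}\|^{2}$; together with $-\tfrac{|\sigma_{\min}|}{2}\|\mathbf{v}\|^{2}$ and the Taylor remainder $o(\|\mathbf{y}\|^{2})$ (absorbed by shrinking $D_{q}$), the second-order Taylor expansion yields $q(\boldsymbol{\lambda})-q(\boldsymbol{\lambda}^{\ast})\le -c\bigl(\|\mathbf{v}\|^{2}+\|\mathbf{y}_{\bar{\mathcal{K}}}\|^{2}\bigr)\le -L_{q}\|\mathbf{y}\|^{2}$ for appropriate $L_{q}>0$. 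I expect the rank-condition step to be the main obstacle: it is precisely what rules out the pathology of the NUM counter-example exhibited earlier in the paper, where a direction lies in the null space of the Hessian but receives no detection from the active-coordinate first-order term, so the argument must carefully route the null-space mass through the coordinates outside $\mathcal{K}$ where $\mathbf{d}$ is strictly negative.
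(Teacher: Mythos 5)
Your proof is correct and reaches the stated conclusion, but the key middle step is organized around a genuinely different decomposition from the paper's. The paper splits the displacement $\boldsymbol{\lambda}-\boldsymbol{\lambda}^{\ast}$ \emph{by coordinates} into $\boldsymbol{\mu}$ (supported on $\mathcal{K}$) and $\boldsymbol{\nu}$ (supported on the complement, necessarily nonnegative), uses $d_{k}\leq-\delta$ to control $\boldsymbol{\nu}$ through the linear term, and uses the rank condition only to conclude that the restricted Hessian $\mathbf{U}^{\prime}\boldsymbol{\Sigma}(\mathbf{U}^{\prime})^{T}$ is negative definite, which controls $\boldsymbol{\mu}$ through the quadratic term. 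You instead split the displacement into its components in the range of the Hessian and in its kernel, obtain quadratic decay on the range component directly from $\boldsymbol{\Sigma}\prec 0$, and use the rank condition in the form ``$(\mathbf{U}^{\prime})^{T}$ is injective'' to force any kernel mass to appear in the coordinates outside $\mathcal{K}$, where the strictly negative gradient detects it. Both routes exploit the same two sources of negativity, and both absorb the Taylor remainder by shrinking $D_q$; your version makes the role of the rank hypothesis (ruling out null directions of the Hessian that the first-order term cannot see, exactly the pathology in the NUM counterexample) more transparent, and it has no Hessian cross term between the two pieces, whereas the paper's coordinate split produces a cross term $2\boldsymbol{\mu}^{T}\mathbf{U}\boldsymbol{\Sigma}\mathbf{U}^{T}\boldsymbol{\nu}$ that its displayed expansion omits and that must in fact be absorbed into the linear term. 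One caution: your identities $\Vert\mathbf{y}\Vert^{2}=\Vert\mathbf{v}\Vert^{2}+\Vert\mathbf{w}\Vert^{2}$ and $\mathbf{y}^{T}\mathbf{U}\boldsymbol{\Sigma}\mathbf{U}^{T}\mathbf{y}=\mathbf{v}^{T}\boldsymbol{\Sigma}\mathbf{v}$ implicitly assume $\mathbf{U}$ has orthonormal columns, while the lemma is later invoked with $\mathbf{U}=\mathbf{A}$ non-orthonormal; this is repairable because the hypotheses ``$\boldsymbol{\Sigma}\prec 0$ and $\text{rank}(\mathbf{U}^{\prime})=l$'' are equivalent to the factorization-independent statement that $\nabla^{2}_{\boldsymbol{\lambda}}q(\boldsymbol{\lambda}^{\ast})$ is negative semidefinite with its $\mathcal{K}\times\mathcal{K}$ principal submatrix negative definite, so you may pass to an orthonormal factorization of the Hessian, but that reduction should be stated explicitly.
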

\begin{IEEEproof}
Without loss of generality, assume that $\mathcal{K} = \{1,\ldots, l\}$. Denote $\mathbf{U} = \left[\begin{array}{l} \mathbf{U}^{\prime} \\ \mathbf{U}^{\prime\prime}\end{array}\right]$ where $\mathbf{U}^{\prime\prime}$ is the $(m-l)\times n$ matrix composed by $(l+1)$-th to $m$-th rows of $\mathbf{U}$. Since $\mathbf{d}\leq \mathbf{0}$, we have $d_i <0$ for all $i\notin \mathcal{K}$ by definition of $\mathcal{K}$. Let $\delta = \min_{\{l+1\leq k \leq m\}} \{|d_{k}|\}$ such that $d_{k} \leq -\delta, \forall k\in\{l+1,\ldots,m\}$.   For each $\boldsymbol{\lambda}$, we define $\boldsymbol{\mu}$ via $\mu_{k} = \lambda_{k} - \lambda_{k}^{\ast}, \forall k\in\{1,\ldots,l\}, \mu_{k}=0, \forall k\in\{l+1,\ldots,m\}$ and $\boldsymbol{\nu}$ via $\nu_{k}=0, \forall k\in\{1,\ldots,l\}, \nu_{k} = \lambda_{k} - \lambda_{k}^{\ast}, \forall k\in\{l+1,\ldots,m\}$ such that $\boldsymbol{\lambda} - \boldsymbol{\lambda}^{\ast} = \boldsymbol{\mu} + \boldsymbol{\nu}$ and $\Vert  \boldsymbol{\lambda} - \boldsymbol{\lambda}^{\ast}\Vert^{2} = \Vert \boldsymbol{\mu}\Vert^{2} + 
\Vert \boldsymbol{\nu}\Vert^{2}$. Define $l$-dimension vector $\boldsymbol{\mu}^\prime = [\mu_1, \ldots, \mu_l]$. Note that $\Vert \boldsymbol{\mu}^\prime\Vert = \Vert \boldsymbol{\mu}\Vert$.  By the first condition, $d_{k}\neq 0, \forall k\in\{l+1,\ldots,m\}$ implies that $\lambda_{k}^{\ast}=0, \forall k\in\{l+1,\ldots,m\}$, which together with the fact that $\boldsymbol{\lambda}\geq \mathbf{0}$ implies that $\boldsymbol{\nu}\geq \mathbf{0}$. If $\Vert \boldsymbol{\lambda} - \boldsymbol{\lambda}^{\ast}\Vert$ is sufficiently small, we have
\begin{align*}
q(\boldsymbol{\lambda}) &\overset{(a)}{=}  q(\boldsymbol{\lambda}^\ast) +  (\boldsymbol{\lambda} - \boldsymbol{\lambda}^\ast)^T \nabla_{\boldsymbol{\lambda}} q(\boldsymbol{\lambda}^\ast) + (\boldsymbol{\lambda} - \boldsymbol{\lambda}^\ast)^T \nabla^2_{\boldsymbol{\lambda}} q(\boldsymbol{\lambda}^\ast) (\boldsymbol{\lambda} - \boldsymbol{\lambda}^\ast) +o (\Vert \boldsymbol{\lambda} - \boldsymbol{\lambda}^\ast \Vert^2) \\
&= q(\boldsymbol{\lambda}^\ast) +  \sum_{k=1}^{l} \mu_{k} d_{k} + \sum_{k=l+1}^{m} \nu_{k} d_{k} + \boldsymbol{\mu}^{T} \mathbf{U}\boldsymbol{\Sigma} \mathbf{U}^{T} \boldsymbol{\mu}^{T}  + \boldsymbol{\nu}^{T} \mathbf{U}\boldsymbol{\Sigma} \mathbf{U}^{T} \boldsymbol{\nu}^{T} +o (\Vert \boldsymbol{\lambda} - \boldsymbol{\lambda}^\ast \Vert^2)\\
&\overset{(b)}{\leq}  q(\boldsymbol{\lambda}^\ast) -  \sum_{k=l+1}^{m} \nu_{k} \delta + \boldsymbol{\mu}^{\prime,T} \mathbf{U}^{\prime}\boldsymbol{\Sigma} \mathbf{U}^{\prime,T} \boldsymbol{\mu}^{\prime, T} +o (\Vert \boldsymbol{\lambda} - \boldsymbol{\lambda}^\ast \Vert^2) \\
&\overset{(c)}{\leq}  q(\boldsymbol{\lambda}^\ast) -  \sum_{k=l+1}^{m} \nu_{k} \delta - \kappa \Vert \boldsymbol{\mu}^\prime\Vert^{2} + o (\Vert \boldsymbol{\lambda} - \boldsymbol{\lambda}^\ast \Vert^2)\\
&\overset{(d)}{<} q(\boldsymbol{\lambda}^\ast) -   \kappa \Vert \boldsymbol{\nu}\Vert^{2} - \kappa \Vert \boldsymbol{\mu}\Vert^{2} + o (\Vert \boldsymbol{\lambda} - \boldsymbol{\lambda}^\ast \Vert^2)\\
&= q(\boldsymbol{\lambda}^\ast) -   \kappa \Vert\boldsymbol{\lambda} - \boldsymbol{\lambda}^\ast\Vert^{2}  + o (\Vert \boldsymbol{\lambda} - \boldsymbol{\lambda}^\ast \Vert^2)
\end{align*} 
where (a) follows from the second-order Taylor's expansion; (b) follows from the facts that $d_{k}=0, \forall k\in\{1,\ldots,l\}$; $\boldsymbol{\nu}\geq 0$ and $d_{k} \leq -\delta$; the last $m-l$ elements of vector $\boldsymbol{\mu}$ are zeros; and $\boldsymbol{\Sigma}\prec 0$; (c) is true because $\kappa > 0$ exists when $\text{rank}(\mathbf{U}^{\prime}) =l$ and $\boldsymbol{\Sigma}\prec 0$; and  (d) follows from $-\delta \leq -\kappa \nu_{k}, \forall k\in\{l+1,\ldots,m\}$, which is true as long as $\Vert \boldsymbol{\nu}\Vert$ is sufficiently small; and $\Vert \boldsymbol{\mu}^\prime\Vert = \Vert \boldsymbol{\mu}\Vert$.

By the definition of $o (\Vert \boldsymbol{\lambda} - \boldsymbol{\lambda}^\ast \Vert^2)$, for any $\kappa>0$, we have $o (\Vert \boldsymbol{\lambda} - \boldsymbol{\lambda}^\ast \Vert^2) \leq \frac{\kappa}{2} \Vert \boldsymbol{\lambda} - \boldsymbol{\lambda}^\ast \Vert^2$ as long as $\Vert \boldsymbol{\lambda} - \boldsymbol{\lambda}^\ast \Vert$ is sufficiently small. Thus, there exists $D_q>0$ such that  
\begin{align*}
q(\boldsymbol{\lambda}) \leq q(\boldsymbol{\lambda}^\ast) - L_q\Vert \boldsymbol{\lambda} - \boldsymbol{\lambda}^\ast \Vert^2, \forall \boldsymbol{\lambda}\in \{\boldsymbol{\lambda}\in \mathbb{R}^{m}_{+}: \Vert \boldsymbol{\lambda} - \boldsymbol{\lambda}^{\ast}\Vert \leq D_q\}
\end{align*}
where $L_{s} = \kappa/2$.
\end{IEEEproof}

\begin{Lem}\label{lm:negative-definite-hessian-imply-strongly-concave}
Let $q(\boldsymbol{\lambda}): \mathbb{R}^{m}_{+}\rightarrow \mathbb{R}$ be a second-order continuously differentiable concave function and $q(\boldsymbol{\lambda})$ be maximized at $\boldsymbol{\lambda} = \boldsymbol{\lambda}^{\ast} \geq \mathbf{0}$.  If $\nabla_{\boldsymbol{\lambda}}^2 q(\boldsymbol{\lambda}^{\ast})\prec 0$, then there exist $D_c>0$ and $L_{c}>0$ such that $q(\cdot)$ is strongly concave on the set $\boldsymbol{\lambda}\in \{\boldsymbol{\lambda}\in \mathbb{R}^{m}_{+}: \Vert \boldsymbol{\lambda} - \boldsymbol{\lambda}^{\ast}\Vert \leq D_c\}$
\end{Lem}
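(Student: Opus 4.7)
The plan is to leverage the continuity of the Hessian, which is available since $q$ is assumed to be second-order continuously differentiable. I would first use the negative definiteness of $\nabla_{\boldsymbol{\lambda}}^2 q(\boldsymbol{\lambda}^\ast)$ to extract a quantitative margin: let $-2L_c$ be the largest eigenvalue of $\nabla_{\boldsymbol{\lambda}}^2 q(\boldsymbol{\lambda}^\ast)$, so that $L_c > 0$ and $\nabla_{\boldsymbol{\lambda}}^2 q(\boldsymbol{\lambda}^\ast) \preceq -2L_c \mathbf{I}_m$. Because each entry of $\nabla_{\boldsymbol{\lambda}}^2 q(\cdot)$ is continuous in $\boldsymbol{\lambda}$, the maximum eigenvalue function $\lambda_{\max}(\nabla_{\boldsymbol{\lambda}}^2 q(\cdot))$ is continuous as well, so there exists $D_c > 0$ such that $\nabla_{\boldsymbol{\lambda}}^2 q(\boldsymbol{\lambda}) \preceq -L_c \mathbf{I}_m$ for every $\boldsymbol{\lambda}$ in the set $\mathcal{B} = \{\boldsymbol{\lambda} \in \mathbb{R}^m_+ : \|\boldsymbol{\lambda} - \boldsymbol{\lambda}^\ast\| \leq D_c\}$.

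Next I would translate this uniform Hessian bound into strong concavity. Define $\tilde{q}(\boldsymbol{\lambda}) = -q(\boldsymbol{\lambda})$; on $\mathcal{B}$ we have $\nabla_{\boldsymbol{\lambda}}^2 \tilde{q}(\boldsymbol{\lambda}) \succeq L_c \mathbf{I}_m$, hence $\nabla_{\boldsymbol{\lambda}}^2 \bigl(\tilde{q}(\boldsymbol{\lambda}) - \tfrac{L_c}{2}\|\boldsymbol{\lambda}\|^2\bigr) \succeq 0$ on $\mathcal{B}$. Since $\mathcal{B}$ is convex (as the intersection of the two convex sets $\mathbb{R}^m_+$ and the closed Euclidean ball of radius $D_c$ centered at $\boldsymbol{\lambda}^\ast$), a standard second-order characterization of convexity yields that $\tilde{q}(\boldsymbol{\lambda}) - \tfrac{L_c}{2}\|\boldsymbol{\lambda}\|^2$ is convex on $\mathcal{B}$. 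By the definition of strongly convex functions given earlier in the paper, this is exactly the statement that $\tilde{q}$ is strongly convex on $\mathcal{B}$ with modulus $L_c$, i.e., $q$ is strongly concave on $\mathcal{B}$ with modulus $L_c$, which is the desired conclusion.

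The bookkeeping step that is easiest to mishandle is the second-order characterization of convexity restricted to a proper convex subset rather than an open set; I would justify it by taking any two points $\boldsymbol{\lambda}_1, \boldsymbol{\lambda}_2 \in \mathcal{B}$, noting that the whole segment $[\boldsymbol{\lambda}_1, \boldsymbol{\lambda}_2]$ lies in $\mathcal{B}$ by convexity of $\mathcal{B}$, and applying the integral form of Taylor's theorem along this segment: $\tilde{q}(\boldsymbol{\lambda}_2) - \tilde{q}(\boldsymbol{\lambda}_1) - \nabla \tilde{q}(\boldsymbol{\lambda}_1)^T(\boldsymbol{\lambda}_2 - \boldsymbol{\lambda}_1) = \int_0^1 (1-s)\, (\boldsymbol{\lambda}_2 - \boldsymbol{\lambda}_1)^T \nabla^2 \tilde{q}(\boldsymbol{\lambda}_1 + s(\boldsymbol{\lambda}_2 - \boldsymbol{\lambda}_1))(\boldsymbol{\lambda}_2 - \boldsymbol{\lambda}_1)\, ds \geq \tfrac{L_c}{2}\|\boldsymbol{\lambda}_2 - \boldsymbol{\lambda}_1\|^2$, which gives the strong convexity inequality directly and obviates any appeal to Hessian positivity on an open neighborhood.

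I do not expect any serious obstacle here; the only subtle point is the convexity of $\mathcal{B}$ (so that line segments stay inside the region where the Hessian bound holds) and the fact that continuous perturbation of a negative definite matrix remains negative definite with a uniform margin, both of which are immediate.
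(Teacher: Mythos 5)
Your proposal is correct and takes essentially the same approach as the paper, whose entire proof is the one-line remark that the lemma ``trivially follows from the continuity of $\nabla_{\boldsymbol{\lambda}}^2 q(\boldsymbol{\lambda})$''; you have simply filled in the standard details (uniform eigenvalue margin on a neighborhood, convexity of the intersection set, and the integral form of Taylor's theorem along segments).
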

\begin{IEEEproof}
This lemma trivially follows from the continuity of $\nabla_{\boldsymbol{\lambda}}^2 q(\boldsymbol{\lambda})$.
\end{IEEEproof}

{\it Proof of part (1) of Theorem \ref{thm:strongly-concave-dual-fucntion}}: 

Note that Assumption \ref{as:strongly-convex-problem} is trivially true.  Assumption \ref{as:strong-duality} follows from the assumption\footnote{The assumption that $\text{rank}(\nabla_{\mathbf{x}} \mathbf{g}_{\mathcal{K}}(\mathbf{x}^\ast)^T) = l$ is known as the non-degenerate constraint qualification or linear independence constraint qualification, which along with the famous Slater's constraint qualification, is one of various constraint qualifications implying the strong duality \cite{book_NonlinearProgrammingTA,book_MathematicsEconomists}.} that $\text{rank}(\nabla \mathbf{g}_{\mathcal{K}}(\mathbf{x}^\ast)^T) = l$.  To show that Assumption \ref{as:dual-locally-quadratic} holds, we need to apply Lemma \ref{lm:sufficient-condition-locally-quadratic}.

By the strong convexity of $f(\mathbf{x})$ and Proposition B.25 in \cite{book_NonlinearProgramming_Bertsekas}, the dual function $q(\boldsymbol{\lambda})$ is differentiable and has gradient $\nabla_{\boldsymbol{\lambda}} q(\boldsymbol{\lambda}^{\ast}) = \mathbf{g}(\mathbf{x}^{\ast})$. Thus, $\mathbf{d} = \nabla_{\boldsymbol{\lambda}} q(\boldsymbol{\lambda}^{\ast}) \leq 0$. By Assumption \ref{as:strong-duality}, i.e., the strong duality, we have $\lambda_{k}^{\ast} d_{k} = 0, \forall k\in\{1,\ldots,m\}$. Thus, the first condition in Lemma \ref{lm:sufficient-condition-locally-quadratic} is satisfied.

For $\boldsymbol{\lambda}\geq 0$, define $\mathbf{x}^\ast (\boldsymbol{\lambda}) = \argmin_{\mathbf{x}\in \mathbb{R}^n} \big[f(\mathbf{x}) + \boldsymbol{\lambda}^T \mathbf{g}(\mathbf{x})\big]$ and note $\mathbf{x}^\ast = \mathbf{x}^\ast(\boldsymbol{\lambda}^\ast) $. Note that $\mathbf{x}^\ast (\boldsymbol{\lambda})$ is a well-defined function because $f(\mathbf{x}) + \boldsymbol{\lambda}^T \mathbf{g}(\mathbf{x})$ is strongly convex and hence is minimized at a unique point.  By equation $(6.9)$, page 598, in \cite{book_NonlinearProgramming_Bertsekas}, we have
\begin{align}
\nabla^2_{\boldsymbol{\lambda}} q(\boldsymbol{\lambda}^\ast) = &- \Big[\nabla_{\mathbf{x}} \mathbf{g}(\mathbf{x}^\ast)\Big]^T \Big[\nabla_{\mathbf{x}}^2 f(\mathbf{x}^\ast) + \sum_{k=1}^m \lambda_k^\ast \nabla_{\mathbf{x}}^2 g_k(\mathbf{x}^\ast) \Big]^{-1} \Big[\nabla_{\mathbf{x}} \mathbf{g}(\mathbf{x}^\ast)\Big] \label{eq:hession-of-dual-function}
\end{align} 

Note that $\nabla_{\mathbf{x}}^2 f(\mathbf{x}^\ast) + \sum_{k=1}^m \lambda_k^\ast \nabla_{\mathbf{x}}^2 g_k(\mathbf{x}^\ast) \succ 0$ because $f$ is strongly convex and $g_k, k\in\{1,\ldots, m\}$ are convex. Thus, if $\text{rank}(\nabla_{\mathbf{x}} \mathbf{g}_{\mathcal{K}}(\mathbf{x}^\ast)^{T}) = |\mathcal{K}|$, then the second condition of Lemma \ref{lm:sufficient-condition-locally-quadratic} is satisfied.

{\it Proof of part (2) of Theorem \ref{thm:strongly-concave-dual-fucntion}}: 
Using the same argument, we can show that Assumptions \ref{as:strongly-convex-problem}-\ref{as:strong-duality} hold.  By equation \eqref{eq:hession-of-dual-function} and the assumption that $\text{rank}(\nabla_{\mathbf{x}} \mathbf{g}(\mathbf{x}^\ast)^{T})=m$, Assumption \ref{as:dual-locally-stronlgy-concave} follows from Lemma \ref{lm:negative-definite-hessian-imply-strongly-concave}.

\section{Proof of Theorem \ref{thm:NUM-locally-quadratic}} \label{app:NUM-locally-quadratic}
\begin{itemize}
\item Proof of Part (1):  Let $\mathbf{x}^{\ast}$ be the optimal solution to problem \eqref{eq:NUM-reduced-object}-\eqref{eq:NUM-reduced-rate-constraint}.  Since each column of $\mathbf{A}$ has at least one non-zero entry, we have $x_{i}^{\ast} \leq b^{\max} , \forall i\in\{1,2,\ldots, n\}$ with $b^{\max} = \max_{1\leq i\leq n} b_{i}$. Thus, problem  \eqref{eq:NUM-compact-object}-\eqref{eq:NUM-compact-rate-constraint} is equivalent to problem \eqref{eq:NUM-reduced-object}-\eqref{eq:NUM-reduced-rate-constraint} since only a redundant constraint $\mathbf{x}\leq \mathbf{x}^{\max}$ is introduced.

\item Proof of Part (2):  
\begin{itemize}
\item To show Assumption \ref{as:strongly-convex-problem} holds:  It follows from the strong convexity of $\sum_{i=1}^{n} - w_{i}\log(x_{i})$ over set $\mathcal{X}= \{\mathbf{0} \leq \mathbf{x} \leq \mathbf{x}^{\max}\}$.
\item To show Assumption \ref{as:strong-duality} holds:  It follows from the assumption that $\mathbf{A}\mathbf{x}^\ast \leq \mathbf{b}$, which is Slater's condition for convex programs only with linear inequality constraints. (Or alternatively, Assumption \ref{as:strong-duality} is also implied by the assumption that $\text{rank}(\mathbf{A}^\prime) = m^\prime$, which is the linear independence constraint qualification for convex programs \cite{book_NonlinearProgrammingTA,book_MathematicsEconomists}.) Note that  we can prove that Assumption \ref{as:strong-duality} also holds for problem \eqref{eq:NUM-reduced-object}-\eqref{eq:NUM-reduced-rate-constraint} with a similar argument. 
\item To show Assumption \ref{as:dual-locally-quadratic} holds:  Define the dual function of problem \eqref{eq:NUM-reduced-object}-\eqref{eq:NUM-reduced-rate-constraint} as
\begin{align*}
\tilde{q}(\boldsymbol{\lambda}) = \min_{\mathbf{x}\geq \mathbf{0}}\Big\{ \sum_{i=1}^{n} -w_{i}\log(x_{i}) + \boldsymbol{\lambda}^{T}(\mathbf{A}\mathbf{x} - \mathbf{b}) \Big\}.
\end{align*}
Since Assumption \ref{as:strong-duality} holds for problem \eqref{eq:NUM-reduced-object}-\eqref{eq:NUM-reduced-rate-constraint}, we assume $(\mathbf{x}^{\ast}, \boldsymbol{\lambda}^\ast)$ be a primal-dual pair that attains the strong duality of problem \eqref{eq:NUM-reduced-object}-\eqref{eq:NUM-reduced-rate-constraint}. By the strong duality, $\mathbf{x}^{\ast} = \argmin_{\mathbf{x}\geq \mathbf{0}}\big[ \sum_{i=1}^{n} -w_{i}\log(x_{i}) + (\boldsymbol{\lambda}^{\ast})^{T}(\mathbf{A}\mathbf{x} - \mathbf{b}) \big]$, i.e, $x_{i}^{\ast} = \argmin_{x_{i}\geq 0} \big[ - w_{i}\log(x_{i}) + (\boldsymbol{\lambda}^{\ast})^{T} \mathbf{a}_{i} x_{i}\big], \forall 1\leq i\leq n$.  Thus, we have $x_{i}^{\ast} = \big[ \frac{w_{i}}{ (\boldsymbol{\lambda}^{\ast})^{T} \mathbf{a}_{i}}\big]_{0}^{\infty}, \forall 1\leq i\leq n$. In the proof of part (1), we show that $0 < x_{i}^{\ast} \leq b^{\max}, \forall 1\leq i\leq n$. (Note that  $x_{i}^{\ast} >0, \forall 1\leq i\leq n$ because the domain of $\log$ function is $(0, +\infty)$.) Thus, $0 < \frac{w_{i}}{(\boldsymbol{\lambda}^{\ast})^{T}\mathbf{a}_{i}} \leq b^{\max} < x_{i}^{\max}, \forall 1\leq i\leq n$ and $x_{i}^{\ast} = \big[ \frac{w_{i}}{ (\boldsymbol{\lambda}^{\ast})^{T} \mathbf{a}_{i}}\big]_{0}^{\infty} =  \frac{w_{i}}{(\boldsymbol{\lambda}^{\ast})^{T}\mathbf{a}_{i}}, \forall 1\leq i\leq n$.

Now consider the equivalent problem \eqref{eq:NUM-compact-object}-\eqref{eq:NUM-compact-rate-constraint}, whose Lagrangian dual function is given by $$q(\boldsymbol{\lambda}) = \min_{ \mathbf{0}\leq \mathbf{x} \leq \mathbf{x}^{\max}}\big\{\sum_{i=1}^{n} -w_{i}\log(x_{i}) + \boldsymbol{\lambda}^{T}(\mathbf{A}\mathbf{x} - \mathbf{b})\big\}.$$ Note that  $\mathbf{x}^{\ast}$ is still the optimal solution to problem \eqref{eq:NUM-compact-object}-\eqref{eq:NUM-compact-rate-constraint}. Note that $\argmin_{ \mathbf{0}\leq \mathbf{x} \leq \mathbf{x}^{\max}} \big[ \sum_{i=1}^{n} -w_{i}\log(x_{i}) + \boldsymbol{\lambda}^{T}(\mathbf{A}\mathbf{x} - \mathbf{b})\big] = \big[ \big[\frac{w_{1}}{\boldsymbol{\lambda}^{T}\mathbf{a}_{1}}\big]_{0}^{x_{1}^{\max}}, \ldots, \big[\frac{w_{n}}{\boldsymbol{\lambda}^{T}\mathbf{a}_{n}}\big]_{0}^{x_{n}^{\max}}\big]^{T}$.  By the fact that $0< \frac{w_{i}}{(\boldsymbol{\lambda}^{\ast})^{T}\mathbf{a}_{i}} < x_{i}^{\max}, \forall i\in\{1,2,\ldots, n\}$, we know $ \big[ \big[\frac{w_{1}}{(\boldsymbol{\lambda}^\ast)^{T}\mathbf{a}_{1}}\big]_{0}^{x_{1}^{\max}}, \ldots, \big[\frac{w_{n}}{(\boldsymbol{\lambda}^\ast)^{T}\mathbf{a}_{n}}\big]_{0}^{x_{n}^{\max}}\big]^{T}  =  \big[ \big[\frac{w_{1}}{(\boldsymbol{\lambda}^\ast)^{T}\mathbf{a}_{1}}\big]_{0}^{\infty}, \ldots, \big[\frac{w_{n}}{(\boldsymbol{\lambda}^\ast)^{T}\mathbf{a}_{n}}\big]_{0}^{\infty}\big]^{T}$.  Thus, $q(\boldsymbol{\lambda}^\ast) = \tilde{q}(\boldsymbol{\lambda}^\ast)$ and $\boldsymbol{\lambda}^\ast$ also attains the strong duality for the equivalent problem \eqref{eq:NUM-compact-object}-\eqref{eq:NUM-compact-rate-constraint}.  By the continuity of functions $\frac{w_{i}}{\boldsymbol{\lambda}^{T} \mathbf{a}_{i}}$ and recall that $0 < \frac{w_{i}}{(\boldsymbol{\lambda}^{\ast})^{T}\mathbf{a}_{i}} < x_{i}^{\max}, \forall 1\leq i\leq n$ , we know $ \big[ \big[\frac{w_{1}}{\boldsymbol{\lambda}^{T}\mathbf{a}_{1}}\big]_{0}^{x_{1}^{\max}}, \ldots, \big[\frac{w_{n}}{\boldsymbol{\lambda}^{T}\mathbf{a}_{n}}\big]_{0}^{x_{n}^{\max}}\big]^{T}  =  \big[ \big[\frac{w_{1}}{\boldsymbol{\lambda}^{T}\mathbf{a}_{1}}\big]_{0}^{\infty}, \ldots, \big[\frac{w_{n}}{\boldsymbol{\lambda}^{T}\mathbf{a}_{n}}\big]_{0}^{\infty}\big]^{T} = \big[ \frac{w_{1}}{\boldsymbol{\lambda}^{T}\mathbf{a}_{1}}, \ldots, \frac{w_{n}}{\boldsymbol{\lambda}^{T}\mathbf{a}_{n}} \big]^{T}$ when $\boldsymbol{\lambda}$ is sufficiently close to $\boldsymbol{\lambda}^{\ast}$.

Next, we show that the dual function $q(\boldsymbol{\lambda})$ is locally quadratic in a neighborhood of $\boldsymbol{\lambda}^{\ast}$ by using Lemma \ref{lm:sufficient-condition-locally-quadratic}. Consider $\boldsymbol{\lambda} \in \mathbb{R}^{m}_{+}$ such that $\Vert \boldsymbol{\lambda} - \boldsymbol{\lambda}^\ast \Vert $ is sufficiently small, or equivalently,  $\boldsymbol{\lambda}$ is sufficiently close to $\boldsymbol{\lambda}^{\ast}$.  For such $\boldsymbol{\lambda}$, we have
\begin{align*}
\argmin_{\mathbf{0} \leq \mathbf{x}\leq \mathbf{x}^{\max}} \big[ \sum_{i=1}^{n} -w_{i}\log(x_{i}) + \boldsymbol{\lambda}^{T}(\mathbf{A}\mathbf{x} - \mathbf{b}) \big] = \big[ \big[\frac{w_{1}}{\boldsymbol{\lambda}^{T}\mathbf{a}_{1}}\big]_{0}^{x_{1}^{\max}}, \ldots, \big[\frac{w_{n}}{\boldsymbol{\lambda}^{T}\mathbf{a}_{n}}\big]_{0}^{x_{n}^{\max}}\big]^{T}=  \big[ \frac{w_{i}}{\boldsymbol{\lambda}^{T}\mathbf{a}_{1}}, \ldots, \frac{w_{n}}{\boldsymbol{\lambda}^{T}\mathbf{a}_{n}} \big]^{T}
\end{align*} 
Thus,
\begin{align*}
q(\boldsymbol{\lambda}) &= \sum_{i=1}^{n}  \Big[- w_{i}\log\big(\frac{1}{\boldsymbol{\lambda}^{T} \mathbf{a}_{i}}\big) + \frac{w_{i}\boldsymbol{\lambda}^{T} \mathbf{a}_{i}}{\boldsymbol{\lambda}^{T} \mathbf{a}_{i}}\Big] - \boldsymbol{\lambda}^{T} \mathbf{b} \\
 &= \sum_{i=1}^{n}  \Big[- w_{i}\log\big(\frac{1}{\boldsymbol{\lambda}^{T} \mathbf{a}_{i}}\big) \Big] + \sum_{i=1}^{n} w_{i} - \boldsymbol{\lambda}^{T} \mathbf{b}
\end{align*}
for $\boldsymbol{\lambda} \in \mathbb{R}^{m}_{+}$ such that $\Vert \boldsymbol{\lambda} - \boldsymbol{\lambda}^\ast \Vert $ is sufficiently small. Note that $q(\boldsymbol{\lambda})$ given above is infinitely differentiable. Taking the first-order and second-order derivatives at $\boldsymbol{\lambda} = \boldsymbol{\lambda}^{\ast}$ yields
\begin{align}
\nabla_{\boldsymbol{\lambda}} q(\boldsymbol{\lambda}^{\ast}) &= \sum_{i=1}^{n} \frac{w_{i}\mathbf{a}_{i}}{(\boldsymbol{\lambda}^{\ast})^{T}\mathbf{a}_{i}} - \mathbf{b}, \label{eq:NUM-dual-function-gradient}
\end{align}
\begin{align}
\nabla_{\boldsymbol{\lambda}}^{2} q(\boldsymbol{\lambda}^{\ast}) = &- \sum_{i=1}^{n} \frac{w_{i}\mathbf{a}_{i} \mathbf{a}_{i}^{T}}{((\boldsymbol{\lambda}^{\ast})^{T}\mathbf{a}_{i})^{2}}  \nonumber \\
= & \mathbf{A} \text{diag}\big([-\frac{w_{1}}{((\boldsymbol{\lambda}^{\ast})^{T}\mathbf{a}_{i})^{2}}, \ldots, -\frac{w_{n}}{((\boldsymbol{\lambda}^{\ast})^{T}\mathbf{a}_{n})^{2}}]\big)\mathbf{A}^{T},  \label{eq:NUM-dual-function-hessian}
\end{align}
where $ \text{diag}\big([-\frac{w_{1}}{((\boldsymbol{\lambda}^{\ast})^{T}\mathbf{a}_{i})^{2}}, \ldots, -\frac{w_{n}}{((\boldsymbol{\lambda}^{\ast})^{T}\mathbf{a}_{n})^{2}}]\big)$ denotes the diagonal matrix with diagonal entries given by 
\begin{align*}
-\frac{w_{1}}{((\boldsymbol{\lambda}^{\ast})^{T}\mathbf{a}_{i})^{2}}, \ldots, -\frac{w_{n}}{((\boldsymbol{\lambda}^{\ast})^{T}\mathbf{a}_{n})^{2}}.
\end{align*}
Note that $\frac{w_{i}}{(\boldsymbol{\lambda}^{\ast})^{T}\mathbf{a}_{i}} > 0, \forall 1\leq i\leq n$. Thus, if $\text{rank}(\mathbf{A}^{\prime}) = m^\prime$, then Assumption \ref{as:dual-locally-quadratic} holds by Lemma \ref{lm:sufficient-condition-locally-quadratic}.
\end{itemize}

\item Proof of Part (3): Using the same arguments in the proof of part (2), we can show that Assumptions \ref{as:strongly-convex-problem}-\ref{as:strong-duality} hold.  By equation \eqref{eq:NUM-dual-function-hessian} and the fact that $\text{rank}(\mathbf{A}) = m$, Assumption \ref{as:dual-locally-stronlgy-concave} follows from Lemma \ref{lm:negative-definite-hessian-imply-strongly-concave}.
\end{itemize}

\bibliographystyle{IEEEtran}
\bibliography{IEEEfull,mybibfile}

\end{document}